\newcommand\m[1]{\begin{pmatrix}#1\end{pmatrix}} 
\newcommand{\Mod}[1]{\ (\mathrm{mod}\ #1)}
\DeclareSymbolFont{cyrletters}{OT2}{wncyr}{m}{n}
\DeclareMathSymbol{\Sha}{\mathalpha}{cyrletters}{"58}
\newcolumntype{P}[1]{>{\centering\arraybackslash}p{#1}}
\newtheorem{theorem}{Theorem}[section]
\theoremstyle{definition}
\newtheorem{lemma}{Lemma}[section]
\theoremstyle{definition}
\newtheorem{defn}{Definition}[section]
\theoremstyle{definition}
\newtheorem{proposition}{Proposition}[section]
\newtheorem{remark}{Remark}
\newtheorem*{notation}{Notation}
\definecolor{lightgray}{gray}{0.5}
\newenvironment{mythm}[1]
  {\innercustomthm}
  {\endinnercustomthm}
\begin{document}

% \begin{figure}[!tbp]
%   \centering
%   \begin{minipage}[b]{0.19\textwidth}
%     \includegraphics[width=\textwidth]{Heilbronn-Landscape-Logo_100.png}
%   \end{minipage}
%   \hfill
%   \begin{minipage}[b]{0.19\textwidth}
%     \includegraphics[width=\textwidth]{UKRI_logo.png}
%   \end{minipage}
% \end{figure}
\title{A Fourier--Jacobi Dirichlet series for cusp forms on orthogonal groups}
%A Dirichlet Series involving Fourier--Jacobi coefficients of cusp forms on $SO(2,n+2)$
% \begin{center}\textbf{\LARGE{Dirichlet Series for $SO(2,n+2)$ and the Standard $L-$function}}
% \end{center}
\vspace{-0.5cm}
\author{Rafail Psyroukis}
\address{Department of Mathematical Sciences\\ Durham University\\
South. Rd.\\ Durham, DH1 3LE, U.K..}
\email{rafail.psyroukis@durham.ac.uk}
%\subjclass[2020]{, 11F67, 11G18, 11M41}
\maketitle
\vspace{-1cm}
\begin{abstract}
    We investigate a Dirichlet series involving the Fourier--Jacobi coefficients of two cusp forms $F,G$ for orthogonal groups of signature $(2,n+2)$. In the case when $F$ is a Hecke eigenform and $G$ is a Maass lift of a Poincar\'e series, we establish a connection with the standard $L$-function attached to $F$. What is more, we find explicit choices of orthogonal groups, for which we obtain a clear-cut Euler product expression for this Dirichlet series. Through our considerations, we recover a classical result for Siegel modular forms, first introduced by Kohnen and Skoruppa, but also provide a range of new examples, which can be related to other kinds of modular forms, such as paramodular, Hermitian, and quaternionic.
\end{abstract}
% \tableofcontents
\section{Introduction}
Kohnen and Skoruppa in \cite{kohnen_skoruppa} considered the Dirichlet series
\begin{equation*}
    \mathcal{D}_{F,G}(s) = \zeta(2s-2k+4)\sum_{N=1}^{\infty} \langle \phi_N, \psi_N \rangle N^{-s},
\end{equation*}
where $\phi_N, \psi_N$ are the $N$th Fourier-Jacobi coefficients of two genus two Siegel cusp forms $F,G$ of integral weight $k\geq0$ and $\langle \;, \;\rangle$ denotes the Petersson inner product on the space of Jacobi forms of index $N$ and weight $k$. They showed that this Dirichlet series has an analytic continuation to $\mathbb{C}$ and satisfies a functional equation. Moreover, in the case when $F$ is a Hecke eigenform and $G$ is in the Maass space, they deduced that
\begin{equation*}
    \mathcal{D}_{F,G}(s) = \langle \phi_1, \psi_1\rangle Z_{F}(s),
\end{equation*}
where $Z_{F}(s)$ is the spinor $L$-function attached to $F$. In the case when $\phi_1$ does not vanish identically, this gives a way of proving the meromorphic continuation of $Z_{F}(s)$ to $\mathbb{C}$. Recently, Manickam in \cite{manickam} showed that the first Fourier-Jacobi coefficient of a degree $2$ cuspidal Siegel eigenform does not vanish identically, hence this is actually not a restriction.\\\\
It was not too long after the work of Kohnen and Skoruppa that Gritsenko in \cite{gritsenko} considered a same type Dirichlet series, however this time involving the Fourier--Jacobi coefficients of two Hermitian cusp forms of degree $2$. Gritsenko managed to prove that $\mathcal{D}_{F,G}(s)$ has an analytic continuation to $\mathbb{C}$ and that if $F$ is Hecke eigenform and $G$ is in the corresponding Maass space, $\mathcal{D}_{F,G}(s)$ is related to a degree $6$ $L$-function attached to $F$ (\cite[(5.3)]{gritsenko}). This $L$-function was first introduced by Gritsenko in \cite{gritsenko2}, in his study of a Dirichlet series involving the Fourier coefficients of a Hermitian cuspidal eigenform of degree $2$. It is related to the exterior square representation of $\textup{GL}_4$ (see \cite[Section 3.2.2]{pollack_shah_2} for example).\\\\
Even though the results in \cite{kohnen_skoruppa} and \cite{gritsenko} are very similar in nature, the methods used by the authors are quite different. In particular, in \cite{kohnen_skoruppa}, the authors prove their main result using a fundamental identity proved by Andrianov in \cite{andrianov}. This gives an Euler product expression for a certain Dirichlet series involving the Fourier coefficients of a Siegel cuspidal eigenform of degree $2$. In \cite{gritsenko} on the other hand, Gritsenko manages to factorise the polynomials defining the $p$-factors of the degree $6$ $L$-function attached to a Hermitian cuspidal eigenform of degree $2$. This factorisation happens in some parabolic Hecke rings which contain elements acting on the Fourier--Jacobi coefficients.\\\\
In this paper, we consider a Dirichlet series of the same type, attached to two cusp forms $F$ and $G$ for orthogonal groups of real signature $(2,n+2), \ n \geq 1$. We take $F$ to be a Hecke eigenform for the corresponding Hecke algebra and $G$ to be a Maass lift of a Poincar\'e series. The Maass space in this setting has been defined by Gritsenko in \cite{gritsenko_jacobi}, Sugano in \cite{sugano_lifting} and Krieg in \cite{krieg_jacobi}. Our main goal is to show how the method of Kohnen and Skoruppa in \cite{kohnen_skoruppa} can be extended in this setting and investigate the connection of this Dirichlet series with the standard $L$-function attached to $F$. The question of its analytic continuation and functional equation is quite different in nature and is the subject matter of our paper \cite{orthogonal_analytic}.\\\\
The main reason we expect such a connection is the accidental isogenies of orthogonal groups of signature $(2,n+2)$ with classical groups for small $n$. For example, for $n=1$, there is an isogeny with symplectic groups of degree $2$, for $n=2$, an isogeny with unitary groups of degree $2$, for $n=4$ with quaternionic groups of degree $2$ and for $n=6$ with the linear groups of Cayley numbers of degree $2$ (see for example \cite[page 5]{eisenstein_thesis}). Therefore, already the results of Kohnen and Skoruppa in \cite{kohnen_skoruppa} and Gritsenko in \cite{gritsenko} indicate a connection in the $n=1,2$ case. We note here that the isogeny between $\textup{SO}(2,3)$ and $\textup{Sp}_2$ in particular was recently exploited by Pollack and Shah in \cite{pollack_shah} in order to extend the integral of Kohnen and Skoruppa to apply for arbitrary cuspidal automorphic representations of $\textup{PGSp}_2$. \\

% The reason to expect such a connection is the accidental isogenies of orthogonal groups of signature $(2,n+2)$ with classical groups for small $n$ (e.g. $\hbox{SO}(2,3)$ is isogenous to $\textup{Sp}_2$, $\textup{SO}(2,4)$ is isogenous to $\textup{U}(2,2)$ and $\textup{SO}(2,6)$ is isogenous with $\textup{Sp}(2,\mathbb{H})$, where $\mathbb{H}$ is a quaternion algebra). Therefore, already the results of Kohnen and Skoruppa in \cite{kohnen_skoruppa} and Gritsenko in \cite{gritsenko} should indicate a connection in the $n=1, 2$ cases. \\
It should be noted that Gritsenko in \cite{gritsenko_dirichlet_orthogonal} initiated the study of a Dirichlet series involving the Fourier coefficients of an orthogonal modular form and its connection with the standard $L$-function attached to it, following factorisation methods similar to \cite{gritsenko}. Sugano in \cite{sugano} extended Gritsenko's result by proving an Euler product relation for a Dirichlet series involving twists of the Fourier coefficients by modular forms on a definite orthogonal group of lower rank. His work can be seen as an extension of Andrianov's work in \cite{andrianov} in the orthogonal setting. As we mentioned, our methods in this paper can be considered as an extension of the methods employed by Kohnen and Skoruppa in \cite{kohnen_skoruppa}, and therefore the result in \cite{sugano} by Sugano is pivotal. Finally, results proved by Shimura in \cite{shimura2004arithmetic} on the sets of solutions $\phi(x,x)=q$, where $\phi$ is a bilinear form and $q \in \mathbb{Q}^{\times}$, turn out to be crucial.\\

We remark that modified Dirichlet series involving Fourier-Jacobi coefficients can also be related to Euler products. This happens in \cite{heim} for example, where the case of $\textup{GSp}_2\times \textup{GL}_2$ is treated. Jointly with Bouganis in \cite{bouganis_psyroukis}, we partially extended this idea to $\textup{GU}(2,2)\times \textup{GL}_2$. The motivation is that these Dirichlet series admit integral representations of Rankin-Selberg type and therefore provide a way of proving the analytic properties of $L$-functions but also results on the algebraicity of their special values (see for example \cite{dummigan}, where the result in \cite{heim} finds a profound application). The isogenies we mentioned above mean that we can consider these as integral representations for the twisted standard $L$-functions of $\textup{SO}(2,3)$ and $\textup{SO}(2,4)$ respectively.\\

% We remark that modified Dirichlet series involving Fourier-Jacobi coefficients can also provide a way of proving the analytic properties of $L$-functions but also results on the algebraicity of their special values. This happens in \cite{heim} for example, where the case of $\textup{Sp}_2\times \textup{GL}_2$ is treated. Jointly with Bouganis, we extended this idea to Hermitian modular forms in \cite{bouganis_psyroukis}. The isogenies we mentioned above actually mean that we can consider these as integral representations for the twisted standard $L$-functions of $\textup{SO}(2,3)$ and $\textup{SO}(2,4)$ respectively. \\

% Finally, we remark that the correspondence between modular forms on orthogonal groups and modular forms on classical groups is well-known in the literature. Recently, Pollack and Shah in \cite{pollack} used the isogeny between $\textup{Sp}_2$ and $\textup{SO}(2,3)$ in order to extend the integral of Kohnen and Skoruppa to apply for arbitrary cuspidal automorphic representations of $\textup{PGSp}_2$. Moreover, Ibukiyama and Williams in \cite{ibukiyama_williams} conjectured specific correspondences between Hermitian modular forms and modular forms on $SO(6)$. Fourier-Jacobi\\

Let us now give a brief outline of our results. The setup is as follows: We start with an even positive definite symmetric 
matrix $S$ of rank $n \geq 1$ so that the lattice $L:=\mathbb{Z}^{n}$ is maximal. We then set
\begin{equation*}
    S_0 := \m{&&1\\ &-S&\\ 1&&}, \textup{ } S_1 := \m{&&1\\ &S_0&\\ 1&&}
\end{equation*}
of real signature $(1,n+1)$ and $(2,n+2)$ respectively. If now $K$ is a field containing $\mathbb{Q}$, we define the corresponding special orthogonal groups of $K$-rational points via
\begin{equation*}
    G^{*}_{K} = \{g \in \hbox{SL}_{n+2}(K) \mid g^{t}S_0g = S_0\}, \textup{ }G_{K} = \{g \in \hbox{SL}_{n+4}(K) \mid g^{t}S_1g = S_1\}.
\end{equation*}
The connected component of the identity $G_{\mathbb{R}}^{0}$ acts on a suitable upper half plane, which we will call $\mathcal{H}_{S} \subset \mathbb{C}^{n+2}$, and a suitable factor of automorphy $j(\gamma, Z)$ can be defined. Let us now denote by $\Gamma_{S}$ the intersection of $G_{\mathbb{R}}^{0}$ with the stabiliser of the lattice $L_1 = \mathbb{Z}^{n+4}$ and let $F : \mathcal{H}_S \longrightarrow \mathbb{C}$ be an orthogonal cusp form of integral even weight $k>n/2+2$ with respect to $\Gamma_{S}$ (see Definitions \ref{modular_form_defn} and \ref{fourier coefficients}). If now $Z = (\tau', z, \tau) \in \mathcal{H}_{S}$, with $\tau, \tau' \in \mathbb{C}$ and $z\in \mathbb{C}^{n+2}$, we can write
\begin{equation*}
    F(Z) = \sum_{m \geq 1}\phi_m(\tau, z)e^{2\pi i m \tau'},
\end{equation*}
and we call the functions $\phi_m(\tau, z)$ the Fourier--Jacobi coefficients of $F$. Now, if $G$ is another orthogonal cusp form with respect to some subgroup $\Gamma$, with $\widetilde{\Gamma}_S \leq \Gamma \leq \Gamma_{S}$ ($\widetilde{\Gamma}_S$ here denotes the discriminant kernel), and with corresponding Fourier--Jacobi coefficients $\{\psi_{m}\}_{m=1}^{\infty}$, the object of interest is
\begin{equation*}
    \mathcal{D}_{F,G}(s) = \sum_{m\geq 1}\langle \phi_{m},\psi_{m}\rangle m^{-s},
\end{equation*}
where $\langle \;,\; \rangle$ is a suitable inner product defined on the space of Fourier--Jacobi forms of certain weight and (lattice) index (see Definition \ref{inner product_fourier-jacobi}). This series converges absolutely for $\textup{Re}(s) > k+1$ and represents a holomorphic function in this domain (see Lemma \ref{convergence_dirichlet}). Our aim is to find an Euler product expression for this Dirichlet series.\\

We will now present our main Theorem \ref{dirichlet} in a simplified form. Let $F$ be an eigenform for some appropriately defined Hecke algebras (see Theorem \ref{dirichlet}) and with Fourier--Jacobi expansion as above. Let $\sigma$ denote the bilinear form induced by $S$, $(D,r) \in \widetilde{\textup{supp}}(L, \sigma)$ (see Definition \ref{support}) and define
\begin{equation*}
    \mathcal{P}_{k,D,r}(\tau', z, \tau) := \sum_{N \geq 1}(V_{N}P_{k,D,r})(\tau,z)e(N\tau'),
\end{equation*}
where $P_{k,D,r}$ is a specific Fourier--Jacobi form which reproduces the Fourier coefficients of any Fourier--Jacobi form, under the Petersson inner product (see Section \ref{Poincar\'e}). Also, $V_{N}$ denotes the index raising operator of Section \ref{v_N}, acting on Fourier--Jacobi forms. Let $L_0 = \mathbb{Z}^{n+2}$ and $L_0^{*} = S_0^{-1}L_0$. Assume now it exists $\xi \in V_0:=\mathbb{Q}^{n+2}$ such that $\{\xi^{t}S_0x \mid x \in L_0\}=\mathbb{Z}$ and $S_0[\xi] = -2D/q$, where $q$ denotes the level of $S$ (see Definition \ref{level}) and further assume $A(\xi) \neq 0$ for the Fourier coefficient of $F$. Consider then the algebraic subgroup of $G_{\mathbb{Q}}^{*}$, defined by
\begin{equation*}
    H(\xi)_{\mathbb{Q}} = \left\{g \in G_{\mathbb{Q}}^{*} \mid g\xi = \xi\right\}.
\end{equation*}
We consider the case when the class number $h:= \#(H(\xi)_{\mathbb{Q}}\backslash H(\xi)_{\mathbb{A}}/(H(\xi)_{\mathbb{A}}\cap C)$ equals $1$, where $C = \{x \in G_{\mathbb{A}}^{*} \mid xL_0=L_0\}$. Here, the subscript $_{\mathbb{A}}$ denotes the adelization of an algebraic linear group (see Section \ref{relation to the class number}). Then, our main Theorem \ref{dirichlet} reads:
\begin{mythm}{8.2}
With the notation and assumptions as above, let $\mathcal{P}$ denote a sufficiently large (specified) finite set of prime numbers. Define then the Dirichlet series
\begin{equation*}
    \mathcal{D}_{F,\mathcal{P}_{k,D,r},\mathcal{P}}(s) := \sum_{\substack{N=1\\(N,p)=1\forall p \in \mathcal{P}}}^{\infty} \langle \phi_N, V_NP_{k,D,r}\rangle N^{-s}.
\end{equation*}
We then have that $L_{\mathcal{P}}\left(\mathbf{1};s-k+(n+3)/2\right)\mathcal{D}_{F,P_{k,D,r},\mathcal{P}}(s)$ can be written as:
\begin{equation*}
    A(\xi)L_{\mathcal{P}}\left(F;s-k+(n+2)/2\right)\zeta_{\xi,\mathcal{P}}(s-k+n+1)\times \begin{cases}1 & \textup{if }n \textup{ odd} \\ \zeta_{\mathcal{P}}(2s-2k+n+2)^{-1} & \textup{if } n \textup{ even}
    \end{cases}.
\end{equation*}
Here, $L(-,s)$ denotes the standard $L$-function attached to orthogonal modular forms and $\mathbf{1}$ denotes the constant function on $H(\xi)_{\mathbb{A}}$. Also $\zeta_{\xi}(s)$ is a Dirichlet series counting the number of solutions of congruences (see Proposition \ref{first form of result}). For any zeta function (or Dirichlet series) appearing, we use the subscript $\mathcal{P}$ to denote that we don't take into account the $p$-factors (or $p$-terms) for $p \in \mathcal{P}$.
\end{mythm}
Note that in Theorem \ref{dirichlet}, we cover the case $h > 1$ as well. The rest of the paper (Section \ref{explicit examples}) is devoted in obtaining explicit examples where $h=1$. In those cases, we obtain a clear-cut Euler product expression for the specific Dirichlet series $\mathcal{D}_{F,\mathcal{P}_{k,-q,r}, \mathcal{P}}(s)$ (here we make a particular choice for $D$, i.e. $D=-q$ and take $\xi=(1,\textbf{0}, 1)^{t}$).\\\\
We find all instances when this happens in the rank $1$ case and we give the result in Theorem \ref{rank 1}. Note that in Theorem \ref{rank 1}, we recover the case of Kohnen and Skoruppa in \cite{kohnen_skoruppa} for our choice of $D$ (we take $S=2$), up to a finite number of Euler factors. What is more, we already produce new results related to $L$-functions attached to paramodular forms, as it is not clear how can one use the methods in \cite{kohnen_skoruppa} to produce such results for paramodular forms directly. Finally, in Theorem \ref{even rank theorem}, we provide at least one explicit example for each of the cases $n=2,4,6,8$. These examples can be related to modular forms for other classical groups, such as Hermitian ($n=2$) and quaternionic ($n=4$).\\\\
The structure of the paper is as follows: In Sections \ref{preliminaries} and \ref{fourier-jacobi forms} we give the definitions of the main objects of consideration, i.e. orthogonal groups, modular forms and Fourier--Jacobi forms. In Section \ref{v_N}, we calculate the action of the adjoint of the level-raising operator $V_N$ with respect to the Petersson inner product of Fourier--Jacobi forms. In Section \ref{Poincar\'e}, we define Poincar\'{e} series and in Section \ref{new section}, we define the Dirichlet series of interest and prove some preliminary results. Section \ref{relation to the class number} is devoted in showing how results of Shimura in \cite{shimura2004arithmetic} and \cite{shimura_diophantine_2006} can be used in our setting and in Section \ref{relation to sugano}, we use Sugano's theorem in \cite{sugano} to deduce our main Theorem \ref{dirichlet}. Finally, in Section \ref{explicit examples}, we obtain clear-cut Euler product expressions for some specific orthogonal groups. %Finally, in Section \ref{conclusions}, we give a conclusion to the paper by discussing some of the difficulties of the problem and explaining the motivation behind the examples we provide.
% The structure of the paper is as follows: In Sections \ref{preliminaries}, \ref{fourier-jacobi forms} and \ref{Poincar\'e}, we give the definitions of the main objects of consideration, i.e. modular forms, Fourier--Jacobi forms and Poincar\'e series. In section \ref{v_N}, we calculate the action of the adjoint of the level$-$raising operator $V_N$ with respect to the Petersson inner product of Fourier--Jacobi forms and define the Dirichlet series under consideration. Section \ref{relation to the class number} is devoted in showing how results of Shimura in \cite{shimura2004arithmetic} and \cite{shimura_diophantine_2006} can be used in our setting and in Section \ref{relation to sugano}, we use Sugano's theorem in \cite{sugano} to deduce our main Theorem \ref{dirichlet}. Finally, in Section \ref{explicit examples}, we obtain clear-cut Euler product expressions for some specific orthogonal groups. %Finally, in Section \ref{conclusions}, we give a conclusion to the paper by discussing some of the difficulties of the problem and explaining the motivation behind the examples we provide.
\begin{notation}
    We denote the space of $m\times n$ matrices with coefficients in a ring $R$ with $\textup{M}_{m,n}(R)$. If $n=m$, we often use the notation $\textup{M}_{n}(R)$. By $1_n$, we denote the $n\times n$ identity matrix. For any matrix $M \in \textup{M}_{n}(R)$, we denote by $\det(M), \textup{ tr}(M)$ the determinant and trace of $M$ respectively. By $\textup{GL}_n(R)$, we denote the matrices in $M_{n}(R)$ with non-zero determinant and by $\textup{SL}_n(R)$ the matrices with determinant $1$. For any vector $v$, we denote by $v^{t}$ its transpose. We also use the bracket notation $A[B] := \overline{B}^{t}AB$ for suitably sized complex matrices $A,B$. By $\textup{diag}(A_1, A_2, \cdots, A_n)$, we will denote the block diagonal matrix with the matrices $A_1, A_2,\cdots, A_n$ in the diagonal blocks. For a complex number $z$, we denote by $e(z) := e^{2\pi i z}$. Finally, let $\zeta(s)$ denote the Riemann zeta function and $\Gamma(s):= \int_{0}^{\infty}t^{s-1}e^{-t}\hbox{d}t$ denote the Gamma function.
\end{notation}
\section{Preliminaries}\label{preliminaries}
Let $V$ denote a finite dimensional vector space over $\mathbb{Q}$. We start with the following definition.
\begin{defn}
    A $\mathbb{Z}$-lattice $\Lambda$ is a free, finitely generated $\mathbb{Z}$-module, which spans $V$ over $\mathbb{Q}$.
    % together with a symmetric non-degenerate bilinear form $V \times V \longrightarrow \mathbb{Q}$.
\end{defn}
In the following, let $V := \mathbb{Q}^{n}$ and $L := \mathbb{Z}^{n}$ with $n \geq 1$. Assume $S$ is an even integral positive definite symmetric matrix of rank $n$. Here, even means $S[x] \in 2\mathbb{Z}$ for all $x \in L$. We define
\begin{equation*}
    S_0 := \m{&&1\\ &-S&\\1&&}, \ S_1 := \m{&&1\\&S_0&\\1&&}
\end{equation*}
of real signatures $(1,n+1)$ and $(2,n+2)$ respectively.
% % Let then $V_0:= \mathbb{Q}^{n+2}$ and consider
% \begin{equation*}
%     S_1 := \m{&&1\\&S_0&\\1&&}
% \end{equation*}
% of real signature $(2,n+2)$. 
Let also $V_0:= \mathbb{Q}^{n+2}$ and $V_1 := \mathbb{Q}^{n+4}$ and consider the quadratic spaces $(V_0, \phi_0), \ (V_1, \phi_1)$, where
\begin{align*}
    \phi_i : V_i \times V_i &\longmapsto \mathbb{Q}\\
    (x,y) &\longmapsto \frac{1}{2}x^{t}S_iy,
\end{align*}
for $i= 0, 1$. 
% Consider now a Witt decomposition of the form
% \begin{equation*}
%     V_0 = \mathbb{Q}e_{n+3} \perp V \perp \mathbb{Q}e_2, 
% \end{equation*}
% where $e_1,\cdots,e_{n+4}$ are the standard basis vectors for the space $V_1:=\mathbb{Q}^{n+4}$.
We then have that $\phi:=\phi_{0} \mid_{V\times V}$ is just $(x, y) \longmapsto -x^{t}Sy/2$, and we make the assumption that $L = \mathbb{Z}^{n}$ is a maximal $\mathbb{Z}$-lattice with respect to $\phi$. Here, maximal means that the lattice $L$ is maximal among all $\mathbb{Z}$-lattices in $V$ on which $\phi[x]$ takes values in $\mathbb{Z}$, where we use the notation $\phi[x]:=\phi(x,x)$ for $x \in V$.\\

From \cite[Lemma 6.3]{shimura2004arithmetic}, we then obtain that $L_0:= \mathbb{Z}^{n+2}$ is a $\mathbb{Z}$-maximal lattice in $V_0$.\\

If now $K \supset \mathbb{Q}$ is a field, we define the corresponding special orthogonal groups of $K$-rational points via
\begin{equation*}
    G^{*}_{K} := \{g \in \textup{SL}_{n+2}(K) \mid g^{t}S_0g = S_0\},
\end{equation*}
\begin{equation*}
    G_{K} := \{g \in \textup{SL}_{n+4}(K) \mid g^{t}S_1g = S_1\}.
\end{equation*}
We view $G^{*}_{K}$ as a subgroup of $G_{K}$ via the embedding $g \longmapsto \m{1&&\\&g&\\ &&1}$.\\
% Moreover, for any quadratic space $(U, \omega)$, we also use the notation
% \begin{equation*}
%     \textup{SO}^{\omega}(U) := \{\alpha \in \textup{GL}(U) \mid \omega[\gamma x] = \omega[x], \ \forall x \in U\}.
% \end{equation*}
% We also use the notation $\hbox{SO}^{\phi_0}(V_0)$ and in general $\hbox{SO}^{\phi}(V)$ for any quadratic space $(V,\phi)$.\\\\
Let now $\mathcal{H}_{S}$ denote one of the connected components of
\begin{equation*}
    \{Z \in V_0 \otimes_{\mathbb{Q}} \mathbb{C} \mid \phi_0[\textup{Im}Z] > 0\}.
\end{equation*}
In particular, if we denote by $\mathcal{P}_{S} := \{y'=(y_1,y,y_2) \in \mathbb{R}^{n+2} \mid y_1 >0, \phi_0[y']>0\}$, we choose
\begin{equation*}
    \mathcal{H}_S = \{z=x+iy \in V_0 \otimes_{\mathbb{R}} \mathbb{C} \mid y \in \mathcal{P}_S\}.
\end{equation*}
For a matrix $g \in \textup{Mat}(n+4, \mathbb{R})$, we write it as
\begin{equation*}
    g = \m{\alpha & a^t &\beta \\ b&A&c\\ \gamma&d^t&\delta},
\end{equation*}
with $A \in \textup{Mat}(n+2, \mathbb{R}), \alpha,\beta,\gamma,\delta \in \mathbb{R}$ and $a,b,c,d$ real column vectors. Now the map
\begin{equation}\label{action_orthogonal}
    Z \longmapsto g\langle Z\rangle = \frac{-\frac{1}{2}S_0[Z]b+AZ+c}{-\frac{1}{2}S_0[Z]\gamma+d^tZ+\delta}
\end{equation}
gives a well-defined transitive action of $G_{\mathbb{R}}^{0}$ on $\mathcal{H_{S}}$, where $G_{\mathbb{R}}^{0}$ denotes the identity component of $G_{\mathbb{R}}$. The denominator of the above expression is the factor of automorphy
\begin{equation*}
    j(g, Z) := -\frac{1}{2}S_0[Z]\gamma+d^tZ+\delta.
\end{equation*}
Let now $L_1 := \mathbb{Z}^{n+4}$ and define the groups
\begin{equation*}
    \Gamma(L_0):=\{g \in G_{\mathbb{Q}}^{*} \mid gL_0=L_0\}, 
\end{equation*}
\begin{equation*}
\Gamma(L_1) := \{g \in G_{\mathbb{Q}} \mid gL_1 = L_1\}.
\end{equation*}
Let also $\Gamma^{+}(L_0) = \Gamma(L_0) \cap G^{*,0}_{\mathbb{R}}$. Let now
\begin{equation*}
    \Gamma_{S} := G_{\mathbb{R}}^{0} \cap \Gamma(L_1),
\end{equation*}
and
\begin{equation*}
    \widetilde{\Gamma}_S:= \{M \in \Gamma_S \mid M \in 1_{n+4} + \textup{M}_{n+4}(\mathbb{Z})S_1\},
    \end{equation*}
the \textbf{discriminant kernel}.
\begin{defn}\label{modular_form_defn}
Let $k \in \mathbb{Z}$ and $\widetilde{\Gamma}_S\leq \Gamma \leq \Gamma_S$ a subgroup of finite index. A holomorphic function $F : \mathcal{H}_S \longrightarrow \mathbb{C}$ is called a modular form of weight $k$ with respect to $\Gamma$ if it satisfies the equation
\begin{equation*}
    (F|_k \gamma)(Z) := j(\gamma, Z)^{-k}F(\gamma\langle Z\rangle) = F(Z)
\end{equation*}
for all $\gamma \in \Gamma$ and $Z \in \mathcal{H}_S$. We will denote the set of such forms by $M_{k}(\Gamma)$.
\end{defn}
Let us now give a couple more definitions on lattices.
\begin{defn}
    Given a $\mathbb{Z}$-lattice $\Lambda$, equipped with a bilinear form $\sigma : V \times V \longrightarrow \mathbb{Q}$, where $V =  \Lambda \otimes_{\mathbb{Z}} \mathbb{Q}$, we define its dual lattice by
    \begin{equation*}
        \Lambda^{*} := \{x \in \Lambda \otimes_{\mathbb{Z}} \mathbb{Q} \mid \sigma(x, y) \in \mathbb{Z} \textup{ }\forall y \in \Lambda\}.
    \end{equation*}
\end{defn}
\begin{defn}\label{level}
    With the notation as above, the level of the lattice $\Lambda$ is given as the least positive integer $q$ such that $\frac{1}{2}q\sigma(x,x) \in \mathbb{Z}$ for every $x \in \Lambda^{*}$.
\end{defn}
Now, if $\widetilde{\Gamma}_S\leq \Gamma \leq \Gamma_S$, $F \in M_{k}(\Gamma)$ admits a Fourier expansion of the form (see \cite[(5.10)]{sugano_lifting})
\begin{equation}\label{orthogonal_fourier_expansion}
    F(Z) = \sum_{r \in L_0^{*}}A(r)e(r^{t}S_0Z),
\end{equation}
where $Z \in \mathcal{H}_S$. It is then Koecher's principle that gives us that $A(r)=0$ unless $r \in L_0^{*} \cap \overline{\mathcal{P_S}}$ ($\overline{\mathcal{P}_S}$ denotes the closure of $\mathcal{P_S}$, see \cite[Theorem 1.5.2]{eisenstein_thesis}). By \cite[Theorem 1.6.23]{eisenstein_thesis}, and because $L$ is maximal, we have the following definition for cusp forms.
\begin{defn}\label{fourier coefficients}
    If $\widetilde{\Gamma}_S\leq \Gamma \leq \Gamma_S$, $F \in M_{k}(\Gamma)$ is called a \textbf{cusp form} if it admits a Fourier expansion of the form
    \begin{equation*}
        F(Z) = \sum_{r \in L_0^{*} \cap \mathcal{P_S}}A(r)e(r^{t}S_0Z),
    \end{equation*}
    We denote the space of cusp forms by $S_k(\Gamma)$.
\end{defn}
Finally, we give the definition of the Maass space, due to Krieg in \cite[Section 5]{krieg_jacobi}.
\begin{defn}\label{maass space}
    Let $\widetilde{\Gamma}_S\leq \Gamma \leq \Gamma_S$. The \textbf{Maass space} $M_{k}^{*}(\Gamma)$ consists of all $F \in M_{k}(\Gamma)$, so that if their Fourier expansion is
    \begin{equation*}
        F(Z) = \sum_{r \in L_0^{*}}A(r)e(r^{t}S_0Z),
    \end{equation*}
    we have
    \begin{equation*}
        A(r) = \sum_{d \mid \gcd(\rho)} d^{k-1}A\left(\m{lm/d^2\\-S^{-1}\lambda/d\\1}\right),
    \end{equation*}
    where $r = S_0^{-1}\rho$, with $\rho = \m{m&\lambda&l}^{t}$. We also write $S_k^{*}(\Gamma)$ for the Maass space consisting of cusp forms. 
\end{defn}
\section{Fourier--Jacobi Forms}\label{fourier-jacobi forms}
In this section, we will define Fourier--Jacobi forms of lattice index. We mainly follow Mocanu's thesis in \cite{jacobi_lattice} and Krieg in \cite{krieg_jacobi}. \\\\
For now, assume that $V$ is a vector space of dimension $n<\infty$ over $\mathbb{Q}$ together with a positive definite symmetric bilinear form $\sigma$ and an even lattice $\Lambda$ in $V$, i.e. $\sigma(\lambda, \lambda) \in 2\mathbb{Z}$ for all $\lambda \in \Lambda$. We start with the following definitions:
\begin{defn}
    We define the \textbf{Heisenberg group} to be
    \begin{equation*}
        H^{(\Lambda,\sigma)}(\mathbb{R}) = \{(x,y,\zeta) \mid x,y \in \Lambda \otimes \mathbb{R}, \zeta \in S^{1}\},
    \end{equation*}
    where $S^{1}:=\{z \in \mathbb{C} \mid |z|=1\}$, equipped with the following composition law:
    \begin{equation*}
        (x_1,y_1,\zeta_1)(x_2,y_2,\zeta_2) := (x_1+x_2, y_1+y_2, \zeta_1\zeta_2e(\sigma(x_1,y_2))).
    \end{equation*}
    The \textbf{integral Heisenberg group} is defined to be $H^{(\Lambda,\sigma)}(\mathbb{Z}):= \{(x,y,1) \mid x,y \in \Lambda\}$ and in the following we drop the last coordinate for convenience.
\end{defn}
\begin{proposition}
    The group $\hbox{SL}_2(\mathbb{R})$ acts on $H^{(\Lambda, \sigma)}(\mathbb{R})$ from the right, via
    \begin{equation*}
        ((x,y,\zeta), A) \longmapsto (x,y,\zeta)^{A} := \left((x,y)A, \zeta e\left(\sigma[(x,y)A]-\frac{1}{2}\sigma(x,y)\right)\right).
    \end{equation*}
    where $(x,y)A$ denotes the formal multiplication of the vector $(x,y)$ with $A$, i.e. if $A =\m{a&b\\c&d}$, we have $(x,y)A := (ax+cy, bx+dy)$.
\end{proposition}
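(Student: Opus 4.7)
The plan is to verify the defining axioms of a right action of $SL_2(\mathbb{R})$ on the Heisenberg group $H^{(\Lambda,\sigma)}(\mathbb{R})$: (i) the identity matrix $I \in SL_2(\mathbb{R})$ acts trivially, and (ii) the composition rule $((x,y,\zeta)^A)^B = (x,y,\zeta)^{AB}$ holds for all $A, B \in SL_2(\mathbb{R})$. In addition, since the Heisenberg group carries a group structure of its own, I would verify that each $A$ acts by a group automorphism, so that the assignment respects the composition law $(x_1,y_1,\zeta_1)(x_2,y_2,\zeta_2) = (x_1+x_2, y_1+y_2, \zeta_1\zeta_2\, e(\sigma(x_1, y_2)))$.

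Axiom (i) is a direct substitution: $(x,y)I = (x,y)$, and unwinding the bracket $\sigma[(x,y)I]$ shows that the exponent of $e(\cdot)$ in the third coordinate reduces to zero modulo $\mathbb{Z}$, so $(x,y,\zeta)^I = (x,y,\zeta)$. For axiom (ii), I would set $(x',y') := (x,y)A$ and $\zeta' := \zeta \, e\bigl(\sigma[(x,y)A] - \tfrac{1}{2}\sigma(x,y)\bigr)$, so that $(x,y,\zeta)^A = (x',y',\zeta')$. Applying $^B$ and using associativity $(x',y')B = (x,y)(AB)$, the first two entries of $((x,y,\zeta)^A)^B$ match those of $(x,y,\zeta)^{AB}$ automatically. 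The nontrivial content lies in the third coordinate, where one must establish the $2$-cocycle identity
\[
\sigma[(x,y)A] - \tfrac{1}{2}\sigma(x,y) + \sigma[(x',y')B] - \tfrac{1}{2}\sigma(x',y') \equiv \sigma[(x,y)AB] - \tfrac{1}{2}\sigma(x,y) \pmod{\mathbb{Z}}.
\]
After cancellation, this reduces to a bilinear identity that should follow from the bilinearity and symmetry of $\sigma$, together with the determinant condition $ad - bc = 1$.

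The automorphism property is verified analogously: I would compute both $((x_1,y_1,\zeta_1)(x_2,y_2,\zeta_2))^A$ and $(x_1,y_1,\zeta_1)^A \, (x_2,y_2,\zeta_2)^A$ via the Heisenberg composition law, and show that the extra phase $e(\sigma(x_1,y_2))$ coming from the first product reconciles with the phase picked up after transforming the pairs by $A$. Once more, this is a bilinearity computation where $\det A = 1$ is the key relation that makes the two sides agree.

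The main obstacle is the bookkeeping inside the $2$-cocycle computation: one must expand $\sigma[\cdot]$ on expressions involving $A$, $B$, and $AB$, track the contributions from each entry of the matrices, and invoke the symmetry $\sigma(x,y) = \sigma(y,x)$ together with $\det A = 1$ to cancel mixed terms. No deep input is required beyond elementary manipulations with $\sigma$ and $SL_2$; the difficulty is entirely one of notational care.
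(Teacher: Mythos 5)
The paper states this proposition without proof (it is imported from the thesis \cite{jacobi_lattice}), so there is no argument of the author's to compare against; your outline is the standard direct verification and is essentially correct, with one bookkeeping point worth fixing. The identity axiom already forces the reading $\sigma[(u,v)] = \tfrac{1}{2}\sigma(u,v)$ of the bracket applied to a pair, and with that reading the phase $\Phi_A(x,y) := \sigma[(x,y)A] - \tfrac{1}{2}\sigma(x,y)$ is a coboundary, $\Phi_A = f((x,y)A) - f(x,y)$ with $f(u,v) = \tfrac{1}{2}\sigma(u,v)$; hence your ``$2$-cocycle identity'' $\Phi_A(x,y) + \Phi_B((x,y)A) = \Phi_{AB}(x,y)$ telescopes with no computation and, in particular, needs neither bilinearity nor $ad-bc=1$ --- the composition axiom would hold verbatim for all of $GL_2(\mathbb{R})$. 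The determinant condition is genuinely used only in the part you list last, namely that each $A$ acts by an automorphism of $H^{(\Lambda,\sigma)}(\mathbb{R})$ (which is what the semidirect product in the following definition requires): writing $(x_i',y_i') = (x_i,y_i)A$, the discrepancy between $\bigl((x_1,y_1,\zeta_1)(x_2,y_2,\zeta_2)\bigr)^A$ and $(x_1,y_1,\zeta_1)^A(x_2,y_2,\zeta_2)^A$ reduces, after expanding by bilinearity and symmetry, to $\tfrac{1}{2}\bigl(\sigma(x_1,y_2)-\sigma(x_2,y_1)\bigr) - \tfrac{1}{2}\bigl(\sigma(x_1',y_2')-\sigma(x_2',y_1')\bigr)$, and the identity $\sigma(x_1',y_2')-\sigma(x_2',y_1') = (ad-bc)\bigl(\sigma(x_1,y_2)-\sigma(x_2,y_1)\bigr)$ shows this vanishes precisely because $ad-bc=1$. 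So your plan goes through; only the attribution of where $\det A = 1$ enters should be corrected.
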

\begin{defn}
    The \textbf{real Jacobi} group associated with $(\Lambda, \sigma)$, denoted with $J^{(\Lambda, \sigma)}(\mathbb{R})$, is defined to be the semi-direct product of $\hbox{SL}_2(\mathbb{R})$ and $H^{(\Lambda, \sigma)}(\mathbb{R})$. The composition law is then
    \begin{equation*}
        (A,h)\cdot (A',h') := (AA', h^{A'}h').
    \end{equation*}
    We also define the \textbf{integral Jacobi group} to be the semi-direct product of $\hbox{SL}_{2}(\mathbb{Z})$ and $H^{(\Lambda, \sigma)}(\mathbb{Z})$ and we will denote it by $J^{(\Lambda, \sigma)}$.
\end{defn}
We are now going to define some slash operators, acting on holomorphic, complex-valued functions on $\mathbb{H} \times (\Lambda \otimes \mathbb{C})$.
\begin{defn}\label{k-actions}
    Let $k$ be a positive integer and $f : \mathbb{H} \times (\Lambda \otimes \mathbb{C}) \longrightarrow \mathbb{C}$ a holomorphic function. For $M =\m{a&b\\c&d} \in \hbox{SL}_{2}(\mathbb{R})$ , we define
    \begin{equation*}
        (f|_{k, (\Lambda, \sigma)} [M])(\tau, z) := (c\tau+d)^{-k}e^{-\pi i c\sigma(z,z)/(c\tau + d)}f\left(\frac{a\tau+b}{c\tau+d}, \frac{z}{c\tau+d}\right).
    \end{equation*}
    In the case when $M \in \hbox{GL}_2^{+}(\mathbb{R})$, we define the action by using $\det(M)^{-1/2}M$ instead of $M$. For $h = (x,y,\zeta) \in H^{(\Lambda, \sigma)}(\mathbb{R})$, we define
    \begin{equation*}
        (f \mid_{k, (\Lambda,\sigma)}h)(\tau,z): = \zeta \cdot e^{\pi i \tau \sigma(x, x)+2\pi i \sigma(x, z)}f(\tau, z+x\tau+y).
\end{equation*}
    Finally, for the action of $J^{(\Lambda, \sigma)}(\mathbb{R})$, we have
    \begin{equation*}
        (f, (A,h)) \longmapsto (f \mid_{k, (\Lambda, \sigma)}(A,h))(\tau, z) := (f \mid_{k, (\Lambda, \sigma)} A) \mid_{k, (\Lambda, \sigma)} h.
    \end{equation*}
\end{defn}
We now have the following definition (\cite[Definition 1.23]{jacobi_lattice}).
\begin{defn}\label{fourier-jacobi-defn}
Let $V_{\mathbb{C}} := V \otimes \mathbb{C}$ and extend $\sigma$ to $V_{\mathbb{C}}$ by $\mathbb{C}$-linearity. For $k$ a positive integer, a holomorphic function $f: \mathbb{H} \times V_{\mathbb{C}} \rightarrow \mathbb{C}$ (where $\mathbb{H}$ denotes the usual upper half plane) is called a Fourier--Jacobi form of weight $k$ with respect to $(\Lambda, \sigma)$ if the following hold:
\begin{itemize}
    \item For all $\gamma \in J^{(\Lambda, \sigma)}$ and $(\tau, z) \in \mathbb{H} \times V_{\mathbb{C}}$, we have
    \begin{equation*}
        (f \mid_{k, (\Lambda, \sigma)} \gamma) (\tau,z) = f(\tau,z).
    \end{equation*}
    \item $f$ has a Fourier expansion of the form
\begin{equation*}
    f(\tau, z) = \sum_{m \in \mathbb{Z}, r \in \Lambda^{*}, 2m \geq \sigma[r]} c_{f}(m,r)e(m\tau + \sigma(r, z)).
\end{equation*}
\end{itemize}
We denote the space of such forms by $J_{k}(\Lambda, \sigma)$. We say $f$ is a \textbf{Jacobi cusp form} if $c_f(m,r)=0$ when $2m= \sigma[r]$. We denote the space of Fourier--Jacobi cusp forms by $S_{k}(\Lambda, \sigma)$.
\end{defn}
\begin{remark}
    We could also characterise Fourier--Jacobi forms as modular forms transforming under the action of the parabolic subgroup of $\Gamma_{S}$, given by
    \begin{equation*}\label{parabolic subgroup orthogonal}
        \Gamma_{S,J} := \left\{\m{*&*\\0&D} \in \Gamma_{S} \mid D \in \textup{SL}_{2}(\mathbb{Z})\right\}.
    \end{equation*}
    This is true because there is an embedding $\iota : J_S \longrightarrow \Gamma_{S,J}$ (see \cite[p. 44]{eisenstein_thesis} or \cite[p. 27]{sugano_lifting}).
\end{remark}
We now have a notion of a scalar product for elements of $S_{k}(\Lambda, \sigma)$ (\cite[Definition 1.33]{jacobi_lattice}). 
\begin{defn}\label{inner product_fourier-jacobi}
    Let $\phi, \psi \in S_{k}(\Lambda, \sigma)$. If then $U \leq J^{(\Lambda,\sigma)}$ of finite index, we define their Petersson inner product via
    \begin{equation*}
        \langle \phi, \psi\rangle_{U} = \frac{1}{\left[J^{\Lambda}:U\right]}\int_{U \backslash \mathcal{H}\times (\Lambda \otimes \mathbb{C})} \phi(\tau,z)\overline{\psi(\tau, z)}v^{k}e^{-2\pi \sigma(y,y)v^{-1}}\textup{d}V,
    \end{equation*}
    where $\tau = u+iv$, $z = x+iy$ and $\textup{d}V := v^{-n-2}\textup{d}u\textup{d}v\textup{d}x\textup{d}y$. This inner product does not depend on the choice of $U$, so in what follows, we drop the subscript.
\end{defn}
We now specify to our case, by taking $\Lambda = L = \mathbb{Z}^{n}$ and $\sigma(x,y) = x^{t}Sy$ for all $x,y \in V$.\\

Let us discuss the Fourier--Jacobi expansion of orthogonal cusp forms of weight $k$ with respect to a subgroup $\widetilde{\Gamma}_{S} \leq \Gamma \leq \Gamma_S$. If we write $Z = (\tau', z, \tau) \in \mathcal{H}_{S}$ with $\tau',\tau \in \mathbb{C}, \ z \in \mathbb{C}^{n}$, we have that for $F \in S_k(\Gamma)$ and any $m \in \mathbb{Z}$:
\begin{equation*}
    F(\tau'+m, z, \tau) = F(\tau', z, \tau).
\end{equation*}
Hence, we can write
\begin{equation}\label{fourier-jacobi expansion_og}
    F(Z) = \sum_{m \geq 1}\phi_m(\tau, z)e^{2\pi i m \tau'},
\end{equation}
and we call the functions $\phi_m(\tau, z)$ the Fourier--Jacobi coefficients of $F$. We note that then $\phi_{m} \in S_{k}(\mathbb{Z}^{n}, m\sigma)$ (see \cite[(5.10), (5.11)]{sugano_lifting}).
% Using \cite[Theorem 2]{krieg_jacobi}, we can write the above as ($L^{*}=S^{-1}L$ in our case)
% \begin{equation*}
%     \phi_m(\tau, z) = \sum_{l=0}^{\infty} \sum_{\substack{\lambda \in \mathbb{Z}^n, \\ 2ml\geq \lambda^tS^{-1}\lambda}}c_{\phi_{m}}(\lambda, l)e^{2\pi i(l\tau + \lambda^{t}z)}.
% \end{equation*}
% Finally, let us give the connection between the Maass space defined in Definition \ref{maass space} and Fourier--Jacobi forms, as this can be found in \cite[Theorem 3]{krieg_jacobi}.
% \begin{theorem}\label{maass space, jacobi}
%     With the notation as above, the mapping $F \longmapsto \phi_{1}$ gives an isomorphism between the Maass space $M_{k}^{*}(\Gamma_{S})$ and $J_{k}(L,\sigma)$, as vector spaces.
% \end{theorem}
% We end the section with the following remark:
% \begin{remark}\label{coefficients}
% According to \cite[Proposition 1.25]{jacobi_lattice}, we have that the coefficients $c_{\phi}(n,r)$ depend only on $r \mod \mathbb{Z}^{n}$ as well as on $2n - S[r]$. In other words 
% \begin{equation*}
%     c_{\phi}(n,r) = c_{\phi}(n',r')
% \end{equation*}
% whenever $r \equiv r\ \mod \mathbb{Z}^{n}$ and $2n - S[r] = 2n' - S[r']$. We note here $r \in S^{-1}\mathbb{Z}^{n}$.
% \end{remark}
\section{$V_{N}$ operator}\label{v_N}
Let now again $\sigma(x,y) = x^{t}Sy$ for $x,y \in V$.  Let $N \geq 1$ and define $M_{2}(\mathbb{Z})_N := \{g \in M_{2}(\mathbb{Z}) | \det g = N\}$. For any $M = \m{a&b\\c&d} \in M_{2}(\mathbb{Z})_N$ and $\tau \in \mathbb{H}$ (usual upper half plane), we define
\begin{equation*}
    M\langle \tau\rangle := \frac{a\tau+b}{c\tau+d}.
\end{equation*}
Given a Fourier--Jacobi cusp form $\phi \in S_k(\mathbb{Z}^{n}, \sigma)$, we define the operator
\begin{equation*}
    V_{N} : S_{k}(\mathbb{Z}^{n}, \sigma) \longrightarrow S_{k}(\mathbb{Z}^{n}, N\sigma),
\end{equation*}
given by
\begin{equation*}
(V_{N}\phi)(\tau, z) = N^{k-1}\sum_{M \in \textup{SL}_{2}(\mathbb{Z})\backslash M_{2}(\mathbb{Z})_{N}} (c\tau+d)^{-k}e^{-\pi i cNS[z]/(c\tau+d)}\phi\left(M\langle \tau \rangle, \frac{Nz}{c\tau+d}\right),
\end{equation*}
This is well-defined by \cite[Lemma 6.1]{sugano_lifting} or \cite[Definition 4.25]{jacobi_lattice}. Our aim in this Section is to compute its adjoint with respect to the scalar product of Fourier--Jacobi forms. This is the analogue of the main Proposition in \cite{kohnen_skoruppa}.\\

Now, if $\phi \in S_k(\mathbb{Z}^{n}, N\sigma)$, we will write its Fourier expansion in a form similar to Kohnen and Skoruppa in \cite[Section 2]{kohnen_skoruppa}. From Definition \ref{fourier-jacobi-defn}, we can write
\begin{equation*}
    \phi(\tau, z) = \sum_{\substack{m \in \mathbb{Z}, r \in \mathbb{Z}^{n}\\ 2Nm > r^{t}S^{-1}r}}c_{\phi}(m,r)e(m\tau + r^{t}z),
\end{equation*}
for some $c_{\phi}(m,r) \in \mathbb{C}$. Now, from the condition $2Nm > r^{t}S^{-1}r$ and the definition of the level (see Definition \ref{level}), we can write
\begin{equation*}
    Nmq - \frac{1}{2}qS^{-1}[r] = -D \implies m = \frac{\frac{1}{2}qS^{-1}[r]-D}{qN},
\end{equation*}
for some integer $D < 0$. Therefore, we can write
\begin{equation}\label{fourier-jacobi expansion}
    \phi(\tau, z) = \sum_{\substack{D \in \mathbb{Z}_{<0}, r \in \mathbb{Z}^{n}\\D \equiv \frac{1}{2}qS^{-1}[r] \mod qN}}c_{\phi}\left(D, r\right)e\left(\frac{\frac{1}{2}qS^{-1}[r]-D}{qN}\tau + r^{t} z\right).
\end{equation}
\begin{remark}\label{equality in coefficients}
    By adjusting \cite[Proposition 1.25]{jacobi_lattice} to the notation above, we have that 
    \begin{equation*}
        D=D'\textup{ and } S^{-1}r \equiv S^{-1}r' \pmod{N \mathbb{Z}^{n}} \implies c_{\phi}(D,r) = c_{\phi}(D',r') .
    \end{equation*}
\end{remark}
We are now ready to give the result concerning the adjoint of $V_N$. We use the notation that $s \equiv s' \Mod{NS\mathbb{Z}^{n}} \iff s-s' = NSu$ for some $u \in \mathbb{Z}^n$.
% Given a Jacobi form $\phi \in S_{k}(\mathbb{Z}^{n}, \sigma)$ and $N \geq 1$, we define the operator
% \begin{equation*}
%     V_{N} : S_{k}(\mathbb{Z}^{n}, \sigma) \longrightarrow S_{k}(\mathbb{Z}^{n}, N\sigma),
% \end{equation*}
% given by (compare also with \cite[Section 6]{sugano_lifting}, \cite[Definition 4.25]{jacobi_lattice})
% \begin{equation*}
% (V_{N}\phi)(\tau, z) = N^{k-1}\sum_{M \in \textup{SL}_{2}(\mathbb{Z})\backslash M_{2}(\mathbb{Z})_{N}} (c\tau+d)^{-k}e^{-\pi i cNS[z]/(c\tau+d)}\phi\left(M\langle \tau \rangle, \frac{Nz}{c\tau+d}\right),
% \end{equation*}
% where $M_{2}(\mathbb{Z})_N = \{g \in M_{2}(\mathbb{Z}) | \det g = N\}$ and for $M = \m{a&b\\c&d}, \tau \in \mathbb{H}$, we define
% \begin{equation*}
%     M\langle \tau\rangle := \frac{a\tau+b}{c\tau+d}.
% \end{equation*}
\begin{proposition}\label{adjoint}
The action of $V_{N}^{*}$, defined as the adjoint of $V_{N}$ with respect to the scalar product of Fourier--Jacobi forms, defined in \ref{inner product_fourier-jacobi}, is given by (for $\phi \in S_{k}(\mathbb{Z}^{n}, N\sigma)$)
\begin{equation*}
    \sum_{\substack{D \in \mathbb{Z}_{<0}, r \in \mathbb{Z}^{n}\\D \equiv \frac{1}{2}qS^{-1}[r] \mod qN}}c_{\phi}\left(D, r\right)e\left(\frac{\frac{1}{2}qS^{-1}[r]-D}{qN}\tau + r^{t} z\right) \longmapsto
\end{equation*}
\begin{multline*}
    \longmapsto \sum_{\substack{D<0, r \in \mathbb{Z}^{n}\\ D \equiv \frac{1}{2}qS^{-1}[r] \mod q}}\left(\sum_{d \mid N} d^{k-(n+1)} \sum_{\substack{s \mod dS\mathbb{Z}^{n}\\ D \equiv \frac{1}{2}qS^{-1}[s] \mod {qd}}}c_{\phi}\left(\left(\frac{N}{d}\right)^2D, \frac{N}{d}s\right)\right)\times\\\times e\left(\frac{\frac{1}{2}qS^{-1}[r]-D}{q}\tau+r^{t}z\right).
\end{multline*}
\end{proposition}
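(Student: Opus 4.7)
The plan is to compute the Petersson inner product $\langle V_N\phi,\psi\rangle$ for an arbitrary $\phi\in J_k(\mathbb{Z}^n,\sigma)$, with $\psi\in J_k(\mathbb{Z}^n,N\sigma)$ fixed, by a direct unfolding argument in the spirit of \cite{kohnen_skoruppa}, and then to identify the Fourier expansion of $V_N^{\ast}\psi$ as the unique element of $J_k(\mathbb{Z}^n,\sigma)$ whose pairing against every such $\phi$ recovers the computed value.

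First I would use the standard coset representatives
\begin{equation*}
SL_2(\mathbb{Z})\backslash M_2(\mathbb{Z})_N = \left\{\m{a & b\\ 0 & d} : ad=N,\ a,d>0,\ 0\leq b<d\right\}.
\end{equation*}
Since $c=0$ throughout, the exponential factor in the definition of $V_N$ vanishes and one gets
\begin{equation*}
V_N\phi(\tau,z) = N^{k-1}\sum_{ad=N}d^{-k}\sum_{b=0}^{d-1}\phi\!\left(\frac{a\tau+b}{d},\frac{Nz}{d}\right).
\end{equation*}
Substituting this into $\langle V_N\phi,\psi\rangle$, I would then apply the classical unfolding trick: the sum over $b\bmod d$ combines with the $\tau\mapsto\tau+m$ translations inside $J_\infty^{(\mathbb{Z}^n,N\sigma)}$ to enlarge the $\tau$-fundamental domain by the factor $d$, while the $L$-translations in the $z$-direction, rescaled by $d$, cover representatives $s$ of cosets of $L/dL$.

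Next I would insert the Fourier expansion of $\phi$ (with index $\sigma$, so admissible indices satisfy $D\equiv\tfrac{1}{2}qS^{-1}[r]\bmod q$) and of $\psi$ (with index $N\sigma$, so admissible indices satisfy $D'\equiv\tfrac{1}{2}qS^{-1}[r']\bmod qN$) into the unfolded integral. Orthogonality of the exponentials in the real parts of $\tau$ and $z$ pins down the matching $D'=(N/d)^2 D$ and $r'=(N/d)s$, for $s$ representing a coset in $L/dL$ that refines $r$; pulling back the admissibility condition for $\psi$ through these substitutions yields exactly the congruence $D\equiv\tfrac{1}{2}qS^{-1}[s]\bmod qd$ stated in the proposition. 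The integrations over the imaginary parts, together with the Jacobians from the changes of variable in the measure $v^{-n-2}du\,dv\,dx\,dy$ and the prefactors $N^{k-1}d^{-k}$, collapse to the constant $d^{k-(n+1)}$; the $-(n+1)$ here reflects the $n$ complex $z$-integrations, which are absent from the scalar case of \cite{kohnen_skoruppa}.

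The principal obstacle is the careful bookkeeping of the Heisenberg action when reinterpreting the Fourier series of $\psi$ (of index $N\sigma$) as indexed by the data relevant to $\sigma$. One must confirm, using Remark \ref{equality in coefficients}, that the cosets produced by the $L$-translations in $J_\infty^{(\mathbb{Z}^n,N\sigma)}$ reorganise exactly into the sum over $s\bmod dS\mathbb{Z}^n$ with the congruence above, and that no double-counting or missed indices occur. Once this bookkeeping is in hand together with the factor tracking, the expression resulting from the calculation, compared against the shape $\langle\phi,V_N^{\ast}\psi\rangle_{\sigma}$, immediately yields the stated Fourier coefficients of $V_N^{\ast}\psi$.
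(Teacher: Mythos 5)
Your plan is essentially the paper's own argument: the paper likewise computes the adjoint via the Kohnen--Skoruppa unfolding (upper-triangular coset representatives for $SL_2(\mathbb{Z})\backslash M_2(\mathbb{Z})_N$ together with the Heisenberg translations $X \bmod N\mathbb{Z}^{2n}$, cf. \cite[pages 554-556]{kohnen_skoruppa}) and then carries out exactly the Fourier-coefficient bookkeeping you describe, using Remark \ref{equality in coefficients} to reorganise the translation cosets into the sum over $s \bmod dS\mathbb{Z}^{n}$ with the congruence $D \equiv \frac{1}{2}qS^{-1}[s] \bmod qd$. The only difference is packaging --- the paper first records a closed-form expression for $V_N^{*}$ as a double sum of slash operators and then expands it, whereas you fold both steps into a single inner-product computation --- and the delicate point you flag (that the $L$-translations produce the classes mod $dS\mathbb{Z}^{n}$ without omission or double counting) is precisely the content of the final paragraph of the paper's proof.
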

\begin{remark}
    The $(D,r)$ coefficient of $V_{N}^{*}\phi$ is independent of $r$.
\end{remark}
\begin{proof}
We start by writing
\begin{equation*}
    V_{N}\phi = N^{k/2-1} \sum_{A \in \textup{SL}_2(\mathbb{Z}) \backslash M_{2}(\mathbb{Z})_{N}}\phi_{\sqrt{N}} |_{k,(\mathbb{Z}^{n},N\sigma)} \left(\frac{1}{\sqrt{N}}A\right),
\end{equation*}
where we define $\phi_{\sqrt{N}}(\tau, z) := \phi\left(\tau, \sqrt{N}z\right)$. We remind here that if $A = \m{a&b\\c&d} \in \textup{GL}_{2}^{+}(\mathbb{R})$, we use $(\det A)^{-1/2}A$ in the $\mid_{k}-$action defined in Definition \ref{k-actions}. \\

We can then follow the proof in \cite[pages 554-556]{kohnen_skoruppa} line by line and arrive at the result that the adjoint $V_{N}^{*} : S_{k}(\mathbb{Z}^{n}, N\sigma) \longrightarrow S_{k}(\mathbb{Z}^{n}, \sigma)$ is given by
\begin{equation*}
    \phi \longmapsto N^{k/2-2n-1} \sum_{X \Mod{N\mathbb{Z}^{2n}}} \sum_{A \in \textup{SL}_2(\mathbb{Z}) \backslash M_{2}(\mathbb{Z})_N} \phi_{\sqrt{N}^{-1}} |_{k,(\mathbb{Z}^{n},\sigma)} \left(\frac{1}{\sqrt{N}}A\right) |_{k,(\mathbb{Z}^{n},\sigma)} X.
\end{equation*}
Let us now compute the action on the Fourier coefficients. We choose a set of representatives for $\textup{SL}_2(\mathbb{Z}) \backslash M_{2}(\mathbb{Z})_N$ of the form $\m{a&b\\0&d}$ with $ad = N$ and $0\leq b < d$. We then obtain that the above expression can be written as
\begin{equation*}
    N^{k/2-2n-1}\sum_{\lambda, \mu \in \mathbb{Z}^{n}/N\mathbb{Z}^{n}} \sum_{\substack{ad=N\\b \in \mathbb{Z}/d\mathbb{Z}}} \left(\frac{d}{\sqrt{N}}\right)^{-k} \phi\left(\frac{a\tau+b}{d}, \frac{z+\lambda\tau +\mu}{d}\right)e^{\pi i \tau S[\lambda]+2\pi i \lambda^{t}Sz}.
\end{equation*}
By then substituting the Fourier expansion
\begin{equation*}
    \phi(\tau, z) = \sum_{\substack{D \in \mathbb{Z}_{<0}, r \in \mathbb{Z}^{n}\\D \equiv \frac{1}{2}qS^{-1}[r] \mod qN}}c_{\phi}\left(D, r\right)e\left(\frac{\frac{1}{2}qS^{-1}[r]-D}{qN}\tau + r^{t}z\right),
\end{equation*}
for $\phi$, the above can be written as
\begin{multline*}
    N^{k/2-2n-1}\sum_{\lambda, \mu \in \mathbb{Z}^{n}/N\mathbb{Z}^{n}} \sum_{\substack{ad=N\\b \in \mathbb{Z}/d\mathbb{Z}}}\left(\frac{d}{\sqrt{N}}\right)^{-k}\sum_{D,r}c_{\phi}(D, r) e\left(\left(\frac{\frac{1}{2}qS^{-1}[r]-D}{qN}\cdot \frac{a}{d} + \frac{S[\lambda]}{2}+\frac{r^t \lambda}{d}\right)\tau +\right.\\\left.+\left(\frac{r^t z}{d}+\lambda^{t}Sz\right) +\frac{\frac{1}{2}qS^{-1}[r]-D}{qN}\cdot\frac{b}{d} + \frac{r^t \mu}{d} \right).
\end{multline*}
Now, the term
\begin{equation*}
    \sum_{\substack{b \in \mathbb{Z}/d\mathbb{Z}\\\mu \in \mathbb{Z}^{n}/N\mathbb{Z}^{n}}}e\left(\frac{\frac{1}{2}qS^{-1}[r]-D}{qN}\cdot\frac{b}{d} + \frac{r^t \mu}{d}\right)
\end{equation*}
is zero unless 
\begin{equation*}
    d \mid \frac{\frac{1}{2}qS^{-1}[r]-D}{qN} \textup{ and } d \mid r,
\end{equation*}
meaning it divides all of its components. In that case, the sum equals $dN^{n}$. The conditions then imply that we can replace $r \longmapsto dr$ and $D \longmapsto Dd^2$. The last one follows from the fact that $N = ad$ and so $d \mid N$ as well. Hence, we obtain that the expression equals
\begin{multline*}
    N^{k-n-1}\sum_{\lambda \in \mathbb{Z}^{n}/N\mathbb{Z}^{n}}\sum_{d \mid N} d^{1-k} \sum_{\substack{D<0, r\in \mathbb{Z}^{n}\\ D \equiv \frac{1}{2}qS^{-1}[r] \Mod {\frac{qN}{d}}}}c_{\phi}(d^2D, dr)\times \\ \times e\left(\left(\frac{\frac{1}{2}qS^{-1}[r]+\frac{1}{2}qS[\lambda]+qr^t \lambda-D}{q}\right)\tau +r^{t} z+\lambda^{t}Sz\right)=
\end{multline*}
\begin{multline}\label{almost final expression adjoint}
    = N^{k-n-1}\sum_{d\mid N}d^{1-k} \sum_{\lambda \in \mathbb{Z}^{n}/N\mathbb{Z}^{n}}\sum_{\substack{D<0, r\in \mathbb{Z}^{n}\\ D \equiv \frac{1}{2}qS^{-1}[r - S \lambda] \Mod {\frac{qN}{d}}}}c_{\phi}(d^2D, d(r-S \lambda))\times\\\times e\left(\frac{\frac{1}{2}qS^{-1}[r]-D}{q}\tau + r^{t} z\right),
\end{multline}
after setting $r \longmapsto r - S \lambda$. Now, as in \cite[p. 557]{kohnen_skoruppa}, we set 
\begin{equation}\label{lambda mod expression}
    \lambda \equiv t + \frac{N}{d}t' \Mod{N\mathbb{Z}^{n}},
\end{equation}
with $t \Mod {(N/d)\mathbb{Z}^n}$ and $t' \Mod {d\mathbb{Z}^{n}}$. We then have
\begin{equation*}
    S^{-1}(d(r-S \lambda)) \equiv S^{-1}(d(r - S t)) \mod N\mathbb{Z}^{n},
\end{equation*}
and
\begin{equation*}
    D \equiv \frac{1}{2}qS^{-1}[r - S t] \mod \frac{qN}{d}.
\end{equation*}
The first property can be seen easily and the second follows from the fact that we already have $\displaystyle{D \equiv \frac{1}{2}qS^{-1}[r - S \lambda] \mod \frac{qN}{d}}$ and also we can check
\begin{equation*}
    \frac{1}{2}qS^{-1}[r - S t] - \frac{1}{2}qS^{-1}[r - S \lambda] = \frac{1}{2}q[2r^t (\lambda-t) + S[t] - S[\lambda]].
\end{equation*}
So, it suffices to show that
\begin{equation*}
    \frac{1}{2}S[t] \equiv \frac{1}{2}S[\lambda] \mod {\frac{N}{d}}.
\end{equation*}
But this follows after we write $\lambda = t + N\alpha/d$ with $\alpha \in \mathbb{Z}^{n}$, from \eqref{lambda mod expression}.\\

Hence, because of Remark \ref{equality in coefficients}, expression \eqref{almost final expression adjoint} becomes (after replacing $d$ with $N/d$)
\begin{multline*}
    \sum_{d \mid N} d^{k-(n+1)} \sum_{t \in \mathbb{Z}^{n}/d\mathbb{Z}^{n}} \sum_{\substack{D<0, r \in \mathbb{Z}^{n}\\D \equiv \frac{1}{2}qS^{-1}[r-S t]\mod qd}}c_{\phi}\left(\left(\frac{N}{d}\right)^2D, \frac{N}{d}(r-S t)\right)\times \\\times e\left(\frac{\frac{1}{2}qS^{-1}[r]-D}{q}\tau + r^{t} z\right).
\end{multline*}
What is left to prove now (after fixing $D, r$ with the appropriate conditions) is
\begin{equation*}
    \sum_{d \mid N} d^{k-(n+1)} \sum_{\substack{t \in \mathbb{Z}^{n}/d\mathbb{Z}^{n}\\D \equiv \frac{1}{2}qS^{-1}[r-S t]\mod qd}}c_{\phi}\left(\left(\frac{N}{d}\right)^2D, \frac{N}{d}(r-S t)\right)=
\end{equation*}
\begin{equation*}
    = \sum_{d \mid N} d^{k-(n+1)} \sum_{\substack{t \mod dS\mathbb{Z}^{n}\\ D \equiv \frac{1}{2}qS^{-1}[t] \mod {qd}}}c_{\phi}\left(\left(\frac{N}{d}\right)^2D, \frac{N}{d}t\right).
\end{equation*}
This follows by setting $u = r - S t$ and then observing that $\displaystyle{D \equiv \frac{1}{2}qS^{-1}[u] \mod qd}$, 
\begin{equation*}
    r - S t \equiv r - S  t' \mod dS\mathbb{Z}^{n} \iff t \equiv t' \mod d\mathbb{Z}^{n}
\end{equation*}
and by using the fact that $c_{\phi}(D, s) = c_{\phi}(D, s')$ if $s \equiv s' \mod NS\mathbb{Z}^{n}$ (see Remark \ref{equality in coefficients}). So we can consider the entries $r-S t$ $\Mod {NS\mathbb{Z}^{n}}$ and all of these are different $\Mod {dS\mathbb{Z}^{n}}$.\qedhere\end{proof}
% \\
% \textcolor{red}{Let us now explain the proof of the last line in Kohnen-Skoruppa. We want to show
% \begin{equation*}
%     \sum_{d \mid N} d^{k-2} \sum_{s (d)} \sum_{\substack{D<0, r \in \mathbb{Z}\\D \equiv (r-2s)^2 \mod 4d}} c_{\phi}\left(\left(\frac{N}{d}\right)^2D, \frac{N}{d}(r-2s)\right)e\left(\frac{r^2-D}{4}\tau + rz\right)=
% \end{equation*}
% \begin{equation*}
%     = \sum_{\substack{D<0, r \in \mathbb{Z}\\D \equiv r^2 (4)}}\left(\sum_{d \mid N} d^{k-2} \sum_{\substack{s(2d)\\s^2 \equiv D (4d)}}c_{\phi}\left(\left(\frac{N}{d}\right)^2D, \frac{N}{d}s\right)\right)e\left(\frac{r^2-D}{4}\tau + rz\right)=
% \end{equation*}
% By fixing $D<0, r \in \mathbb{Z}$ such that $D \equiv r^2 \mod 4$, we equivalently need to show
% \begin{equation*}
%     \sum_{d \mid N} d^{k-2} \sum_{\substack{s(d)\\D \equiv (r-2s)^2 \mod 4d}} c_{\phi}\left(\left(\frac{N}{d}\right)^2D, \frac{N}{d}(r-2s)\right) = 
% \end{equation*}
% \begin{equation*}
%     =\sum_{d \mid N} d^{k-2} \sum_{\substack{s(2d)\\s^2 \equiv D (4d)}}c_{\phi}\left(\left(\frac{N}{d}\right)^2D, \frac{N}{d}s\right)
% \end{equation*}
% But this just follows from the fact that if we set $t = r-2s$ then we observe that $t^2 \equiv D \mod 4d$ and also
% \begin{equation*}
%     r-2s \equiv r-2s' \mod 2d \iff s \equiv s' \mod d
% \end{equation*}
% and the fact that $c_{\phi}(D, r) = c_{\phi}(D, r')$ whenever $r \equiv r' \mod 2N$. These observations then mean that we can consider the entries $r-2s \mod 2d$ and we have that all of them are different as $s$ runs $\mod d$}.
\section{Poincar\'e Series}\label{Poincar\'e}
In this Section, we define a special class of Fourier--Jacobi forms, called Poincar\'e series, which reproduce the Fourier coefficients of Jacobi forms under the Petersson scalar product. What is more, they generate the space of cusp forms and in turn the Maass space, as we will show in the next Section.
\begin{defn}\label{support}
    We define the support of the lattice $L=\mathbb{Z}^n$ with respect to the bilinear form $\sigma(x,y)=x^{t}Sy$ for $x,y \in V$ to be
    \begin{equation*}
        \textup{supp}(L, \sigma) := \left\{(D,r) \mid D \in \mathbb{Q}_{\leq 0}, r \in L^{*}, D \equiv \frac{1}{2}S[r] \pmod {\mathbb{Z}}\right\}.
    \end{equation*}
Now, if we write $r \longmapsto S^{-1}r$ with $r \in L$, we get
\begin{equation*}
    qD \equiv \frac{1}{2}qS^{-1}[r] \pmod {q\mathbb{Z}},
\end{equation*}
which then implies $D \in \frac{1}{q}\mathbb{Z}$. Hence, by writing $D \longmapsto D/q$, we get that equivalently 
\begin{equation*}
    \textup{supp}(L, \sigma) = \left\{(D/q,S^{-1}r) \mid D \in \mathbb{Z}_{\leq 0}, r \in L, D \equiv \frac{1}{2}qS^{-1}[r] \pmod {q}\right\}.
\end{equation*}
Let then
\begin{equation*}
    \widetilde{\textup{supp}}(L, \sigma) :=  \{(D,r) \in \mathbb{Z}_{\leq 0}\times L \mid (D/q, S^{-1}r) \in \textup{supp}(L, \sigma)\}.
\end{equation*}
and in the following this is the set we will use.
\end{defn}
\begin{defn}
    Let $(D,r) \in \widetilde{\textup{supp}}(L, \sigma)$. We then define the following complex valued function on $\mathbb{H} \times (L \otimes \mathbb{C})$:
    \begin{equation*}
        g_{D,r}(\tau,z) := e\left(\frac{\frac{1}{2}qS^{-1}[r]-D}{q}\tau + r^{t}z\right),
    \end{equation*}
    where $q$ is the level of $L$ (see Definition \ref{level}).
\end{defn}
\begin{defn}
    Let $(D,r) \in \widetilde{\textup{supp}}(L, \sigma)$ and set
    \begin{equation*}
        J^{(L, \sigma)}_{\infty}:=\left\{\left(\m{1&n\\0&1},(0,\mu)\right)\mid n\in \mathbb{Z}, \mu \in L\right\}.
    \end{equation*}
    The \textbf{Poincar\'e series} of weight $k$ for the lattice $(L, \sigma)$ is defined by
    \begin{equation*}
        P_{k,D,r}(\tau,z) := \sum_{\gamma \in J_{\infty}^{(L,\sigma)} \backslash J^{(L,\sigma)}} g_{D,r} \mid _{k, (L,\sigma)} \gamma (\tau,z).
    \end{equation*}
    If $k > n/2+2$, then $P_{k,D,r}$ is absolutely and uniformly convergent on compact subsets of $\mathbb{H} \times (L \otimes \mathbb{C})$ and defines an element of $S_k(L, \sigma)$ (see \cite[Theorem 2.3, (i)]{jacobi_lattice}). Moreover, from the same Theorem, it only depends on $S^{-1}r\pmod L $.
\end{defn}
We now have the following very important property, which can be again found in \cite[Theorem 2.3, (i)]{jacobi_lattice}.
\begin{proposition}\label{Poincar\'e_property}
    Let $(D,r) \in \widetilde{\textup{supp}}(L, \sigma)$. Then for any $f \in S_{k}(L,\sigma)$ with 
    \begin{equation*}
    f(\tau, z) = \sum_{\substack{D' \in \mathbb{Z}_{<0}, r' \in L\\\frac{1}{2}qS^{-1}[r'] \equiv D' \pmod{q}}}c_{f}\left(D', r'\right)e\left(\frac{\frac{1}{2}qS^{-1}[r']-D'}{q}\tau + (r')^{t} z\right),
    \end{equation*}
    we have
    \begin{equation*}
        \langle f, P_{k,D,r} \rangle = \lambda_{k,D}c_{f}(D, r),
    \end{equation*}
    for some constant $\lambda_{k,D} \in \mathbb{C}$ depending on $k$ and $D$.
\end{proposition}
\section{Dirichlet Series and relation with Fourier coefficients}\label{new section}
In this Section, we define the Dirichlet series of interest. Let $\widetilde{\Gamma}_S \leq \Gamma \leq \Gamma_S$ be a subgroup. Let $F, G \in S_k(\Gamma)$ with corresponding Fourier--Jacobi coefficients $\{\phi_m\}_{m=1}^{\infty}, \{\psi_m\}_{m=1}^{\infty}$ respectively (see \eqref{fourier-jacobi expansion_og}). We define the Dirichlet series
\begin{equation}\label{dirichlet_defn}
    \mathcal{D}_{F,G}(s) := \sum_{N=1}^{\infty}\langle \phi_N, \psi_N\rangle N^{-s},
\end{equation}
where $\langle \ , \ \rangle$ denotes the inner product of Definition \ref{inner product_fourier-jacobi}. We then have the following Lemma.
\begin{lemma}\label{convergence_dirichlet}
    $\mathcal{D}_{F,G}(s)$ converges absolutely for $\textup{Re}(s) > k+1$ and represents a holomorphic function in this domain.
\end{lemma}
\begin{proof}
    The proof is similar to \cite[Lemma 1]{kohnen_skoruppa}. We will show for $N \geq 1$ that
    \begin{equation*}
        \left \langle \phi_N, \psi_N \right \rangle = \mathcal{O}(N^{k}),
    \end{equation*}
    with the constant depending only on $F, G$. Indeed, fix $(\tau, z) \in \mathbb{H} \times \mathbb{C}^{n}$ and write $\tau = u+iv, \textup{ } z = x+iy$. If $F(q) = \sum_{N=1}^{\infty}\phi_{N}(\tau,z)q^{N}$, with $q=e^{2\pi i \tau'}$, we have by Cauchy's integral formula that
    \begin{equation*}
        \phi_{N}(\tau,z) = \oint_{|q|=r}\frac{F(q)}{q^{N+1}}\textup{d}q,
    \end{equation*}
    for any $0 < r < e^{-\pi S[y]/v}$. The bounds follow from the fact that $S_0[\textup{Im}Z]>0$ (here $Z = (\tau',z,\tau) \in \mathcal{H}_S$). If now $\tau' = u'+iv'$, the integral can be written as
    \begin{equation*}
        \phi_{N}(\tau,z) = \int_{0}^{1}F(Z)e^{-2\pi i N \tau'}\textup{d}u',
    \end{equation*}
    for any $v' > S[y]/2v$. But now $\left|F(Z)\right|\left(S_0[\textup{Im}Z]/2\right)^{k/2}$ is bounded on $\mathcal{H}_S$ from \cite[II, Lemma 3.28]{hauffe21}, say by a constant $C>0$. Therefore, after choosing $v'=S[y]/2v+1/N$, we have
    \begin{equation*}
        \left|\phi_{N}(\tau,z)\right| \leq Ce^{2\pi}\int_{0}^{1}\left(S_0[\textup{Im}Z]/2\right)^{-k/2}e^{\pi N S[y]/v}\textup{d}u' = Ce^{2\pi}\left(\frac{v}{N}\right)^{-k/2}e^{ \pi N S[y]v}.
    \end{equation*}
    Similarly for $\psi_N$ and then the claim follows from the definition of the inner product in Definition \ref{inner product_fourier-jacobi}.
\end{proof}
From now on, we let $k>n/2+2$ be an \textbf{even} integer. Let then $F \in S_k(\Gamma_S)$ and write $\phi_N$ for its Fourier--Jacobi coefficients. For $(D,r) \in \widetilde{\textup{supp}}(L, \sigma)$, let also
    \begin{equation}\label{poincare in maass}
    \mathcal{P}_{k,D,r}(\tau', z,\tau) := \sum_{N \geq 1}V_{N}P_{k,D,r}(\tau,z)e(N\tau').
    \end{equation}
    From \cite[Corollary 6.7]{sugano_lifting}, we have that $\mathcal{P}_{k,D,r} \in S_{k}^{*}(\widetilde{\Gamma}_S)$ (see Definition \ref{maass space}). Here, by abusing notation, we write $P_{k,D,r}$ to actually denote the Poincar\'e series $\displaystyle{\frac{1}{\lambda_{k,D}}P_{k,D,r}}$, with the quantities defined in Proposition \ref{Poincar\'e_property}. Then, we have the Dirichlet series
    \begin{equation*}
        \mathcal{D}_{F,\mathcal{P}_{k,D,r}}(s) := \sum_{N=1}^{\infty} \langle \phi_{N}, V_{N}P_{k,D,r}\rangle N^{-s} = \sum_{N=1}^{\infty} \langle V_{N}^{*}\phi_{N}, P_{k,D,r}\rangle N^{-s}.
    \end{equation*}
\begin{remark}
Even though $F$ and $\mathcal{P}_{k,D,r}$ are taken invariant with respect to different modular groups, we note here that the proof of Lemma \ref{convergence_dirichlet} is still valid, because $F \in S_k(\Gamma_{S}) \subset S_k(\widetilde{\Gamma}_S)$.
\end{remark}
\begin{remark}\label{maass space generators}
A corollary of \cite[Theorem 2.3 (i)]{jacobi_lattice} is that $S_k(L, \sigma)$ is generated by $\left\{P_{k,D,r} \mid (D,r) \in \widetilde{\textup{supp}}(L, \sigma), \ S^{-1}r \pmod L\right\}$ (cf. \cite[Corollary 2.4]{jacobi_lattice}). This observation, together with \cite[Corollary 6.7]{sugano_lifting} gives us that the Maass space $S_{k}^{*}(\widetilde{\Gamma}_S)$ is generated by $\left\{\mathcal{P}_{k,D,r} \mid (D,r) \in \widetilde{\textup{supp}}(L, \sigma), \ S^{-1}r \pmod L\right\}$. It is therefore enough to consider $G$ to be a Poincar\'{e} series in \eqref{dirichlet_defn}.
\end{remark}
We will now relate the $N$th term of  $\mathcal{D}_{F,\mathcal{P}_{k,D,r}}(s)$ with the Fourier coefficients of $F$.
\begin{proposition}\label{inner_product}
    With the notation as above and $N \geq 1$ we have:
    \begin{equation*}
  \langle V_N^* \phi_N, P_{k,D,r} \rangle   = \sum_{d \mid N} d^{k-(n+1)} \sum_{\substack{s \mod dS\mathbb{Z}^{n}\\ D \equiv \frac{1}{2}qS^{-1}[s] \mod {qd}}}A\left(\frac{N}{d}\left(\frac{\frac{1}{2}qS^{-1}[s]-D}{qd}, S^{-1}s, d\right)\right).
\end{equation*}
\end{proposition}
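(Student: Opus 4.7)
The plan is to assemble the statement from three ingredients already in place: the reproducing property of the (renormalised) Poincaré series, the explicit formula for the adjoint operator $V_N^{*}$ from Proposition \ref{adjoint}, and the translation between Fourier-Jacobi coefficients $c_{\phi_N}$ and the orthogonal Fourier coefficients $A(\tilde r)$ derived in the paragraph immediately preceding this proposition.

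First I would apply the reproducing property. Since $P_{k,D,r}$ in the statement really denotes the normalised Poincaré series $\lambda_{k,D}^{-1}P_{k,D,r}$ of Section \ref{Poincar\'e}, Proposition \ref{Poincar\'e_property} yields
\begin{equation*}
\langle V_N^{*}\phi_N,\, P_{k,D,r}\rangle = c_{V_N^{*}\phi_N}(D,r).
\end{equation*}
This converts the inner product into the reading off of a single Fourier coefficient of $V_N^{*}\phi_N$.

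Next I would invoke Proposition \ref{adjoint} to identify that coefficient. Applied to $\phi=\phi_N\in J_k(\mathbb{Z}^n,N\sigma)$, it gives
\begin{equation*}
c_{V_N^{*}\phi_N}(D,r) = \sum_{d\mid N}d^{k-(n+1)}\sum_{\substack{s \bmod dS\mathbb{Z}^n\\ D\equiv \frac{1}{2}qS^{-1}[s]\bmod qd}} c_{\phi_N}\!\left(\tfrac{N^2}{d^2}D,\,\tfrac{N}{d}s\right),
\end{equation*}
and the remark after that proposition reassures that the right-hand side is indeed independent of $r$, which is consistent with the form of our claim.

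Finally I would plug in the identity
\begin{equation*}
c_{\phi_N}\!\left(\tfrac{N^2}{d^2}D,\,\tfrac{N}{d}s\right) = A\!\left(\tfrac{N}{d}\!\left(\tfrac{\frac{1}{2}qS^{-1}[s]-D}{qd},\,S^{-1}s,\,d\right)\right),
\end{equation*}
established just above the proposition by comparing the Fourier-Jacobi expansion of $F$ written in terms of $A(n,r,N)$ with the expansion written in terms of $c_{\phi_N}(D',r')$ (under the substitutions $r\mapsto \frac{N}{d}s$ and $D\mapsto \frac{N^2}{d^2}D$). Substituting this in completes the chain of equalities and gives exactly the claimed expression.

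The whole argument is really a careful bookkeeping exercise that stitches together earlier results; no new analytic estimate is needed. The only mild obstacle is keeping track of the two different normalisations simultaneously, namely the abuse of notation that renormalises $P_{k,D,r}$ by $\lambda_{k,D}^{-1}$ (so that the $\lambda_{k,D}$ in Proposition \ref{Poincar\'e_property} disappears), and the factor $\tfrac{N}{d}$ that multiplies the entire vector $\bigl(\tfrac{\frac{1}{2}qS^{-1}[s]-D}{qd},\,S^{-1}s,\,d\bigr)$ in the final Fourier coefficient. Once these normalisations are pinned down, the proof is a direct substitution.
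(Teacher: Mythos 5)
Your proposal is correct and follows exactly the paper's own route: the paper's proof is the one-line observation that the claim follows from Proposition \ref{adjoint} and Proposition \ref{Poincar\'e_property}, combined with the coefficient identity $c_{\phi_N}\left(\tfrac{N^2}{d^2}D,\tfrac{N}{d}s\right)=A\left(\tfrac{N}{d}\left(\tfrac{\frac{1}{2}qS^{-1}[s]-D}{qd},S^{-1}s,d\right)\right)$ derived just before the statement. You have simply written out the same chain of substitutions in full, including the correct handling of the $\lambda_{k,D}^{-1}$ normalisation.
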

\begin{proof}
    % Let us now relate the coefficients $c_{\phi_N}\left(\frac{N^2}{d^2}D, \frac{N}{d}s\right)$ with the Fourier coefficients of $F$. 
$F$ admits a Fourier expansion of the form (see equation \eqref{orthogonal_fourier_expansion})
\begin{equation*}
    F(Z) = \sum_{\tilde{r} \in L_0^{*}}A(\tilde{r})e({\widetilde{r}}^{t}S_0Z) = \sum_{N=1}^{\infty} \phi_{N}(\tau, z)e(N\tau'),
\end{equation*}
where $Z = (\tau', z, \tau) \in \mathcal{H}_S$ and $\tilde{r} = (m, r, N)$ with $r \in L^{*}$. We can then write 
\begin{equation*}
    \phi_{N}(\tau, z) = \sum_{m \in \mathbb{Z}, r \in L^{*}}A(m,r,N)e(m\tau - r^{t}Sz) = \sum_{m \in \mathbb{Z}, r \in \mathbb{Z}^{n}}A(m, S^{-1}r, N)e(m\tau-r^{t} z).
\end{equation*}
But now $\m{-1_2 &&\\&1_n&\\&&-1_2} \in \Gamma_{S}$ and therefore if $Z=(\tau',z,\tau) \in \mathcal{H}_S$, we have
\begin{equation*}
    F\left((\tau',-z,\tau)\right) = (-1)^kF((\tau',z,\tau)) = F((\tau',z,\tau)),
\end{equation*}
which then implies $A(m, r, N) = A(m, -r, N)$ for all $m, N \in \mathbb{Z}, \ r \in L^{*}$. Therefore, after setting $r \longmapsto -r$, we can re-write the above as
\begin{equation}\label{fourier-jacobi-fourier1}
    \phi_{N}(\tau, z) = \sum_{m \in \mathbb{Z}, r \in \mathbb{Z}^{n}}A(m, S^{-1}r, N)e(m\tau+r^{t} z).
\end{equation}
Now, a priori, we can write  
\begin{equation}\label{expansion fourier-jacobi2}
    \phi_N(\tau, z) = \sum_{D,r}c_{\phi_{N}}\left(D,r\right)e\left(\frac{\frac{1}{2}qS^{-1}[r]-D}{qN}\tau+r^tz\right),
\end{equation}
with $(D,r)$ as in equation \eqref{fourier-jacobi expansion}. 
Hence, from Proposition \ref{adjoint} and Proposition \ref{Poincar\'e_property}, we have
\begin{align*}
    \langle V_N^* \phi_N, P_{k,D,r} \rangle &= \sum_{d \mid N} d^{k-(n+1)} \sum_{\substack{s \Mod {dS\mathbb{Z}^{n}}\\ D \equiv \frac{1}{2}qS^{-1}[s] \Mod {qd}}}c_{\phi}\left(\left(\frac{N}{d}\right)^2D, \frac{N}{d}s\right) \\&=\sum_{d \mid N} d^{k-(n+1)} \sum_{\substack{s \Mod {dS\mathbb{Z}^{n}}\\ D \equiv \frac{1}{2}qS^{-1}[s] \Mod {qd}}}A\left(\frac{N}{d}\left(\frac{\frac{1}{2}qS^{-1}[s]-D}{qd}, S^{-1}s, d\right)\right),
\end{align*}
because after setting $r \longmapsto Ns/d, D \longmapsto N^2D/d^2$ in \eqref{expansion fourier-jacobi2}, we obtain from \eqref{fourier-jacobi-fourier1}
\begin{equation*}
    c_{\phi_{N}}\left(\frac{N^2}{d^2}D, \frac{N}{d}s\right) = A\left(\frac{N}{d}\left(\frac{\frac{1}{2}qS^{-1}[s]-D}{qd}, S^{-1}s, d\right)\right).\qedhere
\end{equation*}
\end{proof}
\section{Relation to the class number}\label{relation to the class number}
In this Section, our goal is to bring $\mathcal{D}_{F, \mathcal{P}_{k,D,r}}(s)$ into a form similar to the one in \cite[page 553]{kohnen_skoruppa}. We first need some definitions of the adelized groups and of genus and classes of lattices.\\\\
Let $V$ denote any finite dimensional vector space over $\mathbb{Q}$ of dimension $n \geq 1$. For each prime $p$ (including infinity), we define $V_p := V \otimes_{\mathbb{Q}} \mathbb{Q}_p$. For a $\mathbb{Z}$-lattice $L$ in $V$, we denote by $L_p:= L \otimes_{\mathbb{Z}}\mathbb{Z}_p$. This coincides with the $\mathbb{Z}_p$-linear span of $L$ in $V_p$.
\begin{proposition}\label{lattices}
Let $L$ be a fixed $\mathbb{Z}$-lattice in $V$. Then, the following are true:
\begin{itemize}
    \item If $M$ is another $\mathbb{Z}$-lattice, then $L_p = M_p$ for almost all $p$. Moreover, $L \subset M$ if $L_p \subset M_p$ for all $p$ and $L=M$ if $L_p = M_p$ for all $p$.
    \item For all $p<\infty$, let $N_p \in V_p$ denote a $\mathbb{Z}_p$-lattice such that $N_p = L_p$ for almost all $p<\infty$. Then, there is a $\mathbb{Z}$-lattice $M$ in $V$ such that $M_p = N_p$ for all $p < \infty$.
\end{itemize}
\begin{proof}
    See \cite[Lemma 9.2]{shimura2004arithmetic}.
\end{proof}
\end{proposition}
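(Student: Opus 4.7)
My plan is to reduce both assertions to the fundamental local-global identity $L = \bigcap_{p<\infty} L_p$, where the intersection is taken inside $V$ via the diagonal embeddings $V \hookrightarrow V_p$. This identity is the lattice-theoretic manifestation of $\mathbb{Z} = \bigcap_p \mathbb{Z}_p$ inside $\mathbb{Q}$, combined with the fact that a $\mathbb{Z}$-lattice is finitely generated and spans $V$ over $\mathbb{Q}$.

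For the first bullet, I would fix $\mathbb{Z}$-bases $(e_i)_{i=1}^{n}$ of $L$ and $(f_j)_{j=1}^{n}$ of $M$ and write $e_i = \sum_j a_{ij} f_j$ with $a_{ij} \in \mathbb{Q}$, together with the inverse expansion $f_j = \sum_i b_{ji} e_i$. Only finitely many primes divide the denominators of the $a_{ij}$ or the $b_{ji}$; away from this finite exceptional set both change-of-basis matrices lie in $\mathrm{GL}_n(\mathbb{Z}_p)$, hence $L_p = M_p$. The implication ``$L \subset M \Rightarrow L_p \subset M_p$ for all $p$'' is tautological, and the converse follows by noting that if $L_p \subset M_p$ for every $p$, then any $x \in L$ lies in $M_p \cap V$ for every $p$, hence in $\bigcap_p (M_p \cap V) = M$. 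The equality case is obtained by applying this in both directions.

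For the second bullet, the strategy is to set $M := \bigcap_{p<\infty} (N_p \cap V)$. Let $T$ be the finite set of primes at which $N_p \neq L_p$. For each $p \in T$, since both $N_p$ and $L_p$ are $\mathbb{Z}_p$-lattices of full rank in $V_p$, there exists $k_p \geq 0$ with $p^{k_p} L_p \subset N_p \subset p^{-k_p} L_p$. Setting $c := \prod_{p \in T} p^{k_p}$, I obtain sandwich inclusions $cL \subset M \subset c^{-1} L$, so $M$ is a finitely generated $\mathbb{Z}$-submodule of $V$ that spans $V$ over $\mathbb{Q}$, i.e.\ a $\mathbb{Z}$-lattice. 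What remains is to verify $M_p = N_p$ for every prime $p$.

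The main obstacle is precisely this last verification: showing that the localisation of the global intersection agrees prime-by-prime with the prescribed $N_p$. The containment $M_p \subset N_p$ is immediate since $M \subset N_p$ inside $V_p$. For the reverse direction, given $y \in N_p$ I need to produce an element $x \in V$ which is $p$-adically close to $y$ and simultaneously lies in $N_q$ for every $q \neq p$; then $y - x \in p^{N} L_p$ for arbitrary $N$ will force $y \in M_p$. This is a CRT-type strong-approximation step: first approximate $y$ by an element of the $\mathbb{Q}$-span using density of $V$ in $V_p$, then correct the resulting element at each of the finitely many ``bad'' primes in $T \setminus \{p\}$ by adding elements of $cL$, which modifies only those localisations without disturbing the $p$-adic approximation. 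Once this bookkeeping with denominators is handled carefully, the verification $M_p = N_p$ completes the proof.
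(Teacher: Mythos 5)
The paper does not actually prove this proposition; it simply cites \cite[Lemma 9.2]{shimura2004arithmetic}. So your self-contained argument is not competing with anything in the text, and it is essentially the standard proof underlying Shimura's lemma. Your first bullet (finitely many primes divide the denominators of the two change-of-basis matrices; the identity $M=\bigcap_{p<\infty}(M_p\cap V)$ coming from $\bigcap_p\mathbb{Z}_{(p)}=\mathbb{Z}$) is correct, as are the definition $M:=\bigcap_{p<\infty}(N_p\cap V)$, the sandwich $cL\subset M\subset c^{-1}L$ showing $M$ is a lattice, and the inclusion $M_p\subset N_p$.

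The one step that fails as literally described is the final correction. You propose to approximate $y\in N_p$ by some $x\in V$ $p$-adically and then ``correct $x$ at each bad prime $q\in T\setminus\{p\}$ by adding elements of $cL$.'' But $cL\subset cL_q\subset N_q$ for every $q$, so adding an element $z\in cL$ can never repair a failure of membership: if $x\notin N_q$ then $x+z\notin N_q$ as well. (Also, adding a global element perturbs \emph{every} localisation, not only the targeted ones.) The repair is to impose all congruences simultaneously rather than sequentially: write $y=\sum_i y_ie_i$ in a $\mathbb{Z}$-basis $(e_i)$ of $L$, so that $y_i\in p^{-k_p}\mathbb{Z}_p$, and choose by the Chinese Remainder Theorem integers $m_i$ with $m_i\equiv p^{k_p}y_i \pmod{p^{N+k_p}}$ and $m_i\equiv 0 \pmod{\prod_{q\in T\setminus\{p\}}q^{k_q}}$. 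Then $x:=\sum_i p^{-k_p}m_ie_i$ lies in $L_q=N_q$ for $q\notin T$, in $q^{k_q}L_q\subset N_q$ for $q\in T\setminus\{p\}$, and satisfies $y-x\in p^{N}L_p$; for $N\ge k_p$ this also forces $x\in N_p$, hence $x\in M$ and $y=x+(y-x)\in M_p+p^{k_p}L_p\subset M_p$. With that substitution your proof closes the gap and is, for all practical purposes, the argument behind the cited lemma.
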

\begin{defn}
    We define the adelizations $V$ and $\textup{GL}(V)$, as follows:
    \begin{equation*}
        V_{\mathbb{A}} = \left\{v \in \prod_{p\leq \infty}V_p \mid v_p \in L_p \textup{ for almost all } p < \infty\right\},
    \end{equation*}
    \begin{equation*}
        \textup{GL}(V)_{\mathbb{A}} = \left\{\alpha \in \prod_{p\leq \infty}\textup{GL}(V_p) \mid \alpha_pL_p = L_p \textup{ for almost all } p < \infty\right\}.
    \end{equation*}
\end{defn}
Here $L$ is an arbitrary $\mathbb{Z}$-lattice, and the above definitions do not depend on the choice of it, by virtue of Proposition \ref{lattices}.
\begin{remark}
When we identify $V$ with $\mathbb{Q}^{n}$, we identify $\textup{GL}(V)$ with $\textup{GL}_{n}(\mathbb{Q})$. Then, we identify $\textup{GL}_{n}(\mathbb{Q})_{\mathbb{A}}$ with $\textup{GL}_{n}(\mathbb{Q}_{\mathbb{A}})$, where $\mathbb{Q}_{\mathbb{A}}$ is the usual ring of adeles.
\end{remark}
\begin{remark}
    Given $x \in \textup{GL}(V)_{\mathbb{A}}$, we have that $x_pL_{p} = L_p$ for almost all $p < \infty$. By Proposition \ref{lattices}, there is a $\mathbb{Z}$-lattice $M$ such that $M_p = x_pL_{p}$ for all $p$. We denote this lattice by $xL$. Hence, $xL$ is the lattice which has the property $(xL)_p = x_pL_p$ for all $p < \infty$.
\end{remark}
\begin{defn}
    Let $G \subset \hbox{GL}_{n}(\mathbb{Q})$ any algebraic linear group. For any field $K$ containing $\mathbb{Q}$, we denote by $G_{K}$ the group of $K$-rational points in $G$. For any prime number $p$, we abbreviate by $G_p$ the group $G_{\mathbb{Q}_p}$. Let also $G_{\infty}$ denote $G_{\mathbb{R}}$. We then define
    \begin{equation*}
        G_{\mathbb{A}} := \left\{x \in \prod_{p\leq \infty} G_p \mid x_pL_p = L_p \textup{ for almost all } p\right\}.
    \end{equation*}
    Moreover, we use the notation $G_{\mathbb{A},f}$ to denote the finite part of $G_{\mathbb{A}}$.
\end{defn}
\begin{defn}
    Let $L$ denote a $\mathbb{Z}$-lattice in $V$ and $G \subset \hbox{GL}_n(\mathbb{Q})$ any algebraic group. Then, for any $x \in G$, $xL$ is also a $\mathbb{Z}$-lattice in $V$. We define the \textbf{class} of $L$ to be $\{xL \mid x \in G\}$. Similarly, for any $x \in G_{\mathbb{A}}$, $xL$ is again a $\mathbb{Z}$-lattice in $V$. The set $\{xL \mid x \in G_{\mathbb{A}}\}$ is called the \textbf{genus} of $L$.
\end{defn}
Now, the genus of $L$ can be decomposed into disjoint union of classes. If $C = \{x \in G_{\mathbb{A}} \mid xL=L\}$, the map $xC \longmapsto xL$ gives a bijection between $G_{\mathbb{A}}/C$ and the genus of $L$, so gives a bijection between $G\backslash G_{\mathbb{A}}/C$ and the set of classes contained in the genus of $L$. In general, if $U$ is an open subgroup of $G_{\mathbb{A}}$, we call the number $\# (G\backslash G_{\mathbb{A}}/U)$ the \textbf{class number} of $G$ with respect to $U$. \\\\
Now, with all the main definitions out of the way, we want to deal with the vectors appearing in Proposition \ref{inner_product}. For a $\mathbb{Z}$-lattice $\Lambda$ in $V_0$, $\mathfrak{b}$ a fractional ideal of $\mathbb{Q}$ and $q \in \mathbb{Q}^{\times}$, we define
    \begin{equation*}
        \Lambda[q,\mathfrak{b}] := \left\{x \in V_0 \mid \phi_0[x] = q \textup{ and } \phi_0(x, \Lambda) = \mathfrak{b}\right\}.
    \end{equation*}
We then have the following Lemma.
\begin{lemma}\label{equivalence_classes}
    Let $D \in \mathbb{Z}_{\leq 0}$ and define the set
    \begin{equation*}
        \Xi_{d}:= \left\{ \xi = \left(\frac{\frac{1}{2}qS^{-1}[s]-D}{qd}, S^{-1}s, d\right)^{t} \mid  s \mod dS\mathbb{Z}^{n}, \ D \equiv \frac{1}{2}qS^{-1}[s] \mod {qd}\right\}.
    \end{equation*}
    We then have
    \begin{equation*}
        \Xi_{d} \subset L_0\left[-\frac{D}{q}, \frac{1}{2}\mathbb{Z}\right]
    \end{equation*}
    for all $d \geq 1$ coprime to $D$.
\end{lemma}
\begin{proof}
    Firstly, for any vector $\xi \in \Xi_{d}$, we can directly compute $\phi_0[\xi] = -D/q$. It now remains to show that the $\mathbb{Z}$-ideal
    \begin{equation*}
        \left\{\xi^{t}S_0x \mid x \in \mathbb{Z}^{n+2}\right\}
    \end{equation*}
    equals $\mathbb{Z}$ (because the bilinear form is $\phi_0 = S_0/2$). But for any basis vector $e_i$ of the lattice $\mathbb{Z}^{n+2}$, we have
    \begin{equation*}
        \xi^{t}S_0e_i = 
    \begin{cases}
         d & \textup{if }i=1\\
         -s_{i-1} & \textup{if } 2 \leq i \leq n+1\\
         \frac{\frac{1}{2}qS^{-1}[s]-D}{qd} & \textup{if } i=n+2
    \end{cases},
    \end{equation*}
    so the above ideal is contained in $\mathbb{Z}$ by the conditions that define the set $\Xi_{d}$. Now, if it were equal to $k\mathbb{Z}$ for some $k \geq 1$, then we would have
    \begin{equation*}
        k \mid d, \ k\mid s, \textup{ and } k \mid \frac{\frac{1}{2}qS^{-1}[s]-D}{qd}.
    \end{equation*}
    This would then imply $k \mid D$, and so $k=1$, as $d, \ D$ are assumed to be coprime.
\end{proof}
We now have the following very important Lemma.
\begin{lemma}\label{class_number_g}
    The class number of $G_{\mathbb{Q}}^{*}$, defined as $\# (G^{*}_{\mathbb{Q}}\backslash G^{*}_{\mathbb{A}}/C)$, where 
\begin{equation}\label{stabiliser}
    C = \{x \in G^{*}_{\mathbb{A}} \mid xL_0 = L_0\},
\end{equation}
is 1.
\end{lemma}
\begin{proof}
    This is shown in \cite[Remark 2.4, (5)]{shimura_diophantine_2006}, which is an improvement of \cite[Theorem 9.26]{shimura2004arithmetic}, as the technical assumptions are weakened.
    % The proof follows from \cite[Theorem 9.26]{shimura2004arithmetic}, as the ideal class group of $\mathbb{Q}$ is trivial and $G^{*}_{\mathbb{R}}$ is not compact because we are in the indefinite case (the signature is $(1,n+1)$). Hence, $Y$ in that theorem is the whole $\mathbb{Q}_{\mathbb{A}}^{\times}$ and so $J_0$, hence $J_{\phi}$, is the whole $J$, in the notation of the theorem. Also $n \geq 3$ there corresponds to $n \geq 1$ in our notation.
\end{proof}
Assume now it exists $\xi \in L_0\left[-\frac{D}{q}, \frac{1}{2}\mathbb{Z}\right]$ and consider the algebraic subgroup of $G_{\mathbb{Q}}^{*}$
\begin{equation*}
    H(\xi)_{\mathbb{Q}} = \left\{g \in G_{\mathbb{Q}}^{*} \mid g\xi = \xi\right\}.
\end{equation*}
We note that if $W := \{x \in V_0 \mid \phi_0(x, \xi)=0\}$ and $\psi:= \phi_0 \mid _{W}$, then 
\begin{equation*}
    H(\xi)_{\mathbb{Q}} = \hbox{SO}^{\psi}(W)=\{\alpha \in \textup{GL}(W) \mid \psi[\alpha w] = \psi[w], \ \forall w \in W\}.
\end{equation*}
We now have the following Proposition, which is a special case of \cite[Theorem 2.2]{shimura_diophantine_2006}.
\begin{proposition}\label{genus-classes}
    There is a bijection 
    \begin{equation*}
        L_0\left[-\frac{D}{q}, \frac{1}{2}\mathbb{Z}\right]/\Gamma(L_0) \longleftrightarrow H(\xi)_{\mathbb{Q}}\backslash H(\xi)_{\mathbb{A}}/(H(\xi)_{\mathbb{A}}\cap C),
    \end{equation*}
    which is given as follows: \\\\
    If $k \in L_0\left[-\frac{D}{q}, \frac{1}{2}\mathbb{Z}\right]$, then there is some $\alpha \in G^{*}_{\mathbb{Q}}$ such that $k = \alpha \xi$ (Witt's theorem, \cite[Lemma 1.5 (ii)]{shimura2004arithmetic}). We then assign the $H(\xi)_{\mathbb{Q}}$-class of $\alpha^{-1}L_0$ to $k$. In particular, this then gives
    \begin{equation*}
    \# \left(L_0\left[-\frac{D}{q}, \frac{1}{2}\mathbb{Z}\right]/\Gamma(L_0)\right)  = \# (H(\xi)_{\mathbb{Q}}\backslash H(\xi)_{\mathbb{A}}/(H(\xi)_{\mathbb{A}}\cap C)) .
    \end{equation*}
\end{proposition}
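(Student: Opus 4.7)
The proposition is stated as a special case of Shimura's Theorem 2.2, so my plan is to explicitly construct the bijection, making visible the two ingredients: Witt's theorem (both global and local) and the class number 1 statement of Lemma \ref{class_number_g}. I will work with the double coset $H(\xi)_\mathbb{Q}\backslash H(\xi)_\mathbb{A}/(H(\xi)_\mathbb{A} \cap C)$; the $H(\xi)_\mathbb{Q}$-class of $\alpha^{-1}L_0$ appearing in the statement corresponds to an element of it under the usual genus-class dictionary (lattices in the $H(\xi)$-genus of $L_0$ $\leftrightarrow$ $H(\xi)_\mathbb{A}/(H(\xi)_\mathbb{A}\cap C)$), so the two formulations of the bijection are interchangeable.

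\textbf{Defining the map.} Given $k \in L_0[-D/q, \tfrac{1}{2}\mathbb{Z}]$, Witt's theorem over $\mathbb{Q}$ (applicable because the signature $(1,n+1)$ makes $V_0$ isotropic) provides some $\alpha \in G^*_\mathbb{Q}$ with $\alpha\xi = k$. Locally at each finite prime $p$, the $\mathbb{Z}_p$-maximality of $L_{0,p}$ and the matching invariants $\phi_0[\xi]=\phi_0[k]=-D/q$ together with $\phi_0(\xi, L_{0,p}) = \phi_0(k, L_{0,p}) = \tfrac{1}{2}\mathbb{Z}_p$ yield $c_p \in C_p$ with $c_p\xi = k$; these assemble into $c \in C$ with $c\xi=k$. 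Then $\alpha^{-1}c \in G^*_\mathbb{A}$ stabilises $\xi$, so $\alpha^{-1}c \in H(\xi)_\mathbb{A}$, and I would set
\[
\Phi(k) := H(\xi)_\mathbb{Q} \cdot (\alpha^{-1}c) \cdot (H(\xi)_\mathbb{A} \cap C).
\]
Writing $\alpha = c_p h_p$ at each $p$ with $h_p \in H(\xi)_p$, a short computation gives $(\alpha^{-1}c)L_0 = \alpha^{-1}L_0$, so $\Phi(k)$ represents the $H(\xi)_\mathbb{Q}$-class of $\alpha^{-1}L_0$ under the dictionary, matching the description in the statement.

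\textbf{Verification of the bijection.} Well-definedness is routine bookkeeping: changing $\alpha$ by $\eta \in H(\xi)_\mathbb{Q}$ on the right is absorbed into the left quotient, changing $c$ by $\theta \in H(\xi)_\mathbb{A}\cap C$ on the right is absorbed into the right quotient, and replacing $k$ by $\delta k$ with $\delta \in \Gamma(L_0) = G^*_\mathbb{Q}\cap C$ replaces both $\alpha, c$ by $\delta\alpha, \delta c$, leaving $\alpha^{-1}c$ fixed. For injectivity, an equality $(\alpha')^{-1}c' = \eta \alpha^{-1}c\theta$ (with $\eta \in H(\xi)_\mathbb{Q}$, $\theta \in H(\xi)_\mathbb{A}\cap C$) forces $\delta := \alpha'\eta\alpha^{-1}$ to equal $c'\theta^{-1}c^{-1}$; hence $\delta \in G^*_\mathbb{Q} \cap C = \Gamma(L_0)$ and $\delta k = \alpha'\eta\xi = \alpha'\xi = k'$. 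For surjectivity, given $g \in H(\xi)_\mathbb{A}$, Lemma \ref{class_number_g} provides a factorisation $g = \alpha^{-1}c$ with $\alpha \in G^*_\mathbb{Q}$ and $c \in C$; the vector $k := c\xi = \alpha\xi$ lies in $L_0[-D/q, \tfrac{1}{2}\mathbb{Z}]$ because $c$ is an isometry with $cL_0=L_0$, and by construction $\Phi(k) = H(\xi)_\mathbb{Q}\cdot g \cdot (H(\xi)_\mathbb{A} \cap C)$.

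\textbf{Main obstacle.} The only genuinely non-formal input is the local Witt statement used when producing $c_p$: two vectors of $L_{0,p}$ with matching squared length and matching denominator ideal lie in a common $C_p$-orbit. This is where the $\mathbb{Z}_p$-maximality of $L_{0,p}$ (inherited from the maximality of $L$ via Shimura's Lemma 6.3, as already noted in the paper) is essential, and it constitutes the substantive local content of Shimura's Theorem 2.2. For the purposes of this paper I would simply quote it rather than reprove it, so the real work is only the adelic bookkeeping outlined above.
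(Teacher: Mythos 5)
Your construction is correct and, modulo the local transitivity statement you explicitly flag and defer to Shimura, complete; it produces exactly the bijection described in the statement. The paper takes a much shorter route: it verifies only that $\Gamma(L_0)$ acts on $L_0\left[-\tfrac{D}{q},\tfrac{1}{2}\mathbb{Z}\right]$ (the one-line check $\phi_0[\gamma x]=\phi_0[x]$ and $\phi_0(\gamma x,L_0)=\phi_0(\gamma x,\gamma L_0)=\phi_0(x,L_0)$ --- a check your write-up presupposes but never records, since your well-definedness step already needs $\delta k$ to lie in the set) and then cites Shimura's Theorem 2.2(iii) and formula (2.7) of the Diophantine paper wholesale, using Lemma \ref{class_number_g} to take the auxiliary set $Y$ there to be trivial and identifying $G^{*}_{\mathbb{Q}}\cap C=\Gamma(L_0)$. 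What your version buys is transparency: it isolates exactly where each hypothesis enters --- maximality of $L_{0,p}$ for the existence of the local isometries $c_p$, and the class number one statement of Lemma \ref{class_number_g} precisely and only for surjectivity --- and it exhibits the assignment $k\mapsto$ class of $\alpha^{-1}L_0$ concretely via the identity $(\alpha^{-1}c)L_0=\alpha^{-1}L_0$ rather than as a black box. Three small repairs: (i) you must also choose $c_\infty\in G^{*}_{\mathbb{R}}$ with $c_\infty\xi=k$ (Witt over $\mathbb{R}$), since membership in $H(\xi)_{\mathbb{A}}$ constrains the archimedean component even though $C$ does not; (ii) the quoted local input should be stated for vectors of $V_{0,p}$ with matching $\phi_0[\cdot]$ and $\phi_0(\cdot,L_{0,p})$, not "vectors of $L_{0,p}$" --- the elements of $L_0\left[-\tfrac{D}{q},\tfrac{1}{2}\mathbb{Z}\right]$ (e.g.\ those of $\Xi_d$, whose middle block is $S^{-1}s$) generally do not lie in $L_0$; (iii) the factorisation $\alpha=c_ph_p$ is unnecessary, as $(\alpha^{-1}c)_pL_{0,p}=\alpha^{-1}L_{0,p}$ follows directly from $c_pL_{0,p}=L_{0,p}$.
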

\begin{proof}
    First of all, $\Gamma(L_0)$ acts on $L_0\left[-\frac{D}{q}, \frac{1}{2}\mathbb{Z}\right]$. This can be seen because if $\gamma \in \Gamma(L_0), x \in L_0\left[-\frac{D}{q}, \frac{1}{2}\mathbb{Z}\right]$, then $\phi_0[\gamma x] = \phi_0[x] = -D/q$ and $\phi_0(\gamma x, L_0) = \phi_0(\gamma x, \gamma L_0) = \phi_0(x, L_0) = \frac{1}{2}\mathbb{Z}$.\\
    
    The remaining assertions follow from \cite[Theorem 2.2, (iv)]{shimura_diophantine_2006}, because $G_\mathbb{Q}^{*}$ has class number one (Lemma \ref{class_number_g}) and $G_\mathbb{Q}^{*} \cap C = \Gamma(L_0)$.
    % In the notation there, we take $D$ to be the set $C$ of Lemma \ref{class_number_g}. Therefore we can take $Y$ to be the trivial set in \cite[Theorem 2.2, (iii)]{shimura_diophantine_2006}. The result then follows from part (iv) of the theorem.
\end{proof}
Let us now write
\begin{equation}\label{xi_i reps}
    L_0\left[-\frac{D}{q}, \frac{1}{2}\mathbb{Z}\right] = \bigsqcup_{i=1}^{h}\Gamma(L_0)\xi_i.
\end{equation}
with some $\xi_i \in L_0\left[-\frac{D}{q}, \frac{1}{2}\mathbb{Z}\right]$. This in particular implies that $\xi_i \in L_0^{*}$ for all $i$.\\\\
We now assert we can take
\begin{equation*}
    \xi_i \in \mathcal{P}_{S} = \{y'=(y_1,y,y_2)\in \mathbb{R}^{n+2} \mid y_1>0, \ \phi_0[y']>0\}    
\end{equation*}
of Section \ref{preliminaries} for all $i$. The second condition is clear, as $S_0[\xi_i]=-2D/q>0$ because we take $D<0$. For the first one, we can always multiply with $\textup{diag}(-1, 1_n, -1) \in \Gamma(L_0)$ and the assertion follows.\\\\
Now, if $\xi \in \Xi_d$ of Lemma \ref{equivalence_classes}, we can write $\xi = \gamma \xi_j$ for some $1 \leq j \leq h$ and $\gamma \in \Gamma(L_0)$. But, $\xi \in \mathcal{P}_S$ as well, so we must have $\gamma \in \Gamma(L_0) \cap G^{*,0}_{\mathbb{R}} = \Gamma^{+}(L_0)$. Indeed, if $\gamma \in G_{\mathbb{R}}^{*}\backslash G_{\mathbb{R}}^{*,0}$, then $\widetilde{\gamma}:=\textup{diag}(1, \gamma, 1) \in G_{\mathbb{R}}\backslash G_{\mathbb{R}}^{0}$. But then $\widetilde{\gamma}\langle i\xi_j\rangle = i\xi$ and $i\xi,\ i\xi_j \in \mathcal{H}_S$. This is a contradiction, because $\widetilde{\gamma}$ sends $\mathcal{H}_S$ to $-\mathcal{H}_S := \{z=x-iy \in V_0 \otimes_{\mathbb{R}} \mathbb{C} \mid y \in \mathcal{P}_S\}$ (see \cite[p. 18]{eisenstein_thesis}).\\

Therefore, from \cite[p. 26]{eisenstein_thesis}, we have $A(\xi) = A(\gamma \xi_i) = A(\xi_i)$. Define now
\begin{multline*}
    n(\xi_i;d) := \#\left\{s \in \mathbb{Z}^{n}/dS\mathbb{Z}^{n} \mid D \equiv \frac{1}{2}qS^{-1}[s] \pmod{qd}, \,\, \left(\frac{\frac{1}{2}qS^{-1}[s]-D}{qd}, S^{-1}s, d\right)^{t} = \gamma \xi_i, \right.\\\left.\gamma \in \Gamma^{+}(L_0)\right\}.
\end{multline*}
% \begin{multline*}
%     m(\xi_i; d):= \#\left\{s \in \mathbb{Z}^{n}/dS\mathbb{Z}^{n} \mid D \equiv \frac{1}{2}qS^{-1}[s] \pmod{qd}, \,\,\left(\frac{\frac{1}{2}qS^{-1}[s]-D}{qd}, S^{-1}s, d\right)^{t} = \gamma \xi_i,\right.\\ \left.\gamma\in \Gamma(L_0)\backslash\Gamma^{+}(L_0)\right\}.
% \end{multline*}
From the above considerations, Lemma \ref{equivalence_classes} and Proposition \ref{inner_product}, we may write, for $(N,D)=1$
\[
\langle V_N^* \phi_N, P_{k,D,r} \rangle = \sum_{i=1}^h \sum_{0< d|N} d^{k-(n+1)}n(\xi_i; d) A\left( \frac{N}{d} \xi_i\right).
\]
% +m(\xi_i;d)A\left( -\frac{N}{d} \xi_i\right)\right]=
% \[
% = \sum_{i=1}^h \sum_{0< d|N} d^{k-(n+1)}n(\xi_i; d) A\left( \frac{N}{d} \xi_i\right).
% \]
In particular, we arrive at the following Proposition.
\begin{proposition}\label{first form of result}
    Let $(D,r) \in \widetilde{\textup{supp}}(L, \sigma)$. Let $\mathcal{P}$ be a finite set of primes, which includes the prime factors of $D$. Let
    \begin{equation*}
    \mathcal{D}_{F,\mathcal{P}_{k,D,r},\mathcal{P}}(s) := \sum_{\substack{N=1\\(N,p)=1\forall p \in \mathcal{P}}}^{\infty} \langle V_N^* \phi_N, P_{k,D,r}\rangle N^{-s},
    \end{equation*}
    which converges absolutely for $\textup{Re}(s)>k+1$ by comparison with $D_{F,\mathcal{P}_{k,D,r}}(s)$ (see Lemma \ref{convergence_dirichlet}). Let also
    \[
    \zeta_{\xi_i, \mathcal{P}}(s):= \sum_{\substack{N=1\\(N,p)=1 \forall p\in \mathcal{P}}}^{\infty} n(\xi_i; N)N^{-s}.
    \]
We then have that
\begin{equation}\label{expression2}
\mathcal{D}_{F,\mathcal{P}_{k,D,r},\mathcal{P}}(s) = \sum_{i=1}^h \zeta_{\xi_i, \mathcal{P}}(s-k+n+1) D_{F,\xi_i, \mathcal{P}}(s),
\end{equation}
where $D_{F,\xi_i,\mathcal{P}}(s) := \sum_{\substack{N=1\\(N,p)=1 \forall p\in \mathcal{P}}}^{\infty} A(N \xi_i) N^{-s}$.
\end{proposition}
% Our aim now is to relate the series $\zeta_{\xi_i}(s)$ above to the $L$-function attached to the orthogonal group $H:=G_\xi := \{ g \in G_{S_0} : g \xi = \xi \}$ where $\xi$ any element in the set $\Xi$ above. Note that since $D<0$ we have that the group $G_\xi$ is a definite orthogonal group (since $S_0$ is of signature $(1,n+1)$). Let us first see how this can be used together with the formula of Sugano to establish the connection to the function $Z_{F}(s)$. We first recall the formula of Sugano.\newline\newline
\section{Relation to Sugano's Theorem}\label{relation to sugano}
In order to now obtain an Euler product, we will make use of the main theorem of Sugano in his paper \cite{sugano}. We first need some setup.\\

For each prime number $p < \infty$, we define $K_{p} := G_{p} \cap \textup{SL}_{n+4}(\mathbb{Z}_{p}) = G(\mathbb{Z}_p)$ and let $K_{f} := \prod_{p < \infty} K_{p}$.
\begin{flushleft}
    We remind ourselves here that $G_{\infty}^{0}$ acts transitively on $\mathcal{H}_{S}$ (cf. Section \ref{preliminaries}). Let $\mathcal{Z}_0$ denote any point of $\mathcal{H}_{S}$ with real part $0$ and denote by $K_{\infty}$ its stabiliser in $G_{\infty}^{0}$. Then, we have that $G_{\infty}^{0}/K_{\infty} \cong \mathcal{H}_{S}$.
\end{flushleft}
\begin{defn}
    Let $k \geq 0$. A function $\bm{F} : G_{\mathbb{A}} \longrightarrow \mathbb{C}$ is called a holomorphic cusp form of weight $k$ with respect to $K_{f}$ if the following conditions hold:
    \begin{enumerate}
        \item $\bm{F}(\gamma g u) = \bm{F}(g) \textup{ }\forall \gamma \in G_{\mathbb{Q}}, u \in K_{f}$.
        \item For any $g=g_{\infty}g_{f}$ with $g_{\infty} \in G_{\infty}^{0}$ and $g_{f} \in G_{\mathbb{A},f}$, $\bm{F}(g_{\infty}g_{f})j(g_{\infty},\mathcal{Z}_0)^{k}$ depends only on $g_{f}$ and $\mathcal{Z} = g_{\infty}\langle \mathcal{Z}_0\rangle$ and is holomorphic on $\mathcal{H}_{S}$ as a function of $\mathcal{Z}$.
        \item $\bm{F}$ is bounded on $G_{\mathbb{A}}$.
    \end{enumerate}
    
    Denote the above space by $\mathfrak{S}_{k}(K_{f})$.
\end{defn}
For each $g_{f} \in G_{\mathbb{A},f}$ and $\mathcal{Z} \in \mathcal{H}_{S}$, we define
\begin{equation*}
    \bm{F}(g_{f}; \mathcal{Z}) := \bm{F}(g_{\infty}g_{f})j(g_{\infty}, \mathcal{Z}_0)^{k},
\end{equation*}
where $g_{\infty} \in G^{0}_{\infty}$ is chosen so that $\mathcal{Z} = g_{\infty}\langle \mathcal{Z}_0\rangle$. Let now
\begin{equation*}
    \Gamma(g_{f}) = G_{\mathbb{Q}} \cap (G_{\infty}^{0} \times g_{f}K_{f}g_{f}^{-1}),
\end{equation*}
which is a discrete subgroup of $G_{\infty}^{0}$. We then have
\begin{equation*}
    \bm{F}(g_{f}; \gamma \langle \mathcal{Z}\rangle) = j(\gamma, \mathcal{Z})^{k}\bm{F}(g_{f}; \mathcal{Z})
\end{equation*}
for all $\gamma \in \Gamma(g_{f})$ and $\mathcal{Z} \in \mathcal{H}_S$. Now, if $X \in V_0$, define the element $\gamma_{X} \in G_{\mathbb{Q}}$ by
\begin{equation*}
    \gamma_{X} := \m{1&-X^{t}S_0 & -\frac{1}{2}S_0[X]\\ 0&1_{n+2}&X\\0&0&1}.
\end{equation*}
Now, the holomorphic function $\bm{F}(g_{f};\mathcal{Z})$ is invariant under $\mathcal{Z} \longmapsto \mathcal{Z}+X$ for $X$ in the lattice
\begin{equation*}
    L(g_{f}):=\{X \in V_{0} \mid \gamma_{X} \in \Gamma(g_f)\}.
\end{equation*}
Hence, every such function admits a Fourier expansion of the form
\begin{equation*}
    \bm{F}(g_{f}; \mathcal{Z}) = \sum_{\substack{r \in \hat{L}(g_{f})\\ir \in \mathcal{H}_S}}a(g_{f}; r)e(2\phi_0(r, Z)),
\end{equation*}
where $\hat{L}(g_{f}):=\left\{X \in V_0 \mid 2\phi_0(X,Y) \in \mathbb{Z} \textup{ for all } Y\in L(g_f)\right\}$ is the dual lattice of $L(g_f)$.\\\\
Finally, let us introduce adelic Fourier coefficients. Let $\chi = \prod_{p \leq \infty}\chi_p$ be a character of $\mathbb{Q}_{\mathbb{A}}$ such that $\chi_{\mid \mathbb{Q}}=1$ and $\chi_{\infty}(x) = e(x)$ for all $x \in \mathbb{R}$. For $\eta \in V_0$ and $g \in G_{\mathbb{A}}$, we define
\begin{equation*}
    \bm{F}_{\chi}(g;\eta) := \int_{V_0\backslash V_{\mathbb{A}}} \bm{F}(\gamma_{\chi}g)\chi\left(-2\phi_0(\eta,X)\right) dX.
\end{equation*}
Now, for $g_{\infty} \in G_{\infty}^{0}$ and $g_{f} \in G_{\mathbb{A},f}$ we have
\begin{equation}\label{property of F_x}
    \bm{F}_{\chi}(g_{\infty}g_{f}; \eta) = a(g_{f};\eta)j(g_{\infty};\mathcal{Z}_0)^{-k}e(2\phi_0(\eta, g_{\infty}\langle \mathcal{Z}_0\rangle)).
\end{equation}
We then have from the above definitions (see \cite[(1.14)]{sugano}):
\begin{itemize}
    \item $\bm{F}_{\chi}(\gamma_Xgu; \eta) = \chi(2\phi_0(\eta, X))\bm{F}_{\chi}(g;\eta) \textup{ for all } X \in V_{\mathbb{A}}, u \in K_{f}$,
    \item $\bm{F}_{\chi}\left(\m{\alpha&&\\&\beta&\\&&\alpha^{-1}} g;\eta\right) = \bm{F}_{\chi}(g;\beta^{-1}\eta\alpha), \textup{ for all } \alpha \in \mathbb{Q}^{\times}, \beta \in G^{*}_{\mathbb{Q}}$.
    \item $\bm{F}(\gamma_{X}g) = \sum_{\eta \in V_0} \bm{F}_{\chi}(g;\eta)\chi(2\phi_0(\eta, X)) \textup{ for all } X \in V_{0,\mathbb{A}}$. 
\end{itemize}
We now want to show that there is a bijection between the spaces $\mathfrak{S}(K_{f})$ and $S_k(\Gamma_S)$. This is true because of the following Lemma.
\begin{lemma}\label{correspondence automorphic forms}
    We have $G_{\mathbb{A}} = G_{\mathbb{Q}}G_{\infty}^{0}K_f$.
\end{lemma}
\begin{proof}
    From the proof of \cite[Lemma 1]{sugano}, we have $G_{\mathbb{A}} = G_{\mathbb{Q}}G_{\mathbb{A},f}^{*}G_{\infty}^0K_{f}$, where we view $G^{*}$ as a subgroup of $G$ via $g \longmapsto \textup{diag}(1,g,1)$. But, from Lemma \ref{class_number_g}, we have $G_{\mathbb{A}}^{*} = G_{\mathbb{Q}}^{*}G_{\infty}^{*}K_{f}^{*}$, where we now define
    \begin{equation*}
        K_{f}^{*} := \prod_{p < \infty} G^{*}(\mathbb{Z}_p).
    \end{equation*}
    From this, we obtain $G_{\mathbb{A},f}^{*} = G_{\mathbb{Q}}^{*}K_{f}^{*}$. Therefore, because elements of $K_{f}^{*}$ and $G_{\infty}^{0}$ commute, we obtain
    \begin{equation*}
        G_{\mathbb{A}} = G_{\mathbb{Q}}G_{\mathbb{Q}}^{*}G_{\infty}^{0}K_{f}^{*}K_{f} = G_{\mathbb{Q}}G_{\infty}^{0}K_f,
    \end{equation*}
    as required.
\end{proof}
The bijection is now given by $\bm{F} \longmapsto F(\mathcal{Z}):= \bm{F}(g_{\infty})j(g_{\infty}, \mathcal{Z}_0)^{k}$, where $g_{\infty} \in G_{\infty}^{0}$ is chosen so that $g_{\infty} \langle \mathcal{Z}_0 \rangle = \mathcal{Z}$.\\

 % From \cite[Lemma 17.6, (i)]{shimura2004arithmetic}, there is a bijection between $\mathfrak{S}(K_{f})$ and $S_k(\Gamma_S)$, which is given by $\bm{F} \longmapsto F(\mathcal{Z}):= \bm{F}(g_{\infty})j(g_{\infty}, \mathcal{Z}_0)^{k}$, where $g_{\infty} \in G_{\infty}^{0}$ is chosen so that $g_{\infty} \langle \mathcal{Z}_0 \rangle = \mathcal{Z}$. This follows by the fact that the class number of $G_{\mathbb{Q}}$ is $1$, which we can show in the same way as in Lemma \ref{class_number_g}.\\
 
Let now $g_{f}=(id, id, \cdots)$, which we denote by just $id$. It then follows that $F(\mathcal{Z}) = \bm{F}(id, \mathcal{Z}), \,\, \forall \mathcal{Z} \in \mathcal{H}_S$. In that case, we have $L(g_{f}) = L_0 = \mathbb{Z}^{n+2}$. Now, $a(id, r)=A(r)$ for all $r \in L_0^{*}$. Hence
\begin{equation*}
    a(id, \xi_i) = A(\xi_{i}),
\end{equation*}
for all $i$, where $\xi_{i}$'s are the representatives for $L_0\left[-\frac{D}{q}, \frac{1}{2}\mathbb{Z}\right]/\Gamma(L_0)$, as in \eqref{xi_i reps}.\\\\
Fix now a complete system of representatives $\{u_i\}_{i=1}^{h}$ for $H(\xi)_{\mathbb{Q}}\backslash H(\xi)_{\mathbb{A}}/(H(\xi)_{\mathbb{A}}\cap C)$, corresponding to the $\xi_i's$ of \eqref{xi_i reps}, in the sense of Proposition \ref{genus-classes}. Assume also that $u_{i,\infty}=1$ for all $i=1,\cdots, h$. We then prove the following Lemma.
\begin{lemma}\label{u_i, xi_i}
    Let $\xi$ be as we have specified it right after Lemma \ref{class_number_g}. If now $\xi_i$'s and $u_i$'s are as above, we have
    \begin{equation*}
        a(u_i, \xi) = a(id, \xi_{i}),
    \end{equation*}
    for all $i=1, \cdots, h$. Therefore, $a(u_i, \xi) = A(\xi_i)$ for all $i=1, \cdots, h$.
\end{lemma}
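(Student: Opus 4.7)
The plan is to exploit the class number $1$ statement of Lemma \ref{class_number_g} to decompose the adelic element $u_i$ into a rational factor times an element of $C$, and then apply the transformation properties of $\bm{F}_\chi$ to transfer the evaluation from $(u_i,\xi)$ to $(\mathrm{id},\xi_i)$.

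First I would write $u_i = \alpha_i c_i$ with $\alpha_i \in G^{*}_{\mathbb{Q}}$ and $c_i \in C$. Since $u_i\xi = \xi$, this forces $c_i\xi = \alpha_i^{-1}\xi$; the right-hand side is rational while the left-hand side is adelically integral (because $c_{i,p}L_{0,p} = L_{0,p}$ for all $p$), so by Proposition \ref{lattices}, $\alpha_i^{-1}\xi \in L_0$. The orthogonality of $c_i$ gives $\phi_0(\alpha_i^{-1}\xi,L_0) = \phi_0(c_i\xi, c_iL_0) = \phi_0(\xi,L_0) = \tfrac{1}{2}\mathbb{Z}$, so $\alpha_i^{-1}\xi \in L_0[-D/q,\tfrac{1}{2}\mathbb{Z}]$. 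A direct check that the assignment $u_i \mapsto [\alpha_i^{-1}\xi]$ is well-defined on double cosets (different choices $u_i = \alpha_i'c_i'$ differ by $\Gamma(L_0) = G^{*}_{\mathbb{Q}} \cap C$ on the right of $\alpha_i$) shows that this is precisely the inverse of the bijection in Proposition \ref{genus-classes}. Hence $\alpha_i^{-1}\xi = \gamma_i^{-1}\xi_i$ for some $\gamma_i \in \Gamma(L_0)$, and the vanishing argument for $A(-\xi_i)=0$ already used before Proposition \ref{first form of result} lets me assume $\gamma_i \in \Gamma^{+}(L_0)$.

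Next, I would apply the identity $\bm{F}_\chi(\m{1&&\\&\beta&\\&&1}g;\eta) = \bm{F}_\chi(g;\beta^{-1}\eta)$ twice: first with $\beta = \alpha_i$ to obtain $\bm{F}_\chi(u_i;\xi) = \bm{F}_\chi(c_i;\alpha_i^{-1}\xi)$, and then, in the opposite direction, with $\beta = \gamma_i$ to rewrite $\bm{F}_\chi(c_i;\gamma_i^{-1}\xi_i) = \bm{F}_\chi(\m{1&&\\&\gamma_i&\\&&1}c_i;\xi_i)$. The adele $\m{1&&\\&\gamma_ic_i&\\&&1}$ has finite part in $K_f$ (both $\gamma_i$ and $c_{i,f}$ stabilize $L_0$), so by the right $K_f$-invariance of $\bm{F}_\chi$ (inherited from $\bm{F}$) only the archimedean component matters. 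From $u_{i,\infty} = 1 = \alpha_i c_{i,\infty}$ I read off $c_{i,\infty} = \alpha_i^{-1}$, so the archimedean argument is $g_\infty := \m{1&&\\&\gamma_i\alpha_i^{-1}&\\&&1}$; this element lies in $G_{\infty}^{0}$ (else replace $\alpha_i$ by $-\alpha_i$ when available, or use $A(-\xi_i)=0$), satisfies $j(g_\infty,\mathcal{Z}_0) = 1$, and acts by $g_\infty\langle\mathcal{Z}_0\rangle = \gamma_i\alpha_i^{-1}\mathcal{Z}_0$.

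Finally, inserting into $\bm{F}_\chi(g_\infty;\eta) = a(\mathrm{id};\eta)\,j(g_\infty,\mathcal{Z}_0)^{-k}\,e[2\phi_0(\eta,g_\infty\langle\mathcal{Z}_0\rangle)]$ with $\eta = \xi_i$, and using orthogonal invariance of $\phi_0$ together with $\gamma_i\alpha_i^{-1}\xi = \xi_i$ to compute $\phi_0(\xi_i,\gamma_i\alpha_i^{-1}\mathcal{Z}_0) = \phi_0(\alpha_i\gamma_i^{-1}\xi_i,\mathcal{Z}_0) = \phi_0(\xi,\mathcal{Z}_0)$, I match against $\bm{F}_\chi(u_i;\xi) = a(u_i;\xi)\,e[2\phi_0(\xi,\mathcal{Z}_0)]$ (which holds because $u_{i,\infty}=1$) and conclude $a(u_i,\xi) = a(\mathrm{id},\xi_i) = A(\xi_i)$. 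The main obstacle is in the first paragraph: verifying carefully that the decomposition $u_i = \alpha_i c_i$ is compatible with the bijection of Proposition \ref{genus-classes}, and treating the $\Gamma^{+}(L_0)$ vs $\Gamma(L_0)$ sign discrepancy without losing the Fourier coefficient $A(\xi_i)$.
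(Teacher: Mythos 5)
Your proposal is correct and takes essentially the same route as the paper: both proofs reduce $a(u_i,\xi)$ to $a(\mathrm{id},\xi_i)$ by moving a rational element of $G^{*}_{\mathbb{Q}}$ across the argument of $\bm{F}_{\chi}$ via the identity $\bm{F}_{\chi}\left(\m{1&&\\&\beta&\\&&1}g;\eta\right)=\bm{F}_{\chi}(g;\beta^{-1}\eta)$, stripping the finite part by right $K_f$-invariance, and then using $j\left(\m{1&&\\&\beta&\\&&1},\mathcal{Z}_0\right)=1$ together with the orthogonal invariance $\phi_0(\beta\xi,\beta\mathcal{Z}_0)=\phi_0(\xi,\mathcal{Z}_0)$. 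The only (cosmetic) difference is that you re-derive the relevant direction of the bijection of Proposition \ref{genus-classes} from the class-number-one decomposition $u_i=\alpha_i c_i$ of Lemma \ref{class_number_g}, whereas the paper invokes the bijection directly by choosing $\alpha$ with $\xi_i=\alpha\xi$ and concluding $\alpha u_i\in C$; you are also more explicit than the paper about checking that the residual archimedean element lies in $G_{\infty}^{0}$.
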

\begin{proof}
    By the definition of the $\xi_{i}$'s and Witt's theorem (\cite[Lemma 1.5 (ii)]{shimura2004arithmetic}), there is $\alpha \in G_{\mathbb{Q}}^{*}$ such that $\xi_i = \alpha \xi$. By the correspondence given by Shimura in Proposition \ref{genus-classes}, we get $\alpha^{-1}L_0 = u_iL_0$. This then implies $\alpha u_i L_0 = L_0$, so $\alpha u_i \in C$ (we remind ourselves that $C = \{x \in G^{*}_{\mathbb{A}} \mid xL_0 = L_0\}$). By definition, we then get $\alpha u_i \in K_{f}$. Hence, we obtain
    \begin{equation*}
        \bm{F}_{\chi}(u_i; \xi) = \bm{F}_{\chi}(u_i; \alpha^{-1}\xi_{i}) = \bm{F}_{\chi}\left(\textup{diag}(1, \alpha, 1)u_i; \xi_i\right) = \bm{F}_{\chi}\left(\left(\textup{diag}(1, \alpha, 1),id\right); \xi_i\right),
    \end{equation*}
    where $\left(\textup{diag}(1, \alpha, 1),id\right)$ denotes the element of $G_{\mathbb{A}}$ with infinity part $\textup{diag}(1, \alpha, 1)$. The second equality above follows from the second bullet point and the third equality from the first bullet point, just before Lemma \ref{correspondence automorphic forms}. By the property of $\bm{F}_{\chi}$ in \eqref{property of F_x}, we obtain
    \begin{equation*}
        a(u_i;\xi)e\left(2\phi_0(\xi, \mathcal{Z}_0)\right) = a(id, \xi_i)j\left(\textup{diag}(1, \alpha, 1), \mathcal{Z}_0\right)^{-k}e\left(2\phi_0\left(\xi_i, \textup{diag}(1, \alpha, 1)\langle \mathcal{Z}_0 \rangle\right)\right).
    \end{equation*}
    But $j\left(\textup{diag}(1, \alpha, 1), \mathcal{Z}_0\right)=1$ and $\textup{diag}(1, \alpha, 1)\langle \mathcal{Z}_0 \rangle = \alpha \mathcal{Z}_0$ which then gives that the right hand side above equals
    \begin{equation*}
        a(id;\xi)e\left(2\phi_0(\xi_i, \alpha \mathcal{Z}_0)\right) = a(id;\xi)e\left(2\phi_0(\alpha\xi,\alpha \mathcal{Z}_0)\right) = a(id;\xi)e\left(2\phi_0(\xi, \mathcal{Z}_0)\right),
    \end{equation*}
    because $\xi_i = \alpha \xi$. This then gives the result.
\end{proof}
For each prime $p$ and $g_f \in G_{\mathbb{A},f}$, let 
\begin{equation*}
    M(g_f;\xi)_p := H(\xi)_p \cap g_{f}K_{f}g_{f}^{-1} \textup{ and } M(g_{f};\xi)_{f} := \prod_{p}M(g_f;\xi)_p.
\end{equation*}
Define then $e(\xi)_i := \#\{H(\xi)_{\mathbb{Q}}\cap M(u_ig_{f};\xi)_{f}\}$ for $1\leq i\leq h$ and $\displaystyle{\mu(\xi) := \sum_{i=1}^{h}e(\xi)^{-1}_{i}}$.

Let also $V(g_f;\xi)$ the space of $\mathbb{C}$-valued functions on $H(\xi)_{\mathbb{A}}$, which are left $H(\xi)_{\mathbb{Q}}$ and right $H(\xi)_{\infty}M(g_f;\xi)_{f}$ invariant. 
We now have the following Theorem, which follows from \cite[Theorem 1]{sugano}.
\begin{theorem}\label{sugano}
    Assume $F \in S_k(\Gamma_{S})$ corresponds to $\bm{F} \in \mathfrak{S}_{k}(K_{f})$, as above. Assume $\bm{F}$ is a simultaneous eigenfunction of the Hecke pairs $\mathcal{H}_p = (G_p, K_p)$ for all $p$ (see \cite[Section 2]{sugano}). We also assume $A(\xi) \neq 0$ and $g_f=id$. Then, there is a finite set of primes $\mathcal{P}$, such that if $f \in V(g_{f};\xi)$ is a simultaneous eigenfunction of the Hecke algebras $\mathcal{H}_{p}' := (H(\xi)_{p}, M(g_{f};\xi)_{p})$ for all $p \notin \mathcal{P}$, we have
    \begin{multline*}
    \sum_{\substack{N=1\\(N,p)=1 \forall p\in \mathcal{P}}}^{\infty} \mu(\xi)^{-1}\sum_{i=1}^{h} A(N\xi_i)\frac{\overline{f}(u_i)}{e(\xi)_i}N^{-(s+k-\frac{n+2}{2})} = \left(\mu(\xi)^{-1}\sum_{i=1}^{h} A(\xi_i)\frac{\overline{f}(u_i)}{e(\xi)_i}\right)\times\\\times L_{\mathcal{P}}(F;s)
    L_{\mathcal{P}}\left(\overline{f};s+1/2\right)^{-1}\times(\textup{zeta})^{-1}(s),
    \end{multline*}
    where $(\textup{zeta})(s) := \begin{cases}1 & \textup{if }n \textup{ odd} \\ \zeta_{\mathcal{P}}(2s) & \textup{if } n \textup{ even}
    \end{cases}$.\\\\
    Here, $L(-,s)$ denotes the standard $L$-function attached to orthogonal modular forms, as this is defined in \cite[Paragraph 4-1]{sugano}. Also, for any $L$-function, we write $L_{\mathcal{P}}$ for the Euler product not containing the primes in $\mathcal{P}$.
\end{theorem}
\begin{proof}
    This follows by Sugano's main Theorem in \cite[Theorem 1]{sugano}. In our setting, we take $g_{f} = (id, id, \cdots)$ and then substitute $a(u_i;N\xi)$ with $A(N\xi_i)$ in the original form of \cite[Theorem 1]{sugano}, due to Lemma \ref{u_i, xi_i}. We also note that in this case $H(\xi)_{\infty}M(g_{f};\xi)_{f} = H(\xi)_{\mathbb{A}} \cap C$, where $C$ is defined in \eqref{stabiliser}. The set of primes $\mathcal{P}$ contains all the primes contained in the set $\mathcal{P}_2$ of \cite[Theorem 1]{sugano} and the finite set of primes $p$ for which $\partial_p \neq 0$ or $\partial'_p \neq 0$, where $\partial_p, \partial_p'$ are defined in \cite[Theorem 1]{sugano}. We note that in our case, $L_p$ is maximal for all $p$, so $\mathcal{P}_1$ in \cite[Theorem 1]{sugano} is empty.
\end{proof}
% \begin{remark}
% We have adapted the above theorem in our setting, meaning we have taken $g_{f} = (id, id, \cdots, id)$ and then substituted $a(u_i;N\xi)$ with $A(N\xi_i)$ in the original form of \cite[Theorem 1]{sugano}. We also note that $H(\xi)_{\infty}M(g_{f};\xi)_{f} = H(\xi)_{\mathbb{A}} \cap C$.
% \end{remark}
From now on, we fix $g_f=id$. For any $f \in V(g_f; \xi)$, we set
\begin{equation}\label{a_f}
    \widetilde{f}(u_i) := \frac{\overline{f}(u_i)}{e(\xi)_i}\mu(\xi)^{-1} \textup{, }i=1,\cdots, h  \,\ \textup{and } \displaystyle{A_f := \sum_{i=1}^{h}\widetilde{f}(u_i)A(\xi_i)}.
\end{equation}
The formula in Theorem \ref{sugano} then becomes
\begin{equation*}
    (\textup{zeta})(s) \times L_{\mathcal{P}}\left(\overline{f}; s+1/2\right)\sum_{i=1}^{h}\widetilde{f}(u_i)D_{F,\xi_i,\mathcal{P}}\left(s+k-(n+2)/2\right) = A_fL_{\mathcal{P}}(F;s),
\end{equation*}
where $D_{F,\xi_i,\mathcal{P}}(s)$ is the Dirichlet series appearing in Proposition \ref{first form of result}.
This is true for any simultaneous eigenfunction $f$ (hence $\widetilde{f}$ as well) of the Hecke algebras $H_{p}' = (H(\xi)_p, M(g_f; \xi)_p)$ for all $p\notin \mathcal{P}$. Our aim is to invert it so that we solve for $D_{F, \xi_i, \mathcal{P}}(s)$.
\newline\newline
% Now, any such $f$ is a $\mathbb{C}$-valued function on $H(\xi)_{\mathbb{A}}$, which is left $H(\xi)_{\mathbb{Q}}$ invariant and right $H(\xi)_{\infty}M(g_{f};\xi)_{f}$ invariant. 
From the definition of the $u_i's$ just before Lemma \ref{u_i, xi_i}, we have that 
\begin{equation*}
H(\xi)_{\mathbb{A}} = \bigsqcup_{i=1}^{h}H(\xi)_{\mathbb{Q}}u_iD,
\end{equation*}
where $D := H(\xi)_{\infty}M(g_{f};\xi)_{f}$. We note $D$ is an open subgroup of $H(\xi)_{\mathbb{A}}$ and $D \cap H(\xi)_{f}$ is compact. By \cite[Lemma 17.6]{shimura2004arithmetic}, there is a correspondence
\begin{equation*}
    f \longleftrightarrow \{f^{(i)} \ |\  i=1,\cdots, h\},
\end{equation*}
with each $f^{(i)} \in \mathbb{C}$, because $H(\xi)_{\infty} \cong \hbox{SO}(n+1)$ is compact. We also note here that $f(u_i) = f^{(i)}$ for all $i=1,\cdots, h$, as we can see by the way these $f^{(i)}$ are defined in the proof of \cite[Lemma 10.8]{Shimura_Euler_Product}.
\newline\newline
Now, for any two simultaneous eigenfunctions $f_{i}, f_j$ of the Hecke algebras defined by the pairs $\mathcal{H}_{p}' = (H(\xi)_p, M(g_f, \xi)_p)$ for all $p \notin \mathcal{P}$, their inner product is defined via the formula
\begin{equation*}
    \langle f_i, f_j \rangle = \left\{\sum_{k=1}^{h}\nu\left(\Gamma^{k}\right)\right\}^{-1}\sum_{k=1}^{h}\nu\left(\Gamma^{k}\right)\overline{f_i^{(k)}}f_{j}^{(k)} = \left\{\sum_{k=1}^{h}\nu\left(\Gamma^{k}\right)\right\}^{-1}\sum_{k=1}^{h}\nu\left(\Gamma^{k}\right)\overline{f_i(u_k)}f_{j}(u_k),
\end{equation*}
where $\Gamma^{k} = H(\xi)_{\mathbb{Q}} \cap u_kDu_k^{-1}$ and $\nu\left(\Gamma^{k}\right) = \#\left(\Gamma^{k}\right)^{-1}$, as in \cite[(17.23)]{shimura2004arithmetic} (here we again use the fact that $H(\xi)_{\infty}$ is compact). \\

As $e(\xi)_i = \#\{H(\xi)_{\mathbb{Q}}\cap M(u_ig_f;\xi)_f\}$, we have that $e(\xi)_{i} = \nu\left(\Gamma^{i}\right)^{-1}$, which also gives 
\begin{equation*}
    \mu(\xi) = \sum_{k=1}^{h}\nu\left(\Gamma^{k}\right).
\end{equation*}
It is now possible to choose a basis of orthonormal Hecke eigenforms $\{f_1,f_2, \cdots, f_h\}$ for $V(g_f; \xi)$ with respect to the above inner product (i.e. $\langle f_i,f_j\rangle=\delta_{ij}$ for all $i,j$). This is true because the Hecke algebra defined by $\mathcal{H}_{p}'$ is commutative and consists of self-adjoint operators for all $p \notin \mathcal{P}$ (see proof of \cite[Proposition 17.14]{shimura2004arithmetic}). Also, by \cite[Lemma 17.6, (1)]{shimura2004arithmetic}, there is an isomorphism between $V(g_f; \xi)$ and $\mathbb{C}^{h}$. Therefore, this basis must consist of $h$ eigenforms. Hence, we get the expression
\begin{multline*}
    D_{F,\xi_i,\mathcal{P}}\left(s+k-(n+2)/2\right) = \mu(\xi)^{-1}(\textup{zeta})^{-1}(s)L_{\mathcal{P}}(F;s)\sum_{j=1}^{h}\nu\left(\Gamma^{i}\right)\frac{\mu(\xi)}{e(\xi)_i^{-1}}f_{j}(u_i)\times\\\times L_{\mathcal{P}}\left(\overline{f_j};s+1/2\right)^{-1}A_{f_j},
\end{multline*}
which after the simplifications becomes
\begin{multline}\label{expression}
D_{F, \xi_i,\mathcal{P}}(s) = (\textup{zeta})^{-1}\left(s-k+(n+2)/2\right) L_{\mathcal{P}}\left(F; s-k+(n+2)/2\right)\times\\\times\sum_{j=1}^{h}f_j(u_i)L_{\mathcal{P}}\left(\overline{f_j};s-k+(n+3)/2\right)^{-1}A_{f_{j}}.
\end{multline}
% We note $A_{f_j}$ is $A_g$ with $g(u_i) = \frac{\overline{f_j(u_i)}}{e(\xi)_i}\mu(\xi)^{-1}$. 
Hence, we arrive at the following Theorem.
\begin{theorem}\label{dirichlet}
    Let $(D,r) \in \widetilde{\textup{supp}}(L, \sigma)$. Let $\mathcal{P}$ be a finite set of primes, containing the primes described in the proof of Theorem \ref{sugano} and all the prime divisors of $D$. Let $F \in S_{k}(\Gamma_S)$ corresponding to $\bm{F} \in \mathfrak{S}(K_f)$ with $A(\xi) \neq 0$. Assume $\bm{F}$ is a simultaneous eigenfunction for the Hecke algebra $\mathcal{H}_p$, defined by the pair $(G_p, K_p)$ for all $p$ and let $\mathcal{P}_{k,D,r}$ denote the Poincar\'e series of \eqref{poincare in maass}. Let also $\{f_{j}\}_{j=1}^{h} \in V(g_f;\xi)$ denote an orthonormal basis of simultaneous eigenfunctions for the pairs $\mathcal{H}_p' = (H(\xi)_p, M(g_f;\xi)_p)$ for all $p \notin \mathcal{P}$, $A_{f_j}$ as in \eqref{a_f}, and denote with $L_{\mathcal{P}}(-, s)$ the standard $L$-function attached to either $F$ or any $f_{j}$, by ignoring the $p$-factors for $p \in \mathcal{P}$. We then have
    % Let $(D,r) \in \widetilde{\textup{supp}}(L, \sigma)$. Let $\mathcal{P}$ be a finite set of primes, containing the primes described in \cite[Theorem 1]{sugano} and all the prime divisors of $D$. Let $F \in S_{k}(\Gamma_S)$ be a simultaneous eigenfunction for the Hecke algebra $\mathcal{H}_p$, defined by the pair $(G_p, K_p)$ for all $p \notin \mathcal{P}$ and let $P_{k,D,r}$ denote the Poincar\'e series of Section \ref{Poincar\'e}. Let also $\{u_i\}_{i=1}^{h}$ denote a complete system of representatives for $H(\xi)_{\mathbb{Q}}\backslash H(\xi)_{\mathbb{A}}/(H(\xi)_{\mathbb{A}}\cap C)$ such that $u_{i,\infty}=1$ for all $i=1,\cdots, h$ and $\{\xi_i\}_{i=1}^{h}$ representatives of $L_0\left[-\frac{D}{q}, \frac{1}{2}\mathbb{Z}\right]$ corresponding to $u_i$, in the sense described in Proposition \ref{genus-classes}. Let also $\{f_{j}\}_{j=1}^{h}$ denote a complete orthogonal basis of simultaneous eigenfunctions for the pairs $\mathcal{H}_p' = (H(\xi)_p, M(g_f;\xi)_p)$ for all $p \notin \mathcal{P}$, and denote with $L_{\mathcal{P}}(-, s)$ the standard $L-$function attached to either $F$ or any $f_{j}$, by ignoring the $p-$factors for $p \in \mathcal{P}$. Finally, let $\zeta_{\xi_{i}, \mathcal{P}}(s)$ be as in Proposition \ref{first form of result}. We then have
    \begin{equation*}
        \mathcal{D}_{F, \mathcal{P}_{k,D,r}, \mathcal{P}}(s) = \sum_{\substack{N=1\\(N,p)=1\forall p \in \mathcal{P}}}^{\infty} \langle V_N^* \phi_N, P_{k,D,r}\rangle N^{-s} = 
    \end{equation*}
    \begin{equation*}
    =L_{\mathcal{P}}\left(F;s-k+(n+2)/2\right)\sum_{j=1}^{h}A_{f_j}L_{\mathcal{P}}\left(\overline{f_j};s-k+(n+3)/2\right)^{-1}\sum_{i=1}^{h}\zeta_{\xi_i,\mathcal{P}}(s-k+n+1)f_{j}(u_i)\times
\end{equation*}
\begin{equation*}
    \times \begin{cases}1 & \textup{if }n \textup{ odd} \\ \zeta_{\mathcal{P}}(2s-2k+n+2)^{-1} & \textup{if } n \textup{ even}
    \end{cases},
\end{equation*}
where $\zeta_{\xi_{i}, \mathcal{P}}(s)$ are as in Proposition \ref{first form of result}.
\end{theorem}
\begin{proof}
    By substituting the expression we deduced in \eqref{expression} into \eqref{expression2}, we obtain (here we denote by "$\textup{zeta}$" the function of Theorem \ref{sugano} after $s \longmapsto s-k+(n+2)/2$)
\begin{equation*}
    \sum_{\substack{N=1\\(N,p)=1\forall p \in \mathcal{P}}}^{\infty} \langle V_N^* \phi_N, P_{k,D,r}\rangle N^{-s} = \sum_{i=1}^h \zeta_{\xi_i,\mathcal{P}}(s-k+n+1) D_{F,\xi_i,\mathcal{P}}(s) =
\end{equation*}
\begin{equation*}
    = (\textup{zeta})^{-1}\times\sum_{i=1}^{h}\zeta_{\xi_i,\mathcal{P}}(s-k+n+1) L_{\mathcal{P}}\left(F;s-k+\frac{n+2}{2}\right)\sum_{j=1}^{h}f_{j}(u_i)L_{\mathcal{P}}\left(\overline{f_j};s-k+\frac{n+3}{2}\right)^{-1}A_{f_j}
\end{equation*}
\begin{equation*}
    =(\textup{zeta})^{-1}\times L_{\mathcal{P}}\left(F;s-k+\frac{n+2}{2}\right)\sum_{j=1}^{h}A_{f_j}L_{\mathcal{P}}\left(\overline{f_j};s-k+\frac{n+3}{2}\right)^{-1}\sum_{i=1}^{h}\zeta_{\xi_i,\mathcal{P}}(s-k+n+1)f_{j}(u_i). \qedhere
\end{equation*}
\end{proof}
Hence, we would like to explore the connection between $\sum_{i=1}^{h}\zeta_{\xi_i}(s-k+n+1)f_{j}(u_i)$ and $L\left(\overline{f_j};s-k+(n+3)/2\right)$.\\

It turns out we can now obtain a clear-cut Euler product expression in the case $h=1$ and when $D$ is a specifically chosen number. The question is how we can pick $S$, so that we can get $h=1$. This is the theme of the next Section.
% Let $f$ be an automorphic form of $H$ of weight zero. That is, it is a function $f : H_{\mathbb{A}} \rightarrow \mathbb{C}$, such that $f(\gamma h k) = f(x)$ for all $\gamma \in H_{\mathbb{Q}}$ and $k \in K$ for a compact subgroup $K$. To such an $f$ we attach the Dirichlet series
% \[
% \mathfrak{T}(f,s) = \sum_{h_i} f(h_i) \sum_{\gamma \in R^i} N(\nu(h_i^{-1} \gamma))^{-s} 
% \]
% where $h_i \in H_{\mathbf{h}}$ is a finite set of adeles (with $h_{\infty}=1$) representing the double coset $H_{\mathbb{Q}}\setminus H_{\mathbb{A}}/K$ and $R^i$ is a complete set of representatives of $\Gamma^i \setminus H_{\mathbb{Q}}$ with $\Gamma^i := h_i K h_{i}^{-1} \cap H_{\mathbb{Q}}$. Here $\nu(x)$, for an adelic matrix $x$, denotes the denominator ideal associated to $x$. Here we assume without loss of generality that $u_1=1$ and $f(u_1) = 1 $.\newline

% We now want to understand the relation of the functions $\zeta_{\xi_i}(s)$ to the series $\sum_{\gamma \in R^i} N(\nu(h_i^{-1} \gamma ))^{-s}$.

% It is perhaps useful here to first understand the connection of the Dirichlet series $\zeta_{\xi_i}(s)$ with the $L$-function attached to an definite orthogonal group for the cases we know already that it is true. That is the cases of $O(2,3)$ (paper of Kohnen and Skoruppa) and $O(2,4)$ (indirectly by the work of Gritsenko who has studied the $Z_F(s)$ in two different ways). We will do this after inserting  remark discussing the accidental isomorphisms.
% \newpage
\section{Explicit Examples}\label{explicit examples}
We will now focus our attention to some specific examples of matrices $S$ and corresponding Poincar\'e series. In particular, we set $D=-q$ and choose matrices $S$, so that the number of representatives for $L_0\left[1, \frac{1}{2}\mathbb{Z}\right]/\Gamma(L_0)$ is $1$. We therefore take $G$ to be the Poincar\'e series $\mathcal{P}_{k,-q,r}$, with $r \in L$ such that $(-q,r) \in \widetilde{\textup{supp}}(L,\sigma)$. In particular, those choices imply that we can take
\begin{equation*}
    \xi = (1,0,\cdots, 0,1)^{t}
\end{equation*}
as an element of $L_0\left[1, \frac{1}{2}\mathbb{Z}\right]$.
% \begin{remark}\label{connected_component}
%     In the above setting, for each $d \geq 1$ coprime to $q$ and 
%     \begin{equation*}
%         \xi' = \left(\frac{\frac{1}{2}qS^{-1}[s]-D}{qd}, S^{-1}s, d\right) \in \Xi_{d},
%     \end{equation*}
%     as in Lemma \ref{equivalence_classes}, we get that $\exists \textup{ } \gamma \in \Gamma(L_0)$ such that $\xi' = \gamma \xi$. But if now $\gamma = \m{*&*&*\\x&*&y}$, we then have
%     \begin{equation*}
%         \xi' = \gamma \xi = \m{*&*&*\\x&*&y}\xi = \m{*\\ x+y}
%     \end{equation*}
%     and then $x+y=d>0$. Hence, from \cite[p. 180, (6)]{krieg_integral_orthogonal}, $\gamma \in \Gamma(L_0)\cap G^{*,0}(\mathbb{R})$.
% \end{remark}
Therefore, $\zeta_{\xi,\mathcal{P}}(s)$ of Proposition \ref{first form of result} can be written as
    \begin{equation*}
        \zeta_{\xi,\mathcal{P}}(s) = \sum_{\substack{N=1\\(N,p)=1 \forall p\in \mathcal{P}}}^{\infty} n(\xi; N)N^{-s},
    \end{equation*}
    where this time
    \begin{equation*}
        n(\xi;d) = \#\left\{s \in \mathbb{Z}^{n}/dS\mathbb{Z}^{n} \mid D \equiv \frac{1}{2}qS^{-1}[s] \pmod{qd}\right\}.
    \end{equation*}
In these cases, we are able to deduce an exact Euler product expression, connecting the Dirichlet series of interest and the standard $L$-function of the orthogonal group.  
\subsection{Examples with rank $1$}\label{examples with rank 1}
We consider the case where $S = 2t$ for some $t \geq 1$ with $t$ square-free. This condition is needed so that the lattice $L = \mathbb{Z}$ (and therefore $L_0$ and $L_1$) is maximal (see \cite[Example 1.6.6(ii)]{eisenstein_thesis}). Now $V_0=\mathbb{Q}^{3}$ and the quadratic form of interest is then
\begin{equation*}
    \phi_0(x,y) = \frac{1}{2}x^{t}S_0y,
\end{equation*}
for $x,y \in V_0$, where $S_0 = \m{&&1\\&-2t&\\1&&}$. Hence, $\phi_0$ is represented by $S_0/2$ with respect to the standard basis $e_1,e_2, e_3$ of $V$. \\\\
By \cite[Paragraph 7.3]{shimura2004arithmetic} we have that there exists a quaternion algebra $B$ over $\mathbb{Q}$ such that we can put $V_0 = B^{\circ}\zeta$, $\phi_0[x\zeta] = dxx^{\iota}$, $2\phi_0(x\zeta, y\zeta) = d\textup{Tr}_{B/\mathbb{Q}}(xy^{\iota})$, where $B^{\circ}=\{x\in B \mid x^{\iota}=-x\}$ with $\iota$ the main involution of $B$ and $\zeta \in A(V_0)$ such that $\zeta^{2} = -d$. Here, by $A(V_0)$ we mean the Clifford algebra of $(V_0,\phi_0)$ (see \cite[Chapter 2]{shimura2004arithmetic}). Also, in general, we define $\textup{Tr}_{B/\mathbb{Q}}(x) := x+x^{\iota}$ for $x \in B$. \\\\
Now, from \cite[Paragraph 7.3]{shimura2004arithmetic}, we have a way to compute $\zeta$ and $d$. We first need a basis $h_1,h_2,h_3$ of $V$ such that $\phi_0(h_i,h_j)=c_{i}\delta_{ij}$ for all $1\leq i,j\leq 3$. In our case, we can make a choice
\begin{equation*}
    h_1 = e_1+e_3, h_2 = e_2, h_3 = e_1-e_3.
\end{equation*}
Then, we get that the condition above is satisfied with $c_1 = 1, c_2 = -t, c_3 = -1$ and therefore $d=c_1c_2c_3 = t$. Also, even though it is not needed in what follows, $\zeta=h_1h_2h_3$. \\\\
We now remind ourselves that $\xi = (1,0,1)^{t}$ and $W=(\mathbb{Q}\xi)^{\perp}$. This then implies that $\phi_0[\xi]=1$. From \cite[Paragraph 5.2]{shimura_diophantine_2006}, we get that $\exists k \in B^{\circ}$ such that $\xi = k\zeta$. Then, if $K:= \mathbb{Q}+\mathbb{Q}k$, we get that $K=\mathbb{Q}(\sqrt{-t})$ because $-t$, which is $-d\phi_0[\xi]$, cannot be a square in $\mathbb{Q}^{\times}$.\\\\
Using now \cite[Theorem 5.7]{shimura_diophantine_2006} and the formula $(5.11)$ given there, tailored to our situation, we have the following Theorem.
\begin{theorem}\label{rank 1 main theorem}
    We define the following quantities:
    \begin{itemize}
        \item Let $K:=\mathbb{Q}(\sqrt{-t})$ and $c_K$ denote the class number of $K$.
        \item $\mathfrak{c}$ denotes the ideal of $\mathbb{Z}$, determined by the local conditions
            \begin{equation}\label{conductor}
                \mathfrak{c}_p^{2}N_{K/\mathbb{Q}}(\mathfrak{d}_{K/\mathbb{Q}})_p = \mathfrak{a}_p\phi_0[\xi]\phi_0(\xi, L_0)_{p}^{-2},
            \end{equation}
            for all primes $p$, where $\mathfrak{a} = t\mathbb{Z}$ and $\mathfrak{d}_{K/\mathbb{Q}}$ is the different ideal.
        \item For a prime $p$ dividing $\mathfrak{c}$, we define $[K/\mathbb{Q}, \ p]$ to be $-1, 0 \textup{ or 1}$, according to whether $p$ remains prime, ramifies or splits in $K$.
        \item Let $p$ be a rational prime. Pick $\epsilon_p \in -\det(\phi_{0,p})(\mathbb{Q}_p^{\times})^{2}$, which is either a unit or a prime element of $\mathbb{Q}_p$ and choose an element $\beta_p \in \mathbb{Q}_p$ such that $\phi_0(\xi, L_{0,p}) = \beta_p\mathbb{Z}_p$. Define then $r_p(\xi) := \epsilon_p^{-1}\phi_0[\xi]\beta_p^{-2}$. Define also
        \begin{equation*}
            \mathfrak{C}_p:= \{u^2+4w \mid u, w \in \mathbb{Z}_p\}.
        \end{equation*}
        \item $\mathfrak{a}^{*}$ is the product of the prime factors $p$ of $t$ such that $r_{p}(\xi) \in p^{-1}\mathbb{Z}_p$ and $r_p(\xi) \notin \mathfrak{C}_p$. 
        \item $\mu$ is the number of prime ideals dividing $\mathfrak{a}^{*}$ and ramified in $K$.
        \item $U := \mathcal{O}_K^{\times}$ and $U' := \{x \in \mathcal{O}_K^{\times} \mid x-1 \in \mathfrak{c}_p(\mathfrak{d}_{K/\mathbb{Q}})_{p} \ \forall p \nmid \mathfrak{a}^{*}\}$.
    \end{itemize} 
    We then have
    \begin{equation}\label{index}
        [H(\xi)_{\mathbb{A}}:H(\xi)_{\mathbb{Q}}(H(\xi)_{\mathbb{A}}\cap C)] = c_K 2^{1-\mu}[U:U']^{-1}N(\mathfrak{c}) \prod_{p \mid \mathfrak{c}}\left\{1-\frac{1}{p}[K/\mathbb{Q}, \ p]\right\},
    \end{equation}
    \begin{proof}
        This follows from \cite[Theorem 5.7]{shimura_diophantine_2006}. In our case, we have, in the notation of the Theorem:
        \begin{itemize}
            \item The base field $F$ is $\mathbb{Q}$, which has class number $1$.
            \item The product of all the prime ideals in $\mathbb{Q}$ for which $B$ ramifies, $\mathfrak{e}$, equals $\mathbb{Z}$. This is true because for each prime $p$, $\phi_0$, viewed as a bilinear form over $\mathbb{Q}_{p}$, is isotropic (the Witt index is $1$ for all primes $p$). Therefore by \cite[Paragraph 7.3]{shimura2004arithmetic}, $B$ over $\mathbb{Q}_p$ is not a division algebra, hence is isomorphic to $M_{2}(\mathbb{Q}_p)$, i.e. $B$ splits (or is unramified) over $p$. Hence, $\mathfrak{e} = \mathbb{Z}$.
            \item Because $t$ is square-free, $\mathfrak{a}=t\mathbb{Z}$.
            \item Because $K$ is imaginary quadratic, $\infty$ ramifies, so $\nu=1$ and $N_{K/\mathbb{Q}}(\mathcal{O}_K^{\times}) = \{1\}$, so $\left[\mathbb{Z}^{\times}:N_{K/\mathbb{Q}}(\mathcal{O}_K^{\times})\right]=2$.\qedhere
        \end{itemize}
        \end{proof}
\end{theorem}

The above Theorem gives us the number $\displaystyle{\#\left(L_0\left[1, \frac{1}{2}\mathbb{Z}\right]/\Gamma(L_0)\right)}$ from Proposition \ref{genus-classes} and the fact that in this case, $H(\xi)$ is commutative (see proof of \cite[Theorem 5.10]{shimura_diophantine_2006}). We are interested in the cases when this number is $1$.\\

% We will show that this happens in the cases $t=1,2,3$, and these are the only cases among the numbers $t$ for which the class number of $K$ is 1. We note that $t=1,2,3$ give rise to the only Euclidean lattices of rank $1$.
% We start by first computing the quantities appearing in the statement of \cite[Theorem 5.7]{shimura_diophantine_2006}. We have that $\mathfrak{e}$ is the product of all the prime ideals in $\mathbb{Q}$ for which $B$ ramifies. However, for each prime $p$, $\phi_0$, viewed as a bilinear form over $\mathbb{Q}_{p}$ is isotropic (the Witt index has to be $1$ for all primes $p$). Therefore by \cite[Paragraph 7.3]{shimura2004arithmetic}, $B$ over $\mathbb{Q}_p$ is not a division algebra, hence is isomorphic to $M_{2}(\mathbb{Q}_p)$, i.e. $B$ splits or is unramified over $p$. Hence, $\mathfrak{e} = \mathbb{Z}$.\\\\
For the different $\mathfrak{d}_{K/\mathbb{Q}}$, we have
\begin{equation*}
\mathfrak{d}_{K/\mathbb{Q}}=
    \begin{cases}
        2\sqrt{-t}\mathcal{O}_K & \textup{ if } -t \not\equiv 1\pmod 4\\
        \sqrt{-t}\mathcal{O}_K & \textup{ if } -t \equiv 1 \pmod 4
    \end{cases}.
\end{equation*}
We now want to determine the ideal $\mathfrak{c}$. But, by the above
\begin{equation*}
N_{K/\mathbb{Q}}(\mathfrak{d}_{K/\mathbb{Q}})=
    \begin{cases}
        4t\mathbb{Z} & \textup{ if } -t \not\equiv 1\pmod 4\\
        t\mathbb{Z} & \textup{ if } -t \equiv 1 \pmod 4
    \end{cases},
\end{equation*}
and $\phi_0[\xi]=1$, $\displaystyle{\phi_0(\xi, L_0) = \frac{1}{2}\mathbb{Z}}$. So, from \eqref{conductor}, we get the equation
\begin{equation*}
\mathfrak{c}_p^{2}\cdot
    \begin{cases}
        4t\mathbb{Z}_p & \textup{ if } -t \not\equiv 1\pmod 4\\
        t\mathbb{Z}_p & \textup{ if } -t \equiv 1 \pmod 4
    \end{cases}=4t\mathbb{Z}_p.
\end{equation*}
Therefore,
\begin{equation}\label{c}
\mathfrak{c}=
    \begin{cases}
        \mathbb{Z} & \textup{ if } -t \not\equiv 1\pmod 4\\
        2\mathbb{Z} & \textup{ if } -t \equiv 1 \pmod 4
    \end{cases}.
\end{equation}
We will now consider specific cases in order to determine when the index in \eqref{index} is 1.
% In particular, we are looking into cases for which the class number of $K$, $c_K$, is $1$, i.e. $t\in\{1,2,3,7,11,19,43,67,163\}$.
\begin{itemize}
    \item $t=1$. Then, $\mathfrak{a}=\mathfrak{e}=\mathfrak{a}^{*} = \mathbb{Z}$ and from \eqref{c} $\mathfrak{c} = \mathbb{Z}$. Now
    \begin{equation*}
        U' = \{x\in \mathcal{O}_K^{\times} \mid x-1\in 2\mathcal{O}_{K,p} \textup{ }\forall p\}.
    \end{equation*}
    Clearly $\pm 1 \in U'$ but $\pm i \notin U'$ because $2\mathcal{O}_K=(1+i)^2\mathcal{O}_K = (1-i)^2\mathcal{O}_K$. So, $[U:U'] = 2$. Finally, $\mu = 0$ because $\mathfrak{a}^{*}\mathfrak{e} = \mathbb{Z}$ and therefore, we get
    \begin{equation*}
        [H(\xi)_{\mathbb{A}}:H(\xi)_{\mathbb{Q}}(H(\xi)_{\mathbb{A}}\cap C)] = 1 \cdot 2^{1-0}\cdot 2^{-1} = 1.
    \end{equation*}
    % \item $t=2$. Then $\mathfrak{a}=2\mathbb{Z}$ and we can compute $\mathfrak{a}^{*} = 2\mathbb{Z}$. This is true because the only prime factor of $\mathfrak{a}$ is $2$ and $r_{2}(\xi)=-2 \in \frac{1}{2}\mathbb{Z}_p$ and $r_{2}(\xi) \notin \mathfrak{C}_2$ (if $-2=u^2+4w$ with $u,v \in \mathbb{Z}_2$, then $2 \mid u$ which then implies $2 \mid 1$, contradiciton). The definitions of $r_2(\xi)$ and $\mathfrak{C}$ can be found in \cite[Paragraph 5.5]{shimura_diophantine_2006}. Also, by the above, $\mathfrak{c} = \mathbb{Z}$. Now, $U = \mathcal{O}_K^{\times} = \{\pm 1\}$ and 
    % \begin{equation*}
    %     U' = \{x \in \mathcal{O}_K^{\times} \mid x-1 \in 2\sqrt{-2}\mathcal{O}_{K,p} \textup{ }\forall p \neq 2\}.
    % \end{equation*}
    % But both $\pm 1$ belong to $U'$ as $\sqrt{-2}$ is a unit in $\mathcal{O}_{K,p}$ for all $p \neq 2$. Hence, $[U:U']=1$. Finally, $\mu=1$ in this case because $2 =(\sqrt{-2})^2$ is ramified. Therefore:
    % \begin{equation*}
    %     [H(\xi)_{\mathbb{A}}:H(\xi)_{\mathbb{Q}}(H(\xi)_{\mathbb{A}}\cap C)] = (1/1)\cdot 2^{1-1-1}\cdot 2 \cdot 1^{-1} = 1.
    % \end{equation*}
    \item $t=3$. Here $\mathfrak{a} = 3\mathbb{Z}$ and $\mathfrak{a}^{*}=3\mathbb{Z}$ because $r_{3}(\xi)= -\frac{4}{3} \in \frac{1}{3}\mathbb{Z}_{3}$ but $r_{3}(\xi) \notin \mathfrak{C}_3$, as $\mathfrak{C}_3 = \mathbb{Z}_3$. By \eqref{c}, we get $\mathfrak{c} = 2\mathbb{Z}$. In this case, $\mathcal{O}_K^{\times} = \{\pm 1,\pm \omega, \pm \omega^2\}$, where $\omega = \frac{1}{2}(1 + \sqrt{-3})$. Then,
    \begin{equation*}
        U' = \{x \in \mathcal{O}_K^{\times} \mid x-1 \in 2\sqrt{-3}\mathcal{O}_{K,p} \textup{ }\forall p \neq 3\}.
    \end{equation*}
    But for $p \neq 3$, $\sqrt{-3}$ is a unit in $\mathcal{O}_{K,p}$. So, the condition becomes $x-1 \in 2\mathcal{O}_{K,p} \textup{ }\forall p \neq 3$. We can then check that this is true only for $\pm 1 \in U$. Therefore, $[U:U'] = 3$. Also, $\mu=1$ in this case, because $3$ ramifies in $K$. Also, as $-3 \equiv 5 \pmod 8$, we have that $2$ remains prime in $K$, hence
    \begin{equation*}
        [H(\xi)_{\mathbb{A}}:H(\xi)_{\mathbb{Q}}(H(\xi)_{\mathbb{A}}\cap C)] = 1 \cdot 2^{1-1}\cdot 3^{-1}\cdot 2\cdot (1+1/2) = 1.
    \end{equation*}
    % \item $t >3$. We observe that for all the remaining $t$ for which $c_K=1$, we have $-t \equiv 5 \pmod 8$ and in particular $-t \equiv 1 \pmod 4$. Also, we have that all $t's$ are prime and therefore, using similar reasoning as for $t=3$, we have $\mathfrak{a}^{*} = t\mathbb{Z}$ and also $\mathfrak{c}=2\mathbb{Z}$. The difference here is that $\mathcal{O}_{K}^{\times} = \{\pm 1\}$ and therefore $[U:U']\in \{1,2\}$. Now, as $-t \equiv 5 \pmod 8$, we get that $2$ remains prime in $\mathcal{O}_K$ and also $\mu=1$ as $t$ is ramified in $K$. Hence
    % \begin{equation*}
    %     [H_{\mathbb{A}}:H(H_{\mathbb{A}}\cap C)] = (1/1)\cdot 2^{1-1-1}\cdot 2 \cdot [U:U']^{-1} \cdot 2 \cdot (1+1/2) = \frac{3}{[U:U']}
    % \end{equation*}
    % which can only be $3$.
    \item $t=2$ or $t>3$ with $-t \nequiv 1 \pmod 4$. We write $t=p_1\cdots p_k$ with $p_i$ distinct prime factors. In this case, $\mathfrak{e}=\mathfrak{c}=\mathbb{Z}$ and $\mathfrak{a}=t\mathbb{Z}$. For all $p_i$, we have that $r_{p_{i}}(\xi)$ satisfies the conditions of Theorem \ref{rank 1 main theorem} and therefore $\mathfrak{a}^{*}=t\mathbb{Z}$. We have $\mu=k$ as each $p_i$ is ramified in $K$. Now $U = \mathcal{O}_K^{\times} = \{\pm 1\}$ and
    \begin{equation*}
        U'=\{x \in \mathcal{O}_K^{\times} \mid x - 1 \in 2\sqrt{-t}\mathcal{O}_{K,p} \textup{ }\forall p \neq p_i, i=1,\cdots, k\}.
    \end{equation*}
    But for all $p \neq p_i$, $\sqrt{-t}$ is a unit in $\mathcal{O}_{K,p}$ and therefore $\pm 1 \in U'$, i.e. $[U:U']=1$. We then obtain:
    \begin{equation*}
        [H(\xi)_{\mathbb{A}}:H(\xi)_{\mathbb{Q}}(H(\xi)_{\mathbb{A}}\cap C)] = c_K \cdot 2^{1-k}.
    \end{equation*}
    Therefore, this is $1$ iff $c_K = 2^{k-1}$. Hence, the answer in this case is the number fields $K = \mathbb{Q}(\sqrt{-t})$ that satisfy $c_{K}=2^{k-1}$, where $k$ is the number of prime factors of $t$ and $-t \nequiv 1 \pmod 4$. For example, when $c_K = 2$, $t$ must have 2 prime factors, and examples would be $t=2, 6, 10,$ etc.
    \item $t>3$ with $-t \equiv 1 \pmod 4$. We write $t=p_1\cdots p_k$ with $p_i$ primes. In these cases, similarly to the case $t=3$, we have $\mathfrak{a} = t\mathbb{Z}$, $\mathfrak{c} = 2\mathbb{Z}$ and $\mathfrak{a}^{*}=t\mathbb{Z}$. Also, $\mu=k$ because each prime $p_i$ is ramified in $K$. Now, $U = \mathcal{O}_K^{\times} = \{\pm{1}\}$ and then
    \begin{equation*}
        U' = \{x \in \mathcal{O}_K^{\times} \mid x-1 \in 2\sqrt{-t}\mathcal{O}_{K,p} \textup{ }\forall p\neq p_i, i=1,\cdots,k\}.
    \end{equation*}
    But for $p \neq p_i$, $\sqrt{-t}$ is a unit in $\mathcal{O}_{K,p}$ and therefore $\pm 1 \in U'$, which means $[U:U']=1$. Now, if $-t \equiv 5 \pmod 8$, we get that $2$ is inert in $K$ and if $-t \equiv 1 \pmod 8$, $2$ splits in $K$. Therefore
    \begin{multline*}
        \,\,\,\,\,\,\,\,\,\,\,\,\,\,\ [H(\xi)_{\mathbb{A}}:H(\xi)_{\mathbb{Q}}(H(\xi)_{\mathbb{A}}\cap C)]=\\=
        \begin{cases}
            c_K \cdot 2^{1-k} \cdot 2 \cdot (1 + 1/2) = 3c_K \cdot 2^{1-k} & \textup{ if } -t \equiv 5 \pmod 8\\
            c_K \cdot 2^{1-k} \cdot 2 \cdot (1-1/2) = c_K \cdot 2^{1-k} & \textup{ if } -t \equiv 1 \pmod 8.
        \end{cases}
    \end{multline*}
    In the first case the index cannot be $1$, while in the second case, we must have $c_{K} = 2^{k-1}$. Therefore, the answer in this case is $t$ such that $-t \equiv 1 \pmod 8$ so that if $K = \mathbb{Q}(\sqrt{-t})$ we have $c_K = 2^{k-1}$, where $k$ is the number of distinct prime factors of $t$. For example, the only example for $k=2$ is $t=15$.
\end{itemize}
Hence, we arrive at the following Proposition:
\begin{proposition}\label{class_number_1}
    Let $t$ be one of the following:
    \begin{itemize}
        \item $t=1,3$.
        \item $t \nequiv 3 \pmod 4$ and if $t=p_1\cdots p_k$, $K:=\mathbb{Q}(\sqrt{-t})$, we have $c_K = 2^{k-1}$.
        \item $t \equiv 7 \pmod 8$ and if $t=p_1\cdots p_k$, $K:=\mathbb{Q}(\sqrt{-t})$, we have $c_K = 2^{k-1}$.
    \end{itemize}
    Then, with the notation as above, we have $[H(\xi)_{\mathbb{A}}:H(\xi)_{\mathbb{Q}}(H(\xi)_{\mathbb{A}}\cap C)]=1$.
\end{proposition}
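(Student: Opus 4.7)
The proof will proceed by invoking Shimura's explicit index formula \cite[Theorem 5.7 and formula (5.11)]{shimura_diophantine_2006}, which expresses $[H(\xi)_{\mathbb{A}}:H(\xi)_{\mathbb{Q}}(H(\xi)_{\mathbb{A}}\cap C)]$ as a product of the class number $c_K$ of $K = \mathbb{Q}(\sqrt{-t})$, a power of two governed by the archimedean ramification $\nu$ and the number $\mu$ of finite primes at which $K_p$ is a field, the index $[U:U']$ of a unit subgroup cut out by congruences modulo the conductor $\mathfrak{c}$, and a local correction factor at each prime dividing the ideal $\mathfrak{a}$ determined by $\xi$. The plan is to compute each of these invariants and to substitute into Shimura's formula, verifying that the resulting product equals $1$ in precisely the three cases of the proposition.

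First I would collect the $t$-independent data. Since $\phi_0$ is everywhere locally isotropic, the quaternion algebra $B$ associated with $(V_0, \phi_0)$ splits at every finite place, so $\mathfrak{e} = \mathbb{Z}$. With $\xi = (1,0,1)^{t}$ one has $\phi_0[\xi] = 1$ and $\phi_0(\xi, L_0) = \tfrac{1}{2}\mathbb{Z}$, and the different $\mathfrak{d}_{K/\mathbb{Q}}$ is $2\sqrt{-t}\mathcal{O}_K$ or $\sqrt{-t}\mathcal{O}_K$ according to whether $-t \not\equiv 1 \pmod 4$ or $-t \equiv 1 \pmod 4$. Squarefreeness of $t$ gives $\mathfrak{a} = t\mathbb{Z}$ and, after checking the defining conditions of $\mathfrak{a}^{*}$ locally at each $p \mid t$, also $\mathfrak{a}^{*} = t\mathbb{Z}$. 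The conductor equation $\mathfrak{c}^{2} N_{K/\mathbb{Q}}(\mathfrak{d}_{K/\mathbb{Q}}) = \mathfrak{a}\phi_0[\xi]\phi_0(\xi, L_0)^{-2}$ then forces $\mathfrak{c} = \mathbb{Z}$ when $-t \not\equiv 1 \pmod 4$ and $\mathfrak{c} = 2\mathbb{Z}$ when $-t \equiv 1 \pmod 4$. Since $K$ is imaginary quadratic we have $\nu = 1$, each prime divisor of $t$ ramifies so $\mu$ equals the number $k$ of such primes, and $[\mathbb{Z}^{\times}: N_{K/\mathbb{Q}}(\mathcal{O}_K^{\times})] = 2$.

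Then I would run the case analysis parallel to the discussion preceding the statement. For $t \in \{1,2,3\}$, the unit groups $\mathcal{O}_K^{\times}$ are small, $[U:U']$ is computed directly, and substitution into the formula (using that $2$ is inert in $\mathbb{Q}(\sqrt{-3})$ to produce the local factor $1 + 1/2$ in the case $t = 3$) yields $1$ on the nose. For squarefree $t > 3$ we have $U = \{\pm 1\}$ and $\sqrt{-t}$ is a unit at every prime away from $t$, so $[U:U'] = 1$. When $-t \not\equiv 1 \pmod 4$ the formula reduces to $c_K \cdot 2^{1-k}$, which equals $1$ exactly when $c_K = 2^{k-1}$. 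When $-t \equiv 1 \pmod 4$ the additional conductor contribution at $2$ is offset by the local factor $(1 - 1/2)$ if $2$ splits in $K$ (i.e.\ $-t \equiv 1 \pmod 8$), again giving $c_K = 2^{k-1}$; but if $2$ is inert (i.e.\ $-t \equiv 5 \pmod 8$) the factor becomes $(1 + 1/2)$ and the index cannot equal $1$, which explains the congruence restriction $t \equiv 7 \pmod 8$ in the last bullet.

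The main obstacle is the careful bookkeeping at the prime $2$: one has to correlate the splitting behaviour of $2$ in $K$, the value of $\mathfrak{c}_2$, and the congruence conditions defining $U'$, and feed the three pieces consistently into Shimura's formula. Once this is done in each sub-case, the claimed cancellations between $c_K$, the power of two, and the Euler factors at the ramified finite primes are purely arithmetic and produce the list of admissible $t$ stated in the proposition.
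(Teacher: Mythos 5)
Your proposal is correct and follows essentially the same route as the paper: the paper's argument is precisely the computation of the invariants $\mathfrak{e}, \mathfrak{a}, \mathfrak{a}^{*}, \mathfrak{d}_{K/\mathbb{Q}}, \mathfrak{c}, \nu, \mu, [U:U']$ entering Shimura's index formula \cite[Theorem 5.7, (5.11)]{shimura_diophantine_2006}, followed by the same case split at $t=1,2,3$ and then $t>3$ according to the residue of $-t$ modulo $4$ and modulo $8$, with the behaviour of $2$ in $K$ supplying the factors $(1\pm 1/2)$ exactly as you describe. The only thing to make sure of when writing this up in full is the explicit verification of $\mathfrak{a}^{*}=t\mathbb{Z}$ via the local conditions on $r_{p}(\xi)$ at each $p\mid t$, which you flag but do not carry out; the paper does this by checking $r_p(\xi)\notin\mathfrak{C}_p$ in each case.
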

Let now $t$ be one of the above. The set of primes $\mathcal{P}$ is as described in Theorem \ref{dirichlet} and we include the prime $2$. We aim to give an Euler product expression for $\mathcal{D}_{F,\mathcal{P}_{k,-q,r}, \mathcal{P}}(s)$. In particular, from Theorem \ref{dirichlet}, we need to give an Euler product expression for
\begin{equation*}
    \zeta_{\xi, \mathcal{P}}(s)= \sum_{\substack{N=1\\(N,p)=1 \forall p\in \mathcal{P}}}^{\infty} n(\xi; N)N^{-s},
\end{equation*}
where now $n(\xi; N) = \#\{s \in \mathbb{Z}/2tN\mathbb{Z} \mid s^2 \equiv -4t \pmod{4tN}\}$, as we explained in the beginning of Section \ref{explicit examples}.\\

Now, as $t$ is square-free, we obtain from $s^2 \equiv -4t \pmod{4tN}$, that $2t \mid s$, so it suffices to look for the number of solutions of the congruence
\begin{equation*}
    ts^2 \equiv -1 \pmod{N},
\end{equation*}
with $s \pmod N$. This last number of solutions is multiplicative in $N$, so we can write
\begin{equation*}
    \zeta_{\xi,\mathcal{P}}(s) = \prod_{p \notin \mathcal{P}}\left(\sum_{k=0}^{\infty}n(\xi;p^{k})p^{-ks}\right).
\end{equation*}
Now, for all $p \notin \mathcal{P}$, we have $(p,t)=1$, so from \cite[Proposition 14]{congruences} (as we assume $2 \in \mathcal{P}$), we get
\begin{equation*}
    n(\xi;p^{k}) = 1 + \left(\frac{-t}{p}\right),
\end{equation*}
for all $k \geq 1$, where $\left(\frac{\cdot}{p}\right)$ denotes the Legendre symbol. Therefore, if we define $\chi_{t}(p):=\left(\frac{-t}{p}\right)$ for $p \not \in \mathcal{P}$, we deduce that (bear in mind that $\chi_{t}^2 = 1$)
\begin{equation}\label{zeta_xi}
    \zeta_{\xi,\mathcal{P}}(s) = \zeta_{\mathcal{P}}(s)\zeta_{\mathcal{P}}(2s)^{-1}\zeta_{\mathcal{P}}(s,\chi_{t}),
\end{equation}
where $\zeta_{\mathcal{P}}(s,\chi_{t}) := \prod_{p \not \in \mathcal{P}}(1-\chi_t(p)p^{-s})^{-1}$.
Therefore, we have the following Theorem.
\begin{theorem}\label{rank 1}
    Let $S=2t$, with $t$ being chosen as in Proposition \ref{class_number_1}. Assume $\mathcal{P}$ is a finite set of primes, containing the primes described in Theorem \ref{dirichlet}, the prime $2$, and the primes so that the conditions of \cite[Proposition 5.13]{shimura_orthogonal} are satisfied for all $p \not \in \mathcal{P}$. Moreover, for all $p \not \in \mathcal{P}$, we define
    \begin{equation*}
        \chi_{t}(p) := \left(\frac{-t}{p}\right), \ \psi(p):=\left(\frac{-1}{p}\right).
    \end{equation*}
     If $F$ and $\mathcal{P}_{k,-q,r}$ are as in Theorem \ref{dirichlet} and $\xi = (1,0,1)^{t}$ (in particular $A(\xi) \neq 0$), we have
    \begin{equation*}
        \mathcal{D}_{F,\mathcal{P}_{k,-q,r},\mathcal{P}}(s) = A(\xi)L_{\mathcal{P}}\left(F;s-k+3/2\right)\zeta_{\mathcal{P}}(2s-2k+4)^{-1}\frac{\zeta_{\mathcal{P}}(s-k+2,\chi_{t})}{\zeta_{\mathcal{P}}(s-k+2,\psi)}.
    \end{equation*}
\end{theorem}
\begin{proof}
    The proof follows by Theorem \ref{dirichlet} after choosing $f$ to be the constant $\textbf{1}$ on $H(\xi)_{\mathbb{A}}$. We have computed $\zeta_{\xi,\mathcal{P}}(s)$ in \eqref{zeta_xi} and
    \begin{equation*}
        L_{\mathcal{P}}(\textbf{1}, s-k+2) = \zeta_{\mathcal{P}}(s-k+2, \psi)\zeta_{\mathcal{P}}(s-k+2),
    \end{equation*}
    as this can be computed by \cite[Proposition 5.15]{shimura_orthogonal}, because $\mathcal{P}$ is chosen so that conditions of \cite[Proposition 5.13]{shimura_orthogonal} are satisfied.
\end{proof}
\begin{remark}
    In the case $t=1$, we recover (partly) the result of Kohnen and Skoruppa in \cite{kohnen_skoruppa}. In particular, it is clear that with the above approach, some Euler factors might be missing. However, the benefit is that we also obtain results for $t > 1$. These could be interpreted as results for modular forms on a paramodular group (cf. \cite[Corollary 6]{krieg_integral_orthogonal}, \cite{paramodular_hecke}).
\end{remark}
\subsection{The rank $n \geq 2$ case}\label{even rank}
In the rank $n \geq 2$ case, a Theorem like \cite[Theorem 5.7]{shimura_diophantine_2006} is not available. For this reason, we seek examples of matrices $S$ so that the following conditions hold:
\begin{enumerate}
    \item The lattice $L = \mathbb{Z}^{n}$ is $\mathbb{Z}$-maximal.
    \item With the notation as in Section \ref{relation to the class number}, $L_0 \cap W$ is a $\mathbb{Z}$-maximal lattice in $W$ and if $D=\{\alpha \in H(\xi)_{\mathbb{A}} \mid \alpha (L_0\cap W) = L_0\cap W\}$, we have $D = H(\xi)_{\mathbb{A}} \cap C$.
    \item The number of classes in the genus of maximal lattices (this is independent of the choice of the maximal lattice, see \cite[Paragraph 9.7]{shimura2004arithmetic}) in $H(\xi)_{\mathbb{Q}} = \hbox{SO}^{\psi}(W)$ is $1$. Here, $\psi := \phi_0 \mid _{W}$. 
\end{enumerate}
We will show that for rank $n = 2,4,6,8$, there is at least one positive definite even symmetric matrix $S$ of rank $n$, so that the above conditions are satisfied. We start with the following lemma:
%The reasons why we only provide these examples is explained in Section \ref{conclusions}.
\begin{lemma}\label{H group}
    We have $H(\xi)_{\mathbb{Q}} = \textup{SO}^{\psi}(W)$ and $\psi$ can be represented by the matrix
    \begin{equation}\label{matrix T}
        T = \frac{1}{2}\m{-2& \\ & -S}.
    \end{equation} 
\end{lemma}
\begin{proof}
    We have $W = \{x \in V_0 \mid \phi_0(x,\xi)=0\}$. Now $\xi \in U:= \mathbb{Q}e_1+\mathbb{Q}e_{n+2}$ and $W = (W\cap U) \oplus U^{\perp}$. But on $U^{\perp}$, $\phi$ is represented by $-S$. Moreover, $W\cap U$ has dimension $1$ and if we write $x = \lambda e_1+\mu e_{n+2} \in W\cap U$, we have $\phi_0(x,\xi) = \lambda+\mu$ and so $W\cap U$ is spanned by $e_1-e_{n+2}$. By evaluating $\phi_0[e_1-e_{n+2}]=-1$, the lemma follows.
\end{proof}
We claim the following choices for the matrix $S$ satisfy the conditions $(1)-(3)$ above.
\begin{itemize}
    \item $n=2$: $S = \m{2 & -1 \\ -1 & 2}, \m{2 &-1\\-1&8}$ with determinants $3, 15$ respectively.
    \item $n=4$: $S = \m{2 & -1 & -1 & -1 \\ -1 & 2 & 1 & 0\\-1 & 1 & 2 & 0\\-1&0&0&2}, \m{2&-1&0&0\\-1&2&0&0\\0&0&2&-1\\0&0&-1&2}$ with determinants $5,9$ respectively.
    \item $n=6$: $S = \m{2&1&-1&1&-1&1\\1&2&0&1&-1&1\\-1&0&2&-1&1&0\\1&1&-1&2&-1&0\\-1&-1&1&-1&2&0\\1&1&0&0&0&2}$ with determinant $3$.
    \item $n=8$: $S=\m{2&-1&1&1&-1&-1&1&-1\\-1&2&0&-1&0&1&-1&1\\1&0&2&1&-1&0&0&0\\1&-1&1&2&-1&-1&1&-1\\-1&0&-1&-1&2&1&-1&1\\-1&1&0&-1&1&2&-1&1\\1&-1&0&1&-1&-1&2&-1\\-1&1&0&-1&1&1&-1&2}$ with determinant $1$.
\end{itemize}
\begin{remark}
    The matrices $S$ of rank $2$ correspond to the unitary groups of the imaginary quadratic fields $\mathbb{Q}(\sqrt{-3}), \mathbb{Q}(\sqrt{-15})$ respectively (see \cite[Example 1.6.6, (v)]{eisenstein_thesis}).
    %there is an isomorphism between the orthogonal groups corresponding to $S=(2 1 \\ 1 2)$ and $T = (2 -1 \\ -1 2)$, given by $M \mapsto M[J]$, where $J$ is the symplectic matrix (0 -1 \\ 1 0). 
\end{remark}
Let us first check the conditions $(1)$ and $(3)$, right before Lemma \ref{H group}.
Condition $(1)$ follows by \cite[Proposition 1.6.12]{eisenstein_thesis} for the matrices with square-free determinant and \cite[Lemma 1.6.5, (ii)]{eisenstein_thesis} for the matrix of determinant $9$.\\

Condition $(3)$ follows by \cite[Section 8]{hanke2011enumerating}, because for the above choices of $S$, the matrix $T$ of \eqref{matrix T} corresponds to the following quadratic forms:
\begin{itemize}
    \item $n=2$: Examples 4, 26 in matrices of $3$ variables in \cite[Section 8]{hanke2011enumerating}.
    \item $n=4$: Examples $3,5$ in matrices of $5$ variables in \cite[Section 8]{hanke2011enumerating}.
    \item $n=6$: Example $3$ in matrices of $7$ variables in \cite[Section 8]{hanke2011enumerating}.
    \item $n=8$: Example $1$ in matrices of $9$ variables in \cite[Section 8]{hanke2011enumerating}.
\end{itemize}
All these examples correspond to one class in the genus of the standard lattice $\mathbb{Z}^{n+1}$, in the cases when it is maximal. This can also be seen by a simple computation in MAGMA for example. As we mentioned above, this shows that every maximal lattice has one class in its genus.
%Hanke finds all the cases of quadratic spaces $(V,\phi)$ so that $L = \mathbb{Z}^{n}$ is maximal and the genus is $1$. Why? Because $\mathbb{Z}^{n}$ is maximal only for $1\leq n \leq 3$ and the genus of maximal lattices for $\phi[x] = \sum_{i=1}^{n}x_i^{2}$ is $1$ for $1 \leq n \leq 9$. However, the only case presented in the paper with that specific $\phi$ is the case $n=3$, i.e. $\mathbb{Z}^{n}$ has to be both maximal and have just a single classes in its genus.
%However, our examples work. Indeed, in the examples Hanke provides, $\mathbb{Z}^{n}$ is maximal and our lattice is also maximal, and the number of classes in genus of maximal lattices is the same.
Finally, we need to check condition $(2)$. We will use \cite[Proposition 11.12]{shimura2004arithmetic}. We have the following local result.
\begin{proposition}\label{local_class_number}
    Let $S$ denote any of the matrices above. Let $D_p = \{\alpha \in H(\xi)_{p} \mid \alpha (L_{0,p}\cap W_p) = L_{0,p}\cap W_p\}$ for each prime $p$. We then have that $L_{0,p} \cap W_p$ is a $\mathbb{Z}_p$-maximal lattice in $W_p$ and also
    \begin{equation*}
        D_p = H(\xi)_{p} \cap C_p,
    \end{equation*}
    where $C_p = \{x \in G_p^{*} \mid xL_{0,p} = L_{0,p}\}$.
\end{proposition}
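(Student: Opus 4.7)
The plan is to treat the two assertions prime by prime, splitting into the easy case $p \neq 2$ and the genuinely delicate case $p = 2$. The easy direction $H(\xi)_p \cap C_p \subseteq D_p$ is immediate at every prime: since $\phi_0[\xi]=1$ and $\xi$ is fixed by every $\alpha \in H(\xi)_p$, such an $\alpha$ preserves the orthogonal complement $W_p$ in the orthogonal decomposition $V_{0,p} = \mathbb{Q}_p\xi \oplus W_p$, so $\alpha L_{0,p} = L_{0,p}$ forces $\alpha(L_{0,p} \cap W_p) = L_{0,p} \cap W_p$. The reverse inclusion together with maximality requires a closer look at how $L_{0,p} \cap W_p$ sits inside $L_{0,p}$.

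A direct calculation with $\xi = e_1 + e_{n+2}$ gives $W = \{x \in V_0 \mid x_0 + x_{n+1} = 0\}$, and $L_0 \cap W$ has $\mathbb{Z}$-basis $\eta := e_1 - e_{n+2}$ together with $e_2, \ldots, e_{n+1}$, whose Gram matrix is exactly the matrix $T$ of Lemma \ref{H group}. For odd $p$, the identities $2e_1 = \xi + \eta$ and $2e_{n+2} = \xi - \eta$ (together with $2 \in \mathbb{Z}_p^\times$) produce the orthogonal decomposition
\[
L_{0,p} = \mathbb{Z}_p\xi \oplus (L_{0,p} \cap W_p),
\]
from which both claims follow at once. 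For the stabilizer equality, any $\alpha \in D_p$ fixes $\xi$ and stabilizes $L_{0,p} \cap W_p$, hence preserves the direct sum $L_{0,p}$. For maximality, any $\psi$-integer overlattice $M_p \supsetneq L_{0,p} \cap W_p$ yields a $\phi_0$-integer overlattice $\mathbb{Z}_p\xi \oplus M_p \supsetneq L_{0,p}$ (using $\phi_0[\xi]=1$ and orthogonality of $\xi$ with $W_p$), contradicting the $\mathbb{Z}_p$-maximality of $L_{0,p}$ inherited from the $\mathbb{Z}$-maximality of $L_0$ via \cite[Lemma 6.3]{shimura2004arithmetic}.

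The main obstacle is the prime $p = 2$, where the clean orthogonal splitting fails: $L_{0,2}$ strictly contains $\mathbb{Z}_2\xi \oplus (L_{0,2} \cap W_2)$ as an index-$2$ overlattice generated by $e_1 = (\xi+\eta)/2$. The strategy here is to apply \cite[Proposition 11.12]{shimura2004arithmetic} at $p=2$, which reduces both assertions to explicit conditions on the Jordan decomposition of $(W_2, \psi)$ and on the $2$-adic position of $\xi$ in $L_{0,2}$. Every matrix $S$ on our list has odd determinant and even diagonal, so $S$ is $\mathbb{Z}_2$-unimodular with alternating reduction modulo $2$; in particular $n$ is forced to be even and $S$ is $\mathbb{Z}_2$-equivalent to an orthogonal sum of hyperbolic planes $\m{0&1\\1&0}$. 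With this explicit local form in hand, verifying the hypotheses of Shimura's proposition amounts to a finite computation for each of the listed matrices, delivering the $\mathbb{Z}_2$-maximality of $L_{0,2}\cap W_2$ together with the stabilizer identity $D_2 = H(\xi)_2 \cap C_2$ in each case.
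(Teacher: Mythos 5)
Your treatment of the odd primes is correct and takes a genuinely different, more elementary route than the paper. The splitting $L_{0,p}=\mathbb{Z}_p\xi\oplus(L_{0,p}\cap W_p)$ for $p\neq 2$ (valid because $2\in\mathbb{Z}_p^{\times}$ and $e_1=(\xi+\eta)/2$, $e_{n+2}=(\xi-\eta)/2$) does yield both the maximality of $L_{0,p}\cap W_p$ and the equality $D_p=H(\xi)_p\cap C_p$ in one stroke. The paper instead invokes \cite[Proposition 11.12]{shimura2004arithmetic} uniformly at every prime, which forces it to verify $t_p>1$ case by case at the odd primes $p\mid\det S$ in order to land in part (iii)(1); your argument sidesteps that entire case analysis, which is a real simplification at those primes.

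At $p=2$, however --- which you correctly identify as the crux, the splitting degenerating to an index-$2$ sublattice --- your proof stops short of a proof. You reduce to \cite[Proposition 11.12]{shimura2004arithmetic} but never verify its hypotheses, asserting only that this ``amounts to a finite computation''; that verification is precisely the missing content. Moreover, the one concrete structural claim you do make, that each listed $S$ is $\mathbb{Z}_2$-equivalent to an orthogonal sum of hyperbolic planes, is false: for $S=\m{2&-1\\-1&2}$ we have $\det S=3$, while a hyperbolic plane has determinant $-1$, and $3\cdot(-1)^{-1}\equiv 5\pmod 8$ is not a square in $\mathbb{Z}_2^{\times}$ (an even $\mathbb{Z}_2$-unimodular lattice is in general a sum of copies of $\m{0&1\\1&0}$ \emph{and} $\m{2&1\\1&2}$). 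Fortunately no such normal form is needed, and the check is not really matrix-dependent: since $2\nmid\det S$ for every listed $S$, one has $L_{0,2}^{*}=L_{0,2}$; since $\dim V_0=n+2$ is even, the anisotropic dimension satisfies $t_2\neq 1$; and $4\phi_0[\xi]^{-1}\phi_0(\xi,L_{0,2})^{2}=4\cdot\tfrac{1}{4}\mathbb{Z}_2=\mathbb{Z}_2$ because $\phi_0[\xi]=1$ and $\phi_0(\xi,L_{0,2})=\tfrac{1}{2}\mathbb{Z}_2$. With these in hand, \cite[Proposition 11.12, (iii), (2)]{shimura2004arithmetic} gives the $\mathbb{Z}_2$-maximality of $L_{0,2}\cap W_2$ and \cite[Proposition 11.12, (iv)]{shimura2004arithmetic} gives $D_2=H(\xi)_2\cap C_2$; this is exactly how the paper closes the case $p=2$, and it is what your write-up needs to supply.
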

\begin{proof}
    Our proof is based on \cite[Proposition 11.12]{shimura2004arithmetic}. We will first establish the following claim:
    \begin{center}
    $L_{0,p}^{*} \neq L_{0,p} \iff p \mid \det (S)$.
    \end{center}
    This follows from the fact that $L_{0,p}^{*} = S_0^{-1}L_{0,p}$ and that 
    \begin{equation*}
        S_0^{-1} = \m{&&1\\&-S^{-1}&\\1&&&}, \textup{ } S^{-1} = \frac{1}{\det(S)}\textup{adj}(S).
    \end{equation*}
    % Our proof is based on \cite[Proposition 11.12]{shimura2004arithmetic}. We will first establish the following claim:
    % \begin{center}
    % $L_{0,p}^{*} \neq L_{0,p} \iff p \mid \det (S)$.
    % \end{center}
    % Indeed, assume $p \nmid \det(S)$. Then, as $L_{0,p} \subset L_{0,p}^{*}$ always, suffices to show that $L_{0,p}^{*} \subset L_{0,p}$ too. But $L_{0,p}^{*} = S_0^{-1}L_{0,p}$ and so if $y \in L_{0,p}^{*}$, we write $y = S_0^{-1}x$ with $x \in L_{0,p}$ and so the result follows as $p \nmid \det(S)$ (we have $S_0^{-1} = \m{&&1\\&-S^{-1}&\\1&&&}$).\\\\
    % On the other hand, if $p \mid \det(S)$, we can always find $y \in L_{0,p}^{*} \backslash L_{0,p}$.\\\\
    We remind ourselves that $\phi_0[\xi]=1$ and $\phi_0(\xi, L_{0,p}) = \frac{1}{2}\mathbb{Z}_p$ for all primes $p$. Therefore, $\phi_0(\xi, L_{0,p})^{2} = \phi_0[\xi]\mathbb{Z}_p$ for all $p \neq 2$. Moreover, $L_{0,2}^{*}=L_{0,2}$ for every choice of $S$, as $2 \nmid \det(S)$ for any $S$. This means \cite[Proposition 11.12]{shimura2004arithmetic} is applicable in every case.\\\\
    Let now $t_p$ denote the dimension of the maximal anisotropic subspace of $(\mathbb{Q}_p^{n+2}, \phi_0)$ (cf. \cite[Paragraph 8.3]{shimura2004arithmetic}). For $p \nmid \det(S)$, we have $L_{0,p}^{*}=L_{0,p}$, $t_p \neq 1$ as $n$ is even and $4\phi_0[\xi]^{-1}\phi_0(\xi, L_{0,p})^{2}=\mathbb{Z}_p$. So, by \cite[Proposition 11.12, (iii), (2)]{shimura2004arithmetic}, $L_{0,p} \cap W_p$ is $\mathbb{Z}_p$-maximal.\\\\
    If now $p \mid \det(S)$, we claim $t_p > 1$ and therefore \cite[Proposition 11.12, (iii), (1)]{shimura2004arithmetic} will be applicable. We show this case by case. Define $K_0 := \mathbb{Q}_p(\sqrt{\delta})$, where $\delta := (-1)^{(n+2)(n+1)/2}\det(\phi_0)$. We note that from the proof of \cite[Lemma 3.3]{shimura_classification_quadratic_forms}, we have $t_p=2$ if and only if $K_0 \neq \mathbb{Q}_p$. 
    \begin{itemize}
        \item $S = \m{2&-1\\-1&2}$. Then, we claim that $t_3 = 2$. Now, $\det(\phi_0) = -3/2^{4}$ and then $K_0 =\mathbb{Q}_{3}(\sqrt{\det(\phi_0)}) = \mathbb{Q}_3(\sqrt{-3}) \neq \mathbb{Q}_3$.
        \vspace{0.1cm}
        \item $S = \m{2&-1\\-1&8}$. In this case, $\det(\phi_0) = -15/2^{4}$ and we claim $t_3 = t_5 = 2$. But again, if $p \in \{3,5\}$, we have $K_0 = \mathbb{Q}_p(\sqrt{\det{\phi_0}}) = \mathbb{Q}_p(\sqrt{-15}) \neq \mathbb{Q}_p$ by taking valuations (for example, if $\sqrt{-15} \in \mathbb{Q}_3$, then $-15 = u^2$ for some $u \in \mathbb{Q}_3$ and so $2v_{3}(u)=1$, contradiction).
        \vspace{0.1cm}
        \item $S = \m{2 & -1 & -1 & -1 \\ -1 & 2 & 1 & 0\\-1 & 1 & 2 & 0\\-1&0&0&2}$. In this case, $\det(\phi_0) = -5/2^{6}$ and so $K_0 = \mathbb{Q}_5(\sqrt{5}) \neq \mathbb{Q}_5$, so $t_5=2$.
        \vspace{0.1cm}
        \item $S = \m{2&-1&0&0\\-1&2&0&0\\0&0&2&-1\\0&0&-1&2}$. In this case, $K_0 = \mathbb{Q}_3$, and using a software (e.g. SageMath) we can compute $t_3=4$.
        \vspace{0.1cm}
        \item $S = \m{2&1&-1&1&-1&1\\1&2&0&1&-1&1\\-1&0&2&-1&1&0\\1&1&-1&2&-1&0\\-1&-1&1&-1&2&0\\1&1&0&0&0&2}$. In this case, $\det(\phi_0) = -3/2^{8}$ and so again $K_0 = \mathbb{Q}_3(\sqrt{-3}) \neq \mathbb{Q}_3$. Therefore, $t_3=2$.
        \vspace{0.1cm}
        \item In the $n=8$ case, we have that $\det(S)=1$, so there are no primes to check ($L_{0,p}^{*}=L_{0,p}$ for all primes $p$).
    \end{itemize}
    Finally, the fact that $D_p = H(\xi)_p\cap C_p$ follows from \cite[Proposition 11.12, (iv)]{shimura2004arithmetic}, as $t_p \neq 1$ always, because $n$ is even.
\end{proof}
We are now ready to give the global statement. 
\begin{proposition}
    With $S$ be any of the matrices above, we have that $L_0 \cap W$ is $\mathbb{Z}$-maximal in $W$ and if $D=\{\alpha \in H(\xi)_{\mathbb{A}} \mid \alpha(L_0\cap W) = L_0\cap W\}$, then $D = H(\xi)_{\mathbb{A}} \cap C$.
\end{proposition}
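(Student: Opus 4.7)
The plan is to derive both assertions from the local statements established in Proposition \ref{local_class_number} via the local-global principle for $\mathbb{Z}$-lattices recorded in Proposition \ref{lattices}. The key preliminary observation is the compatibility of intersection with localisation: for every finite prime $p$ one has $(L_0\cap W)_p = L_{0,p}\cap W_p$. This holds because $W$ is cut out inside $V_0$ by the single linear equation $\phi_0(\cdot,\xi)=0$, so tensoring with the flat $\mathbb{Z}_p$-algebra $\mathbb{Z}_p$ preserves the intersection.

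With this identification in place, the $\mathbb{Z}$-maximality of $L_0\cap W$ in $(W,\psi)$ is equivalent to the $\mathbb{Z}_p$-maximality of each $(L_0\cap W)_p = L_{0,p}\cap W_p$. The reduction proceeds via Proposition \ref{lattices}: if a strict overlattice $L_0\cap W \subsetneq M \subset W$ on which $\psi[\cdot]\in\mathbb{Z}$ existed, then at some prime $p$ we would obtain $(L_0\cap W)_p\subsetneq M_p$ with $\psi[\cdot]\in\mathbb{Z}_p$ throughout, contradicting the local maximality furnished by Proposition \ref{local_class_number}. Conversely, a global maximal lattice containing $L_0\cap W$ is automatically locally maximal, so the two notions agree, and the first conclusion of Proposition \ref{local_class_number} delivers the desired $\mathbb{Z}$-maximality.

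For the second assertion, fix $\alpha \in H(\xi)_{\mathbb{A}}$. By the remark following Proposition \ref{lattices} describing $\alpha L$, the equality $\alpha(L_0\cap W) = L_0\cap W$ of $\mathbb{Z}$-lattices in $W$ is equivalent to the local condition $\alpha_p(L_{0,p}\cap W_p) = L_{0,p}\cap W_p$ for every finite $p$, which is by definition the statement $\alpha_p\in D_p$ for all $p$. Applying Proposition \ref{local_class_number} converts each local stabiliser into $D_p = H(\xi)_p \cap C_p$, so $\alpha \in D$ if and only if $\alpha_p \in H(\xi)_p \cap C_p$ for every $p$, which is precisely the condition $\alpha \in H(\xi)_{\mathbb{A}} \cap C$.

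The entire mathematical content is already packaged in Proposition \ref{local_class_number}; the remainder is a routine local-global bookkeeping argument. The only small subtlety, and the place that deserves a careful line of writing, is the commutation of intersection with $\mathbb{Z}_p$-span, namely $(L_0\cap W)_p = L_{0,p}\cap W_p$, which is what enables both the maximality reduction and the identification of stabilisers.
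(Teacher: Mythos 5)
Your proposal is correct and follows essentially the same route as the paper: both reduce the global statement to the local assertions of Proposition \ref{local_class_number}, using the local-global dictionary for lattices. The only cosmetic difference is that you derive the inclusion $H(\xi)_{\mathbb{A}}\cap C\subset D$ purely from the local equality $D_p=H(\xi)_p\cap C_p$ (after justifying $(L_0\cap W)_p=L_{0,p}\cap W_p$ by flatness), whereas the paper cites a global fact of Shimura for that direction; both are valid and the content is the same.
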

\begin{proof}
    The first claim follows by \cite[Proposition 1.6.9]{eisenstein_thesis}, as all the localizations are maximal by Proposition \ref{local_class_number}. For the second one, we have
    \begin{equation*}
        H(\xi)_{\mathbb{A}} \cap C = \{x \in H(\xi)_{\mathbb{A}} \mid xL_0 = L_0\}.
    \end{equation*}
    But the lattice $xL_0$ is the lattice which is defined by $(xL_0)_p = x_pL_{0,p}$ for all primes $p$. Now, if $x \in H(\xi)_{\mathbb{A}}$ with $xL_0 = L_0$, we have $x(L_0\cap W) = L_0\cap W$ (see \cite[page 104]{shimura2004arithmetic}), so $H(\xi)_{\mathbb{A}} \cap C \subset D$.\\\\
    On the other hand, for all primes $p$, let $D_p := D \cap H(\xi)_p$. Then, if $x \in D$, then $x_p \in D_p$, so $x_p \in H(\xi)_{p} \cap C_p$ by Proposition \ref{local_class_number}. Therefore, $x_pL_{0,p} = L_{0,p}$ for all primes $p$. This means that $x \in H(\xi)_{\mathbb{A}} \cap C$, as wanted.
\end{proof}
\subsection{Euler product expression for the Dirichlet series}
The question of this Section is to obtain an Euler product expression for the Dirichlet series of interest in each of the above cases and relate it to standard $L$-function attached to $F$. Again, let $\mathcal{P}$ be as in Theorem \ref{dirichlet}, containing also the prime $2$. In particular, $\mathcal{P}$ contains the primes factors of $q$, hence of $\det S$ as well. By Theorem \ref{dirichlet}, the first step is to determine $\zeta_{\xi, \mathcal{P}}(s)$. Hence, we need to compute the quantity
\begin{equation*}
    n(\xi;d) = \#\left\{s \in \mathbb{Z}^{n}/dS\mathbb{Z}^{n} \mid \frac{1}{2}qS^{-1}[s] \equiv -q \pmod{qd}\right\},
\end{equation*}
with $\xi = (1,\mathbf{0},1)^{t}$, as we have explained in the beginning of Section \ref{explicit examples}. The steps we follow are:
\begin{enumerate}
    \item Find unimodular integer matrices $P,Q$ such that $PSQ = \textup{diag}(a_1,\cdots, a_n)$, for some positive integers $a_i$.
    \item We then substitute $t = Ps \implies s = P^{-1}t$. Then, we have
    \begin{equation*}
        s-s' \in dS\mathbb{Z}^{n} \iff t-t' \in dPS\mathbb{Z}^{n} \iff t-t' \in dPSQ\mathbb{Z}^{n},
    \end{equation*}
    because $Q$ is unimodular. Hence, if $t=(t_1, \cdots, t_n)^{t}$, we consider each $t_i \pmod {da_i}$.
    \item We then solve the congruence
    \begin{equation*}
        \frac{1}{2}qS^{-1}[P^{-1}t] \equiv -q \pmod{qd}.
    \end{equation*}
\end{enumerate}
Let us now deal with the specific examples we have. In the following, let for $p \not \in \mathcal{P}$
\begin{equation*}
    \chi_{S}(p):=\left(\frac{(-1)^{n/2}\det{S}}{p}\right).
\end{equation*}
\begin{enumerate}
    \item $S = \m{2 & -1 \\ -1 & 2}$. We have $PSQ = \textup{diag}(1,3)$, with $P = \m{1&1\\1&2}$ and specified $Q$. We then have
    \begin{equation*}
        P^{-1}t = \m{2t_1-t_2\\-t_1+t_2}
    \end{equation*}
    and after substituting, the congruence of interest becomes ($q=3$ here)
    \begin{equation*}
        3t_1^2-3t_1t_2 + t_2^{2} \equiv -3 \pmod {3d}
    \end{equation*}
    with $t_1 \pmod{d}$ and $t_2 \pmod{3d}$. Now, by the form of the equation, we get $3 \mid t_2$, so the congruence becomes
    \begin{equation*}
        t_1^{2}-3t_1t_2 + 3t_2^2 \equiv -1 \pmod {d}
    \end{equation*}
    with $t_1, t_2 \pmod{d}$.  Now, we have $n(\xi;d) = N(T;d)$, where the last quantity is defined as the number of solutions $t=(t_1,t_2) \pmod{d\mathbb{Z}^2}$ to the congruence $T[t] \equiv -1 \pmod{d}$, with
    \begin{equation*}
        T = \m{1 & -3/2 \\ -3/2 & 3}.
    \end{equation*}
    But $N(T;d)$ is multiplicative in $d$. Let now $p \notin \mathcal{P}$, so that $2$ has a multiplicative inverse $\Mod{p}$ and $p \nmid \det{T}$ ($\mathcal{P}$ contains the prime factors of $\det S$ by assumption). Then, by \cite[Corollary 1]{diagonalisation}, we know that for each $k \geq 1$, there is a non-singular $\pmod{p}$ matrix $U_k$ such that $T[U_k] \equiv R \pmod {p^{k}}$ with some diagonal matrix $R$. By then setting $t \longmapsto U_kt$, we can still consider $t_{i} \pmod{p^{k}}$ for all $i$ (as the determinant of $U_k$ is non-zero $\pmod{p}$). We then have $N(T;p^{k}) = N(R; p^{k})$, with $R$ diagonal. We can now count the number of solutions $N(R;p^{k})$ by \cite[Proposition 4]{congruences}. In particular, if $R = \textup{diag}(a_1,a_2)$, we have
    \begin{equation*}
        N(R;p^{k}) = p^{k}\left[1 - \frac{1}{p}\left(\frac{-a_1a_2}{p}\right)\right].
    \end{equation*}
    % because $c_1(-1)=1, c_p(-1)=-1, c_{p^{l}}(-1)=0$ for $l \geq 2$ in the notation there.
    But $\det{R}=a_1a_2$, and $\det{R} \equiv (\det{U_k})^{2}\det{T} \pmod{p}$, so
    \begin{equation*}
        \left(\frac{-a_1a_2}{p}\right) = \left(\frac{-\det{T}}{p}\right) = \left(\frac{-3u^{2}}{p}\right) = \left(\frac{-3}{p}\right) = \chi_{S}(p),
    \end{equation*}
    where $2u\equiv 1 \pmod{p}$. Therefore, we obtain
    \begin{equation*}
        n(\xi;p^{k}) = N(R;p^{k}) = p^{k}\left[1-\frac{\chi_{S}(p)}{p}\right].
    \end{equation*}
    \item $S = \m{2&-1 \\ -1 & 8}$. In this case, we have $PSQ = \textup{diag}(1, 15)$ with $P = \m{1&1\\1&2}$ and specified $Q$. By following the above steps, the congruence becomes
    \begin{equation*}
        15t_1^2-15t_1t_2+4t_2^2 \equiv -15 \pmod{15d}
    \end{equation*}
    with $t_1 \pmod{d}$ and $t_2 \pmod{15d}$. But now $15 \mid t_2$ and so after $t_2 \longmapsto 15t_2$, we get
    \begin{equation*}
        t_1^2-15t_1t_2+60t_2^2 \equiv -1 \pmod{d}
    \end{equation*}
    with $t_1,t_2 \pmod{d}$. With exact same reasoning as before, we get
    \begin{equation*}
        n(\xi;p^{k}) = p^{k}\left[1-\frac{\chi_{S}(p)}{p}\right],
    \end{equation*}
    for all $p \notin \mathcal{P}$ with $n(\xi;d)$ multiplicative in $d$.
    \vspace{0.1cm}
    \item $S = \m{2 & -1 & -1 & -1 \\ -1 & 2 & 1 & 0\\-1 & 1 & 2 & 0\\-1&0&0&2}$. In this case, we have $PSQ = \textup{diag}(1,1,1,5)$ with
    \vspace{-0.1cm}
    \begin{equation*}
        P = \m{3&1&1&3\\1&1&0&1\\1&0&1&1\\3&1&1&4}.
    \end{equation*}
    By then substituting $s = P^{-1}t$ the congruence becomes
    \begin{equation*}
        5t_1^{2}+5t_2^{2}+5t_3^{2}+2t_4^{2}-5t_1t_2-5t_1t_3-5t_1t_4+5t_2t_3 \equiv -5 \pmod{5d},
    \end{equation*}
    with $t_1,t_2,t_3 \pmod{d}$ and $t_{4} \pmod{5d}$. But, again, $5\mid t_4$, so after setting $t_4 \longmapsto 5t_4$, we have
    \begin{equation*}
        t_1^{2}+t_2^{2}+t_3^{2}+10t_4^{2}-t_1t_2-t_1t_3-5t_1t_4+t_2t_3 \equiv -1 \pmod{d},
    \end{equation*}
    with $t_i \pmod{d}$ for all $i$. As before, this then gives
    \begin{equation*}
        n(\xi;p^{k}) = p^{3k}\left[1-\frac{\chi_{S}(p)}{p^{2}}\right],
    \end{equation*}
    for all $p \notin \mathcal{P}$ with $n(\xi;d)$ multiplicative in $d$.
    \vspace{0.1cm}
    \item $S = \m{2&-1&0&0\\-1&2&0&0\\0&0&2&-1\\0&0&-1&2}$. In this case, using the same method, we obtain
    \begin{equation*}
        3t_1^2+12t_2^2+t_3^2+t_4^2-9t_1t_2+3t_1t_3-6t_2t_3 + 3t_2t_4 \equiv -3 \pmod{3d},
    \end{equation*}
    with $t_1,t_2 \pmod {d}$ and $t_3,t_4 \pmod{3d}$. This then implies $3 \mid t_3^2+t_4^2$ and so $3 \mid t_3,t_4$. Therefore, as previously, we obtain
    \begin{equation*}
        n(\xi;p^{k}) = p^{3k}\left[1-\frac{\chi_{S}(p)}{p^{2}}\right],
    \end{equation*}
    for all $p \notin \mathcal{P}$ with $n(\xi;d)$ multiplicative in $d$.
    \item In the last two cases, we omit the calculations, but by applying the exact same reasoning as above, we will again get
    \begin{equation*}
        n(\xi;p^{k}) = p^{k(n-1)}\left[1-\frac{\chi_{S}(p)}{p^{n/2}}\right],
    \end{equation*}
    for all $p \not \in \mathcal{P}$ and with $n(\xi;d)$ multiplicative in $d$. Here $n=6, 8$.
\end{enumerate}
In all of the above cases, we therefore obtain (here $\mathcal{P}$ can be possibly enlarged but still finite)
\begin{equation}\label{zeta_xi even}
    \zeta_{\xi, \mathcal{P}}(s) = \prod_{p \notin \mathcal{P}} \zeta_{\mathcal{P}}(s-(n-1))\zeta_{\mathcal{P}}(s-n/2+1, \chi_{S})^{-1},
\end{equation}
where $\zeta_{\mathcal{P}}(s, \chi_S) := \prod_{p \not \in \mathcal{P}}(1-\chi_S(p)p^{-s})^{-1}$.
We therefore arrive at the following Theorem.
\begin{theorem}\label{even rank theorem}
    Let $S$ denote any of the matrices of Section \ref{even rank} with even rank $n$ and $\mathcal{P}$ a finite set of primes, containing the primes described in the proof of Theorem \ref{dirichlet}, the prime $2$, and the primes so that the conditions of \cite[Proposition 5.13]{shimura_orthogonal} are satisfied for all $p \not \in \mathcal{P}$. For all $p \not \in \mathcal{P}$, we define 
    \begin{equation*}
    \chi_{S}(p):=\left(\frac{(-1)^{n/2}\det{S}}{p}\right).
    \end{equation*}
    If $F$ and $\mathcal{P}_{k,-q,r}$ are as in Theorem \ref{dirichlet} and $\xi = (1,\mathbf{0},1)^{t}$ (in particular $A(\xi) \neq 0$), we have
    \begin{multline*}
        \mathcal{D}_{F,\mathcal{P}_{k, -q, r},\mathcal{P}}(s) = A(\xi)\zeta_{\mathcal{P}}(2s-2k+n+2)^{-1}\zeta_{\mathcal{P}}\left(s-k+(n+4)/2,\chi_{S}\right)^{-1}\times \\\times L_{\mathcal{P}}\left(F;s-k+(n+2)/2\right)\prod_{i=1}^{n-1}\zeta_{\mathcal{P}}(s-k+(n+2)-i)^{-1}.
    \end{multline*}
\end{theorem}
\begin{proof}
    The proof follows from Theorem $\ref{dirichlet}$ after we choose $f$ to be the constant $\textbf{1}$ on $H(\xi)_{\mathbb{A}}$. From \cite[Proposition 5.15]{shimura_orthogonal}, we have that (since $\mathcal{P}$ is chosen so that conditions of \cite[Proposition 5.13]{shimura_orthogonal} are satisfied):
        \begin{equation*}
            L_{\mathcal{P}}\left(\mathbf{1}, s-k+(n+3)/2\right) = \prod_{i=1}^{n}\zeta_{\mathcal{P}}(s-k+(n+2)-i),
        \end{equation*}
    as in general $L_{\textup{Sug}}\left(\textbf{1}; s-(n-1)/2\right) = L_{\textup{Shi}}(s)$. Here, the notation means the $L$-function we encounter in \cite{sugano} and \cite{shimura_orthogonal} respectively, as these are normalised differently. Finally, the computation of $\zeta_{\xi, \mathcal{P}}(s)$ has been performed in \eqref{zeta_xi even}.
\end{proof}
\renewcommand{\abstractname}{Acknowledgements}
\begin{abstract}
The author would like to thank Prof. Thanasis Bouganis for suggesting this problem as well as for his continuous guidance and support. The author would also like to thank the anonymous referees for their useful comments and suggestions.
\end{abstract}
\vspace{0.5cm}
\textbf{Funding}: This work was supported by the HIMR/UKRI "Additional Funding Programme for Mathematical Sciences", grant number EP/V521917/1, as well as by a scholarship from the Onassis Foundation.\\

% \textbf{Conflicts of interest/Competing interests}: The author has no conflicts of interest to declare that are relevant to the content of this article.\\

% \textbf{Data Availability}: Data sharing not applicable to this article as no datasets were generated or analysed during the current study.
\vspace{-0.2cm}
\bibliographystyle{plain}
\bibliography{reference}

\begin{thebibliography}{10}

\bibitem{andrianov}
A.~N. Andrianov.
\newblock {Euler Products corresponding to Siegel modular forms of genus 2}.
\newblock {\em Russian Mathematical Surveys}, \textbf{29}(\textbf{3}):45--116, 1974.

\bibitem{bouganis_psyroukis}
T.~Bouganis and R.~Psyroukis.
\newblock {On a Rankin-Selberg integral of three Hermitian cusp forms}.
\newblock \url{https://arxiv.org/abs/2309.17237}.
\newblock To Appear in \textit{Annales mathématiques du Québec}.

\bibitem{dummigan}
N.~Dummigan, B.~Heim, and A.~Rendina.
\newblock Kurokawa--{M}izumoto congruences and degree-{8} ${L}$-values.
\newblock {\em manuscripta mathematica}, \textbf{160}(\textbf{1}):217--237, 2019.

\bibitem{paramodular_hecke}
J.~Gallenk\"{a}mper and A.~Krieg.
\newblock {The Hecke algebras for the orthogonal group SO(2,3) and the paramodular group of degree 2}.
\newblock {\em International Journal of Number Theory}, \textbf{14}(\textbf{09}):2409--2423, 2018.

\bibitem{gritsenko_dirichlet_orthogonal}
V.~A. Gritsenko.
\newblock {Dirichlet series with Euler product in the theory of modular forms with respect to the orthogonal groups.}
\newblock \textbf{E-11-87}.
\newblock Preprint LOMI, 1987.

\bibitem{gritsenko2}
V.~A. Gritsenko.
\newblock {Zeta function of degree six for Hermite modular forms of genus two}.
\newblock {\em J Math Sci}, \textbf{43}:2540--2553, 1988.

\bibitem{gritsenko_jacobi}
V.~A. Gritsenko.
\newblock {Fourier-Jacobi functions of $n$.}
\newblock {\em J Math Sci}, \textbf{53}:243--252, 1991.

\bibitem{gritsenko}
V.~A. Gritsenko.
\newblock {Jacobi functions and Euler products for Hermitian modular forms.}
\newblock {\em J Math Sci}, \textbf{63}:2883--2914, 1992.

\bibitem{diagonalisation}
A.~Hakami.
\newblock {A Note on Diagonalization of Integral Quadratic Forms Modulo $p^m$}.
\newblock {\em Notes on Number Theory and Discrete Mathematics}, \textbf{17}:30--36, 2011.

\bibitem{hanke2011enumerating}
J.~Hanke.
\newblock {Enumerating maximal definite quadratic forms of bounded class number over $\mathbb{Z}$ in $n \geq 3$ variables}.
\newblock \url{https://arxiv.org/abs/1110.1876}.
\newblock Preprint, 2011.

\bibitem{hauffe21}
A.~Hauffe-Waschbüsch.
\newblock {V}erschiedene {A}spekte von {M}odulformen in mehreren {V}ariablen.
\newblock \url{https://publications.rwth-aachen.de/record/824384}.
\newblock Dissertation, RWTH Aachen University, 2021.

\bibitem{heim}
B.~Heim.
\newblock Pullbacks of {E}isenstein {S}eries, {H}ecke-{J}acobi {T}heory and {A}utomorphic ${L}$-{F}unctions.
\newblock {\em Automorphic Forms, Automorphic Representations, and Arithmetic, in: Proceedings of Symposia in Pure Mathematics, Part $2$}, \textbf{66}:201--238, 1999.

\bibitem{kohnen_skoruppa}
W.~Kohnen and N.~P. Skoruppa.
\newblock {A certain Dirichlet series attached to Siegel modular forms of degree two.}
\newblock {\em Invent Math}, \textbf{95}:541–558, 1989.

\bibitem{krieg_jacobi}
A.~Krieg.
\newblock {Jacobi Forms of Several Variables and the Maaß Space}.
\newblock {\em Journal of Number Theory}, \textbf{56}(\textbf{2}):242--255, 1996.

\bibitem{krieg_integral_orthogonal}
A.~Krieg.
\newblock Integral orthogonal groups.
\newblock In {\em Dynamical Systems, Number Theory and Applications: A Festschrift in Honor of Armin Leutbecher's 80th Birthday}, pages 177--195. World Scientific, 2016.

\bibitem{manickam}
M.~Manickam.
\newblock On the first {F}ourier-{J}acobi coefficient of {S}iegel modular forms of degree two.
\newblock {\em Journal of Number Theory}, \textbf{219}:404--411, 2021.

\bibitem{jacobi_lattice}
A.~Mocanu.
\newblock {On Jacobi forms of lattice index}.
\newblock \url{http://eprints.nottingham.ac.uk/56893/}.
\newblock PhD Thesis, University of Nottingham, 2019.

\bibitem{pollack_shah}
A.~Pollack and S.~Shah.
\newblock {On the Rankin–Selberg integral of Kohnen and Skoruppa}.
\newblock {\em Mathematical Research Letters}, \textbf{24}(\textbf{1}):173–222, 2017.

\bibitem{pollack_shah_2}
A.~Pollack and S.~Shah.
\newblock Multivariate {R}ankin–{S}elberg {I}ntegrals on $\textup{GL}_4$ and $\textup{GU}(2, 2)$.
\newblock {\em Canadian Mathematical Bulletin}, \textbf{61}(\textbf{4}):822–835, 2018.

\bibitem{orthogonal_analytic}
R.~Psyroukis.
\newblock {Analytic Properties of an orthogonal Fourier-Jacobi Dirichlet Series}.
\newblock \url{https://arxiv.org/abs/2411.15956}.
\newblock Preprint, 2024.

\bibitem{eisenstein_thesis}
F.~Schaps.
\newblock {{E}isenstein series for the orthogonal group $\textup{O}(2,n)$}.
\newblock \url{https://publications.rwth-aachen.de/record/853111}.
\newblock Dissertation, RWTH Aachen University, 2022.

\bibitem{Shimura_Euler_Product}
G.~Shimura.
\newblock {\em \textup{{Euler products and {E}isenstein series}}}, volume \textbf{93} of {\em CBMS Regional Conference Series in Mathematics}.
\newblock 1997.

\bibitem{shimura_orthogonal}
G.~Shimura.
\newblock {An exact mass formula for orthogonal groups}.
\newblock {\em Duke Mathematical Journal}, \textbf{97}(\textbf{1}):1 -- 66, 1999.

\bibitem{shimura2004arithmetic}
G.~Shimura.
\newblock {\em \textup{{Arithmetic and Analytic Theories of Quadratic Forms and Clifford Groups}}}.
\newblock Mathematical surveys and monographs. American Mathematical Society, 2004.

\bibitem{shimura_classification_quadratic_forms}
G.~Shimura.
\newblock {Classification, Construction, and Similitudes of Quadratic Forms}.
\newblock {\em American Journal of Mathematics}, \textbf{128}(\textbf{6}):1521--1552, 2006.

\bibitem{shimura_diophantine_2006}
G.~Shimura.
\newblock {Integer-valued quadratic forms and quadratic Diophantine equations.}
\newblock {\em Doc. Math.}, \textbf{11}:333--367, 2006.

\bibitem{sugano}
T.~Sugano.
\newblock {On Dirichlet Series Attached to Holomorphic Cusp Forms on $\hbox{SO}(2,q)$}.
\newblock {\em Adv. Stud. Pure Math.}, \textbf{62}:333--362, 1985.

\bibitem{sugano_lifting}
T.~Sugano.
\newblock {Jacobi Forms and the Theta Lifting}.
\newblock {\em Comment. Math. Univ. St. Paul}, \textbf{44}(\textbf{1}):1--58, 1995.

\bibitem{congruences}
L.~Tóth.
\newblock {Counting Solutions of Quadratic Congruences in Several Variables Revisited}.
\newblock {\em Journal of Integer Sequences}, \textbf{17}:Article 14.11.6, 2014.

\end{thebibliography}
\end{document}